\documentclass[11pt,letterpaper]{amsart}
\usepackage[T1]{fontenc}
\usepackage{amsmath,amssymb,amsfonts,mathtools}
\usepackage{microtype}
\usepackage{enumitem}
\usepackage{booktabs,array}
\newcolumntype{P}[1]{>{\raggedright\arraybackslash}p{#1}}
\usepackage[hidelinks]{hyperref}

\numberwithin{equation}{section}
\allowdisplaybreaks
\newtheorem{theorem}{Theorem}[section]
\newtheorem{maintheorem}[theorem]{Main theorem}
\newtheorem{lemma}[theorem]{Lemma}
\newtheorem{proposition}[theorem]{Proposition}
\newtheorem{corollary}[theorem]{Corollary}
\newtheorem{problem}[theorem]{Problem}

\theoremstyle{definition}
\newtheorem{definition}[theorem]{Definition}
\newtheorem{setup}[theorem]{Setup}

\DeclareMathOperator{\Stab}{Stab}
\newcommand{\Z}{\mathbb Z}
\newcommand{\F}{\mathcal F}
\newcommand{\G}{\mathcal G}
\newcommand{\Hh}{\mathcal H}
\newcommand{\1}{\mathbf 1}

\title[One extra edge forces Berge pancyclicity]{One extra edge forces Berge pancyclicity}
\author{Henry Shin}
\date{}

\subjclass[2020]{Primary 05C65; Secondary 05C35, 05C38, 05D05, 11B30}
\keywords{Berge cycle, pancyclic hypergraph, Hamiltonian hypergraph, cyclic distance, matching exchange, two-gap cycle, sum-free stability}

\hypersetup{
  pdftitle={One extra edge forces Berge pancyclicity},
  pdfauthor={Henry Shin},
  pdfsubject={Resolution of the Bailey--Hollars--Li--Luo one-extra-edge question},
  pdfkeywords={Berge cycle, pancyclic hypergraph, Hamiltonian hypergraph, matching exchange, sum-free stability}
}

\begin{document}

\begin{abstract}
We resolve a question of Bailey, Hollars, Li and Luo.  For sufficiently
large $n$, set $r=\lfloor(n-1)/2\rfloor$.  We prove that the edges of any
Hamiltonian Berge cycle in a simple $n$-vertex $r$-uniform hypergraph,
together with any one additional edge, contain Berge cycles of every length
from $2$ to $n$.  In odd order we prove a stronger prescribed-unused-edge
theorem using rigidity of large subsets of odd cyclic groups and an
alternating matching exchange.  In even order we introduce a two-gap
edge-reassignment method.  Counting, additive structure, and split-lock
arguments cover all lengths outside a seven-term middle band.  The absence
of the central length forces an exact reflected translation-wave structure,
which is eliminated by an additive covering theorem derived from
sum-free stability.  The remaining
near-central lengths follow from a two-defect recurrence and bounded-run
forcing.
\end{abstract}

\maketitle

\section{Introduction}

All hypergraphs in this paper are finite and simple, meaning that their
edge families consist of distinct subsets of the vertex set; repeated
hyperedges are not allowed.  A \emph{Berge cycle} of length $\ell\ge2$
in a hypergraph $\Hh$ is an alternating sequence
\[
 x_0h_0x_1h_1\cdots x_{\ell-1}h_{\ell-1}x_0
\]
in which the vertices $x_0,\ldots,x_{\ell-1}$ are distinct, the hyperedges
$h_0,\ldots,h_{\ell-1}$ are distinct, and
$\{x_i,x_{i+1}\}\subseteq h_i$ for every $i$, with indices modulo $\ell$.
For such a cycle $C$, write
\[
        E(C):=\{h_0,\ldots,h_{\ell-1}\}.
\]
A Berge cycle is \emph{Hamiltonian} if it contains every vertex.  An
$n$-vertex hypergraph is \emph{Berge-pancyclic} if it contains Berge cycles
of every length $2,3,\ldots,n$.  Thus length $2$ is included: a Berge
$2$-cycle consists of two distinct vertices and two distinct hyperedges
containing both of them.  This is the convention of Bailey--Hollars--Li--Luo
\cite{BHLL}; some earlier work uses lengths $3,\ldots,n$ instead
\cite{BLL}.  For a positive integer $n$, we write
$\Z_n:=\mathbb Z/n\mathbb Z$.

Berge cycles were introduced by Berge~\cite{Berge}, and early
Hamiltonicity results for this notion go back to Bermond, Germa,
Heydemann and Sotteau~\cite{BermondEtAl}.  Dirac's Hamiltonicity theorem
and Bondy's pancyclic strengthening provide the classical graph-theoretic
paradigm~\cite{Dirac,Bondy}.  For Berge cycles, fixed-length and
consecutive-length extremal questions were studied by Gy\H{o}ri and
Lemons~\cite{GyoriLemons} and by Jiang and Ma~\cite{JiangMa}.

Sharp results for long cycles include work of F\"uredi, Kostochka and
Luo~\cite{FKL-long}; Dirac-type Hamiltonicity was developed in the
non-uniform setting by the same authors~\cite{FKL-nonuniform} and in the
uniform setting by Kostochka, Luo and McCourt~\cite{KLM}.
Super-pancyclicity was studied by Kostochka, Luo and
Zirlin~\cite{KLZ}, while Salia obtained P\'osa-type degree
conditions~\cite{Salia}.  Bailey,
Li and Luo, and then Bailey, Hollars, Li and Luo, obtained sharp
Dirac-type pancyclicity results across broad ranges of uniformities
~\cite{BLL,BHLL}.  At the remaining half-uniformity borderline, Bailey,
Hollars, Li and Luo asked the following question~\cite[Question~1]{BHLL}.

\begin{problem}[Bailey--Hollars--Li--Luo]\label{prob:BHLL}
Let $n$ be sufficiently large and put
\[
        r=\left\lfloor\frac{n-1}{2}\right\rfloor.
\]
Suppose that an $n$-vertex $r$-uniform hypergraph contains a Hamiltonian
Berge cycle $C$ and at least one edge outside $E(C)$.  Must the hypergraph
be Berge-pancyclic?
\end{problem}

Our main theorem answers this question affirmatively.

\begin{maintheorem}[Resolution of the one-extra-edge question]\label{thm:main}
There exists $n_0$ such that the following holds for every $n\ge n_0$.
Put
\[
        r=\left\lfloor\frac{n-1}{2}\right\rfloor.
\]
If a simple $n$-vertex $r$-uniform hypergraph $\Hh$ contains a Hamiltonian
Berge cycle $C$ and an edge $f\notin E(C)$, then the subhypergraph with edge
set $E(C)\cup\{f\}$ is Berge-pancyclic.
\end{maintheorem}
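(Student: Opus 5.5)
We work entirely inside the subhypergraph with edge set $E(C)\cup\{f\}$, which has exactly $n+1$ edges. Write the Hamiltonian cycle as $C=x_0h_0x_1h_1\cdots x_{n-1}h_{n-1}x_0$ with indices in $\Z_n$. The basic object is a \emph{chord cycle}: pick two vertices $x_a,x_b\in f$, and use $f$ to jump from $x_a$ to $x_b$. Then follow one arc of $C$ back to $x_a$. This gives a Berge cycle whose length is the cyclic distance $b-a$ plus one. Because $|f|=r\approx n/2$, the differences of $f$ viewed as a subset of $\Z_n$ are plentiful, but they need not cover every residue. The gaps will be filled by \emph{edge reassignment}. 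Each $h_i$ is an $r$-set containing $x_i,x_{i+1}$ and roughly $n/2-2$ further vertices. So a short stretch $x_i h_i\cdots x_j$ can often be replaced by $x_i h x_j$ for some $h\in\{h_i,\dots,h_{j-1}\}$ containing both endpoints; this shortens the cycle by a prescribed amount while freeing edges for reuse. The proof splits by the parity of $n$.

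\emph{Odd $n$.} Here $r=(n-1)/2$. I would prove the stronger statement that for every length $\ell$ there is an $\ell$-cycle avoiding any prescribed edge of $E(C)\cup\{f\}$. The steps are as follows.
\begin{enumerate}[label=(\roman*)]
\item Short lengths $2,3$ and lengths close to $n$ come directly. Pigeonhole on the $n+1$ edges of size $r$ gives two edges sharing a pair, hence a $2$-cycle. Single chords of $f$ give lengths near $n$.
\item For general $\ell$, consider the set $D_\ell$ of indices $i$ with $x_i,x_{i+\ell-1}$ lying in a common edge outside the arc between them. If no $\ell$-cycle exists, every $D_\ell$-type configuration is blocked.
\item Counting incidences, each $h_i$ is a subset of $\Z_n$ of size $(n-1)/2$. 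Blocking forces $h_i$ and its translate $h_i+(\ell-1)$ to be nearly disjoint. A subset of $\Z_n$ ($n$ odd) of size $(n-1)/2$ that is nearly disjoint from a translate must be close to an arithmetic progression of difference $\ell-1$, with rigidity coming from Kneser/Vosper-type results in odd cyclic groups.
\item In that rigid structure, an alternating exchange along a matching of chords (swapping which of $h_i,h_{i+1}$ covers a pair) produces the required cycle, or contradicts simplicity because two edges coincide.
\end{enumerate}

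\emph{Even $n$.} Here $r=n/2-1$, and translation rigidity is weaker. The steps are as follows.
\begin{enumerate}[label=(\roman*)]
\item Use a two-gap reassignment: delete two disjoint stretches of $C$ and reconnect each with a single reassigned edge (one of them possibly $f$). The possible total lengths then form a sumset.
\item Counting and additive (Cauchy--Davenport/Kneser) estimates cover all $\ell$ except a bounded band around $n/2$.
\item For the central length $\ell=n/2+1$, assume failure. Each $h_i$ then must avoid a reflected copy of translates of the others. I would show this forces every $h_i$ to be an interval-like ``wave'' and forces $f$ to be sum-free-like in $\Z_n$.
\item Sum-free stability says a sum-free set of density near $1/2$ is essentially the odd residues or an interval. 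This yields an explicit configuration in which one checks directly that the central cycle exists, a contradiction.
\item Near-central lengths follow by perturbing the central cycle by $\pm1,\pm2$ using local defects.
\end{enumerate}

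The main obstacle is the even-order middle band, especially the central length. There, naive counting gives exactly the threshold and no slack, so a genuine stability argument is needed. I expect most of the work to be extracting the exact translation structure from the failure assumption.
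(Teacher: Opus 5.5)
There is a genuine gap, and it sits mainly in the even middle band, where your outline either has no mechanism or points it at the wrong object.

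\textbf{Central length $m+1$.} The paper uses nothing about $f$ beyond antipodal-freeness (otherwise the chord lemma already gives $m+1$). The central cycle is then built from Hamiltonian labels alone, so sum-free structure of $f$ is beside the point. What failure actually forces comes from two-gap triggers on the signs with which each $e_p$ meets the antipodal pairs $\{p-k,p-k+m\}$. Every label is then a translate $p+(E\setminus R_p)$ of a single reflection-symmetric antipodal transversal $E$, with $|R_p|\le1$ (plus possibly one harmless extra offset when $m$ is odd). $E$ can have any shape, not just an interval-like one.

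The remaining step is quantitative. The two-gap bridge assignments available for $E$ must outnumber those destroyed by one adversarial deletion per row, i.e.\ $Q(E)-\Delta_{\mathrm{del}}(E)\ge cm^2$. Sum-free stability enters only here, applied to the folded upper representatives of $E$, whose Schur triples feed that count. It is used by contradiction: an $o(m^2)$ count would make this set $o(m)$-close to the parity or interval model. Those models carry $\Omega(m^3)$ templates, and a Lipschitz estimate transfers that mass back. ``Checking directly in an explicit configuration'' cannot replace this, since stability gives only approximate structure and the deletions must be absorbed by a surplus.

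\textbf{The other six middle lengths.} Your step (v) has no mechanism. The lengths $m-2,m-1,m,m+2,m+3,m+4$ are exactly those whose distance $s=m-j$, with $j\in\{1,2,3\}$, is too close to $m$ for the split-lock count $|G_p|+|H_p|\ge2m-2-s\ge m+2$. Nothing in a length-$(m+1)$ template lets you shift its length, because changing a gap changes the bridge pairs that must be covered. The paper needs a separate analysis of $f$ missing $m-j$. It uses the exact two-defect recurrence $\alpha_{x+2j}-\alpha_x=\delta_x-\delta_{x-s}$, short-run bounds, and forcing at a defect row. For $j=1$ it needs an explicit normal form whose interval case $f=[0,m-2]$ requires a separate simplicity argument.

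\textbf{Length $2m-1$.} Your step (ii) silently absorbs this length. Its distance $2$ lies outside the split-lock range, and it needs its own structural argument.

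\textbf{Odd order.} Your direction (rigidity plus alternating exchange) matches the paper's, but step (iii) fails as written. Your blocking condition constrains a used edge $h_i$ only on pairs whose short arc avoids label $i$. It therefore yields merely $|h_i\cap(h_i-(\ell-1))|\le\ell-1$, which is far from near-disjointness once $\ell$ is a constant fraction of $n$.

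The paper instead works with unused edges, where disjointness is exact. An $r$-subset of $\Z_{2r+1}$ missing distance $d$ is a maximum independent set of the $d$-step cycle. So $f$ misses at most one distance, and only that $d$ needs an exchange. For it, let $T$ be the set of edges that can be left unused by alternating paths in the bipartite graph of edges versus adjacent-pair positions. Then $|N(T)|=|T|-1$, and every member of $T$ misses $d$. Rigidity plus simplicity makes $T$ a family of distinct translates $A+I$. Hence $N(T)=I+Q$, where $Q\ne\varnothing$ is the set of adjacent pairs inside $A$, has size at least $|I|=|T|$, a contradiction. Approximate ``close to a progression'' structure would not give this exact count.

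Two smaller errors remain. Lengths near $n$ do not come from chords of $f$ directly, since $f$ may miss distance $2$. And your announced stronger statement is false at $\ell=n$: if $x_{i+1}$ lies in no edge of $E(C)\cup\{f\}$ other than $h_i$ and $h_{i+1}$, no Hamiltonian Berge cycle avoids $h_i$.
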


The conclusion is local to the chosen Hamiltonian cycle and the chosen
extra edge: the $n+1$ edges of $E(C)\cup\{f\}$ already contain Berge
cycles of every length $2,\ldots,n$.

The two parities require different mechanisms.  Odd order admits the
following stronger theorem.  Relative to a displayed Hamiltonian cyclic
order, a hyperedge $h$ \emph{realizes cyclic distance} $d$ if it contains a
pair $\{v_i,v_{i+d}\}$ for some $i$.

\begin{theorem}[Prescribed leftover chords in odd order]\label{thm:odd-prescribed}
Let $n=2r+1\ge5$.  Let $\Hh$ be a simple $n$-vertex hypergraph containing a
Hamiltonian Berge cycle
\[
        C=v_0e_0v_1e_1\cdots v_{n-1}e_{n-1}v_0.
\]
Let $\G\subseteq E(\Hh)$ satisfy $E(C)\subseteq\G$, and suppose that every
hyperedge in $\G$ has size at least $r$.  If
$D\subseteq\{2,\ldots,r\}$ and $|\G|\ge n+|D|$, then there is a Hamiltonian
Berge cycle $C_D$ with the same vertex sequence and edge set contained in
$\G$, together with distinct hyperedges
$g_d\in\G\setminus E(C_D)$, one for each $d\in D$, such that $g_d$ realizes
cyclic distance $d$.
\end{theorem}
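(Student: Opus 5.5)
The plan is to recast the statement as a system-of-distinct-representatives problem and verify Hall's condition using the rigidity of large subsets of $\Z_n$, $n=2r+1$.

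\textbf{Setup.} Identify $V$ with $\Z_n$ via $v_i\mapsto i$. Introduce two kinds of demands:
\begin{itemize}[leftmargin=*]
\item \emph{position demands} $p_i$, for $i\in\Z_n$, which can be served by any $g\in\G$ with $\{v_i,v_{i+1}\}\subseteq g$;
\item \emph{distance demands} $d\in D$, which can be served by any $g\in\G$ realizing cyclic distance $d$.
\end{itemize}
A matching saturating all $n+|D|$ demands gives exactly what the theorem asks for. The edges assigned to the positions form a Hamiltonian Berge cycle $C_D$ with the same vertex sequence. The edges $g_d$ assigned to the distances are distinct and lie outside $E(C_D)$. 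So by Hall's theorem it suffices to show $|N(S)|\ge|S|$ for every demand set $S=P\cup T$, where $P$ is a set of positions and $T\subseteq D$. The matching could also be produced constructively by alternating-path exchanges starting from $C$, but Hall is the cleanest way to organize it.

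\textbf{Rigidity lemma.} Let $A\subseteq\Z_n$ with $|A|\ge r$, and suppose $A$ avoids distance $d$, i.e.\ $A\cap(A+d)=\emptyset$. Let $g=\gcd(d,n)$. Translation by $d$ splits $\Z_n$ into $g$ cycles, each of odd length $n/g$. On each such cycle $A$ is an independent set, so it has at most $(n/g-1)/2$ elements there. Summing over the $g$ cycles gives $|A|\le (n-g)/2\le r$. Hence:
\begin{itemize}[leftmargin=*]
\item $g=1$ and $|A|=r$;
\item $A$ is an arithmetic progression $\{a,a+2d,\dots,a+2(r-1)d\}$.
\end{itemize}

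Two consequences follow.
\begin{itemize}[leftmargin=*]
\item Such a set has consecutive differences $2d$. Since $d\not\equiv\pm d'$ for distinct $d,d'\in\{2,\dots,r\}$, it determines $d$ up to sign. So \emph{a hyperedge of size $\ge r$ avoids at most one distance of $D$.}
\item An AP of difference $2d$ and length $r$ realizes every distance $\pm 2dk$ with $1\le k\le r-1$. These are all the nonzero residues except $\pm d$. In particular it realizes distance $1$, so it contains consecutive pairs $\{v_i,v_{i+1}\}$, and there are many of them (roughly $r-1$ or $r$, depending on where $2dk\equiv\pm1$ falls).
\end{itemize}

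\textbf{Hall's condition, case by case.}
\begin{itemize}[leftmargin=*]
\item $T=\emptyset$. Then $e_i\in N(p_i)$, so $|N(S)|\ge|P|$.
\item $|T|\ge2$. By the lemma every $g\in\G$ realizes some $d\in T$, so $N(S)=\G$ and $|N(S)|=|\G|\ge n+|D|\ge|S|$.
\item $T=\{d\}$. We need $|N(S)|\ge|P|+1$. The edges $e_i$ with $i\in P$ give $|P|$ neighbours. Suppose for contradiction there is no further neighbour. Then every edge outside $\{e_i:i\in P\}$ avoids $d$, hence is an AP of difference $2d$ by the lemma. There are at least $n-|P|+|D|\ge n-|P|+1$ such edges: the cycle edges $e_j$ with $j\notin P$, together with at least $|D|\ge1$ edges of $\G\setminus E(C)$. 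None of them may contain a pair $\{v_i,v_{i+1}\}$ with $i\in P$. So every one of these outside edges is an AP of difference $2d$ whose consecutive pairs all lie in the complement $\Z_n\setminus P$.
  \begin{itemize}[leftmargin=*]
  \item If $P=\Z_n$, this is immediately impossible, since each such AP realizes distance $1$.
  \item In general, an AP of difference $2d$ is determined by its starting point, so there are at most $n$ such sets. Its consecutive pairs form a nearly full arithmetic pattern of positions, so forcing them into the complement of $P$ should leave very few admissible translates. Moreover the edges are distinct, and each $e_j$ with $j\notin P$ must additionally contain $\{v_j,v_{j+1}\}$.
  \end{itemize}
  The goal is to show that fewer than $n-|P|+1$ distinct admissible sets exist.
\end{itemize}

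\textbf{Main obstacle.} The hardest step is the $|T|=1$ case with $P$ a proper nonempty subset. There one must count precisely how many translates $a+2d\{0,\dots,r-1\}$ both contain a prescribed pair $\{v_j,v_{j+1}\}$ and have all their consecutive pairs inside $\Z_n\setminus P$, and compare this count with $n-|P|+1$.

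I would first try to show that the consecutive-pair positions of such an AP form an interval of length about $r$ in the ordering of $\Z_n$ by multiples of $2d$. Then avoiding $P$ forces the admissible translates into a "window" of size at most $n-|P|$. The $|D|\ge1$ extra edges then overflow that window, giving the contradiction.

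If this direct count proves too delicate, the fallback is constructive. Run an alternating matching exchange: take a non-$d$-realizing extra edge, swap it into the cycle at a consecutive pair it covers, and release a cycle edge that realizes $d$. Iterate along an alternating path, and use the rigid AP structure to guarantee that the path cannot close up without success.
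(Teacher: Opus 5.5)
\textbf{Gap: the case $T=\{d\}$ is not proved.} Your reformulation via Hall's theorem is sound, and you handle $T=\varnothing$ and $|T|\ge2$ correctly. For $|T|\ge2$, the rigidity lemma shows that a set of size at least $r$ misses at most one cyclic distance, so $N(T)=\G$. The trouble is the case $T=\{d\}$, which is where all the content lies. You correctly reduce it to showing that the at least $n-|P|+1$ hyperedges outside $\{e_i:i\in P\}$ cannot all miss $d$ while covering no position of $P$. From there, however, you only offer a window-counting plan (``should leave very few admissible translates'') and a constructive fallback. As written, the proposal does not prove the theorem.

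\textbf{The missing step.} It is a one-line injection, and no precise count of covered positions is needed.
\begin{itemize}
\item By your lemma, each such hyperedge, viewed in $\Z_n$, equals $A_0+t$ for the fixed set $A_0=\{0,2d,\ldots,2(r-1)d\}$.
\item By simplicity, distinct hyperedges give distinct parameters $t$. These form a set $I$ with $|I|\ge n-|P|+1$.
\item Let $Q_0=\{i:\{i,i+1\}\subseteq A_0\}$. It is nonempty, because $A_0$ misses only the distance $d\ge2$ and therefore realizes distance $1$.
\item Fix any $q\in Q_0$. Then $A_0+t$ covers position $t+q$, which must lie outside $P$. So $t\mapsto t+q$ injects $I$ into $\Z_n\setminus P$, giving $|I|\le n-|P|$, a contradiction.
\end{itemize}

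\textbf{Comparison with the paper.} The injection above is exactly the paper's key inequality $|I+Q|\ge|I+q|=|I|$. The paper applies it to the subfamily reachable by alternating paths from an unmatched member, whose neighbourhood has exactly one fewer element than the subfamily. This handles one distance at a time, and the paper then inducts on $|D|$ by deleting the freed member. With the step added, your Hall version is a valid alternative that treats all of $D$ simultaneously rather than inductively. Your constructive fallback is essentially the paper's route.
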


\begin{corollary}[Odd one-extra-edge theorem]\label{cor:odd-one-extra}
Let $n=2r+1\ge7$.  If a simple $n$-vertex hypergraph contains a Hamiltonian
Berge cycle $C$ whose hyperedges have size at least $r$ and an additional
hyperedge $f\notin E(C)$ of size at least $r$, then the hyperedges in
$E(C)\cup\{f\}$ form a Berge-pancyclic hypergraph.  In particular, the odd-order case of
Problem~\ref{prob:BHLL} holds in every odd order $n\ge7$.
\end{corollary}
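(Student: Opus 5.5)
We derive Corollary~\ref{cor:odd-one-extra} from Theorem~\ref{thm:odd-prescribed} by applying it with $\G:=E(C)\cup\{f\}$, so that $|\G|=n+1$. Write $C=v_0e_0v_1\cdots v_{n-1}e_{n-1}v_0$. We then need three things: Berge cycles of length $n$, of length $2$, and of each length $\ell\in\{3,\ldots,n-1\}$.

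\textbf{Length $n$.} This is $C$ itself.

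\textbf{Length $\ell\in\{3,\ldots,n-1\}$.} We take a chord and shortcut the Hamiltonian cycle.
\begin{enumerate}[label=(\roman*)]
\item Given a Hamiltonian Berge cycle on $v_0,\dots,v_{n-1}$ with edge set $E'\subseteq\G$, and a hyperedge $g\in\G\setminus E'$ containing $\{v_i,v_{i+d}\}$ with $2\le d\le n-2$, we get two Berge cycles.
\item The first is $v_i\,e'_i\,v_{i+1}\cdots v_{i+d}\,g\,v_i$, of length $d+1$.
\item The second uses the complementary arc $v_{i+d}\cdots v_{i+n}=v_i$ closed by $g$, of length $n-d+1$.
\item In each, the vertices are distinct and the edges are distinct, since the arcs use distinct edges of $E'$ and $g\notin E'$.
\end{enumerate}
Since $\{v_i,v_{i+d}\}$ is also $\{v_{i+d},v_{i+d+(n-d)}\}$, cyclic distance $d$ and cyclic distance $n-d$ are the same condition. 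So a single $g$ realizing distance $d\in\{2,\ldots,r\}$ yields cycles of lengths $d+1$ and $n-d+1$.

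We apply Theorem~\ref{thm:odd-prescribed} separately for each $d\in\{2,\ldots,r\}$ with $D=\{d\}$. This is allowed because $|\G|=n+1=n+|D|$ and every hyperedge of $\G$ has size at least $r$. It gives $C_D$ and $g_d$, hence cycles of lengths $d+1$ and $n+1-d$.
\begin{itemize}
\item As $d$ ranges over $2,\ldots,r$, the lengths $d+1$ cover $3,\ldots,r+1$.
\item The lengths $n+1-d$ cover $r+2,\ldots,n-1$, because $n+1-r=r+2$.
\end{itemize}
So every length from $3$ to $n-1$ is obtained. We do not need the full strength of $|D|>1$ here.

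\textbf{Length $2$.} This is the step where the hypothesis $n\ge7$ should be used. We need two vertices lying in two distinct edges of $\G$.
\begin{itemize}
\item The edge $f$ has size at least $r\ge3$ and contains some vertex $v_j$. If $f$ contains $v_j$ together with $v_{j+1}$ or $v_{j-1}$, then $f$ and $e_j$ (respectively $e_{j-1}$) share that pair, and $v_j f v_{j\pm1}e\,v_j$ is a $2$-cycle.
\item If $f$ contains no two cyclically consecutive vertices, we instead use that the $e_i$ are large. Each $e_i$ has size at least $r=(n-1)/2$, so $e_i$ and $e_{i+1}$ both contain $v_{i+1}$.
\item By pigeonhole, the $n$ edges of $C$ have total size at least $nr=n(n-1)/2$. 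If no pair of vertices were covered by two distinct edges of $\G$, the pairs $\binom{h}{2}$ for $h\in\G$ would be disjoint. But $(n+1)\binom{r}{2}>\binom n2$ once $n\ge7$, since $(n+1)(r-1)r/2$ grows like $n^3/8$.
\item Hence some pair lies in two distinct edges, which is a Berge $2$-cycle.
\end{itemize}
For $n=7$ with $r=3$ we check directly that $8\cdot3=24>21$.

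The main obstacle is only this counting check at the smallest orders. The lengths $3,\ldots,n-1$ follow from the prescribed-chord theorem by the shortcut argument above.
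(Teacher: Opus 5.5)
Your proposal is correct and follows the paper's route: length $n$ from $C$, lengths $3,\dots,n-1$ from Theorem~\ref{thm:odd-prescribed} with $D=\{d\}$ plus the chord shortcut (Lemma~\ref{lem:chord}), and length $2$ from the pair-counting inequality $(n+1)\binom r2>\binom n2$ (Lemma~\ref{lem:two-cycle-odd}). Two tightenings are needed in the length-$2$ step. The middle bullet, about $e_i$ and $e_{i+1}$ sharing $v_{i+1}$, proves nothing and should be dropped. The ``grows like $n^3/8$'' remark together with the single check at $n=7$ should be replaced by the exact identity $(2r+2)\binom r2-r(2r+1)=r(r^2-2r-2)>0$, valid for all $r\ge3$.
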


For even order we prove the following asymptotic statement.

\begin{theorem}[Even one-extra-edge theorem]\label{thm:main-even}
There exists $m_0$ such that the following holds for every $m\ge m_0$.
Let $\Hh$ be a simple $(m-1)$-uniform hypergraph on $2m$ vertices, let $C$
be a Hamiltonian Berge cycle in $\Hh$, and let
$f\in E(\Hh)\setminus E(C)$.  Then the subhypergraph with edge set
$E(C)\cup\{f\}$ is Berge-pancyclic.
\end{theorem}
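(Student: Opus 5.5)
The plan is to follow the strategy sketched in the abstract, organized by the length $\ell$ of the desired Berge cycle. Write $n=2m$, $r=m-1$, and $C=v_0e_0v_1\cdots v_{n-1}e_{n-1}v_0$. Every edge of $E(C)\cup\{f\}$ has $m-1$ vertices, i.e.\ just under half of $V$.

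The basic construction is a chord-shortcut. A cycle of length $\ell$ arises from a chord $\{v_i,v_{i+\ell-1}\}$ contained in some edge $g$, combined with the path $v_{i+\ell-1}e_{i+\ell-1}\cdots$ around the long side. To make the edges distinct, one either takes $g\notin E(C)$ or reassigns the Hamiltonian path edges. In the even case there is no rigidity of $\Z_n$ as in Theorem~\ref{thm:odd-prescribed}. So instead of a single reassignment we will use \emph{two-gap} reassignments: we delete two arcs of $C$, and the edges freed by the deletion, together with $f$, are used to realize two chords that close a cycle of the prescribed length. The first step is an edge-reassignment lemma. It will state that if two disjoint chords of cyclic distances $a,b$ are realized by distinct freed edges, then there is a cycle of length $n-a-b+2$, and that the freed arcs' edges can be used freely.

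\textbf{Extreme lengths.} For small $\ell$ (say $\ell\le\varepsilon m$) and large $\ell$ (say $\ell\ge n-\varepsilon m$) we will use counting. Each edge has $\binom{m-1}{2}$ pairs, while there are only $n$ pairs at each cyclic distance. So for every $d$, a pigeonhole or averaging argument over the $n+1$ edges gives many edges realizing distance $d$. A split-lock argument then shows that the realizing edge can be chosen outside the arc used by the short side. Lengths $2$ and $3$ are handled directly, since two edges of size $m-1$ among $n+1$ edges must share pairs.

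\textbf{Middle range, away from the center.} For lengths $m-c\le\ell\le m+c$ outside a bounded band, we encode for each edge $h$ its set $D(h)\subseteq\Z_n$ of realized distances. Since $|h|=m-1$, the difference set $h-h$ covers nearly all of $\Z_n$ unless $h$ is close to an arithmetic progression, or close to a half (sum-free-like) set. Additive structure, i.e.\ Kneser/Freiman-type stability for sets of density nearly $1/2$, then gives the needed distances in all but boundedly many lengths.

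\textbf{Central band: the main obstacle.} The hardest part is the central length $\ell=m+1$ and its near neighbours, a seven-term band. Here the plan is to assume that length $m+1$ is absent and derive rigid structure. Every two-gap reassignment must fail, which should force each edge $e_i$ to be (up to $O(1)$ errors) a translate of a fixed near-half set $A$, reflected in alternate positions. This is the ``reflected translation-wave''. The extra edge $f$ must also fit the pattern. We then invoke sum-free stability: a set of size $m-1$ in $\Z_{2m}$ avoiding the needed sums is essentially the odd residues or an interval. From this, $f$ together with two consecutive wave edges covers a required distance, which contradicts the assumption; simplicity of $\Hh$ excludes $f$ equal to some $e_i$. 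Finally, the remaining near-central lengths $m+1\pm j$ ($j\le3$) follow by a recurrence: a cycle of length $\ell$ with two ``defects'' (gaps of length $2$) can be extended or shortened by one using bounded-run forcing along $C$. The exact-structure step is where I expect most of the difficulty and the need for $m\ge m_0$.
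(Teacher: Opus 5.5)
Your outline borrows the right vocabulary, but the mechanisms you attach to it would not carry the proof.

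\textbf{Lengths outside the middle band.} The averaging step is false as stated. It bounds the total number of (edge, distance) incidences, not the number of edges realizing a prescribed $d$. For $m$ odd, one can choose $f$ and all $e_i$ distinct among the $(m-1)$-sets that are maximum independent in both odd step-$2$ cycles and contain the required adjacent pairs. Then no edge realizes distance $2$, so no single chord gives length $2m-1$. The paper obtains $2m-1$ from crossing constructions instead (Lemmas~\ref{lem:s2-adjacent-lock}, \ref{lem:reversal-lock} and Theorem~\ref{thm:next-to-hamiltonian}). Likewise, the Kneser dichotomy cannot give ``the needed distances in all but boundedly many lengths'', since $f=\{0,2,\dots,2m-4\}$ misses every odd distance.

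The missing idea is how to produce $s+1$ and $2m-s+1$ when $f$ misses $s$. Take a row $p$ placed suitably relative to a pair of $f$ at distance $s+1$ (long length) or $s-1$ (short length). If the target is absent, two split locks forbid for $e_p$ the disjoint sets $G_p-(s-1)$ and $H_p+1$. Their total size is at least $2m-2-s\ge m+2$, which an $(m-1)$-set cannot avoid (Proposition~\ref{prop:bulk-split-pair}). Kneser is needed only to show that $f$ cannot miss two consecutive distances unless $m$ is odd, $d=(m-1)/2$ and $f=f+m$. That case needs the antipodal quotient of Theorem~\ref{thm:antipodal-two-block}.

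\textbf{Central length $m+1$.} The reflected-wave structure rests on a reduction you do not mention. An antipodal pair in $f$ or in any $e_p$ gives length $m+1$ at once. So each $e_p$ picks exactly one point from $m-3$ of its $m-2$ non-anchor antipodal pairs. This one-zero-per-row spin array, with the half-circle and reflected-column triggers, forces a single set $E=1-E$ with $e_p-p$ essentially $E\setminus R_p$ and $|R_p|\le1$. It does not give edges ``reflected in alternate positions''. The contradiction is then a count: the $2mQ(E)$ rotated two-gap templates, which use only Hamiltonian labels, exceed the at most $2m\Delta_{\mathrm{del}}(E)$ templates killed by deletions. Sum-free stability enters only to prove $P_N(J)\ge cH^2$ for the folded upper set $U\subseteq[H]$. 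Your endgame (stability for an $(m-1)$-set, then $f$ and two consecutive wave edges covering a distance) is unsupported and never absorbs the row deletions.

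\textbf{Near-central lengths $m+1\pm j$.} There is no principle for extending or shortening a Berge cycle by one, so these lengths cannot be reached that way. In the paper the \emph{two defects} are the two points of $\Z_{2m}\setminus(f\cup(f+s))$ when $f$ misses $s=m-j$. They yield $\alpha_{x+2j}-\alpha_x=\delta_x-\delta_{x-s}$ for the indicator of $f$. For $j=2,3$ this gives monochromatic runs of length at most $3$ or $4$, which makes all but $O(1)$ vertices forbidden for $e_p$ at a defect row. For $j=1$ an explicit normal form is needed, and its interval orbit is eliminated by swapping $f$ into position $p$ and using simplicity.
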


The parity obstruction is genuine.  In $\Z_{2m}$ the $(m-1)$-set
\[
        \{0,2,4,\ldots,2m-4\}
\]
misses every odd cyclic distance.  Thus the odd-order strategy of leaving a
single chord edge unused cannot extend directly.  The replacement is a
\emph{free-label two-gap construction}: two intervals of Hamiltonian labels
are freed simultaneously, and labels from their union are reassigned to
the two bridge pairs.  A label freed by one gap may therefore cover the
bridge across the other.

Table~\ref{tab:proof-map} summarizes the proof architecture.

\begin{table}[ht]
\centering
\small
\renewcommand{\arraystretch}{1.14}
\begin{tabular}{@{}P{0.31\linewidth}P{0.61\linewidth}@{}}
\toprule
Case & Main mechanism \\
\midrule
Odd order (Section~\ref{sec:odd-order})
& Cyclic-distance rigidity and alternating matching exchange. \\
Even endpoints and outside band (Section~\ref{sec:outside})
& Counting, Kneser's theorem, split locks, and an antipodal quotient. \\
Even central length $m+1$ (Sections~\ref{sec:central-length} and~\ref{sec:cover-surplus})
& Exact reflected-wave extraction and a cover surplus from sum-free stability. \\
Other six even middle lengths (Section~\ref{sec:six-middle})
& A two-defect recurrence and all-split forcing. \\
\bottomrule
\end{tabular}
\caption{The four components of the proof of Theorem~\ref{thm:main}.}
\label{tab:proof-map}
\end{table}

The central length $m+1$ is the main structural point in even order.  If no
such cycle exists, the additional edge and every Hamiltonian edge are
antipodal-free.  Encoding each
row by signs on antipodal pairs gives a spin array with exactly one zero in
every row.  Long-range two-gap triggers force every noncentral column to be
monochromatic, with opposite signs in reflected columns.  Hence the system
is an exact reflected translation wave with at most one deleted base offset
per row.

It remains to show that some two-gap template survives all these deletions.
After folding by the reflection symmetry, the number of available bridge
assignments becomes an additive correlation $P_N(J)$ on a near-transversal
of an interval.  We prove the uniform quadratic bound
\[
        P_N(J)\ge cH^2
\]
for every sufficiently large $H$, using the sum-free stability consequence
recorded by Balogh, Liu, Sharifzadeh and Treglown~\cite{BLST}.  After
puncturing one folded representative, the resulting quadratic surplus
dominates the maximum one-row deletion degree.  Rotating the templates then
absorbs the at-most-one deleted offset in each row and leaves a surviving
two-gap cycle.

For the other six middle lengths, if the additional edge misses the
near-central distance $s=m-j$, then its indicator word satisfies
\[
        \alpha_{x+2j}-\alpha_x=\delta_x-\delta_{x-s},
        \qquad j\in\{1,2,3\},
\]
where $\delta$ is supported on two defect positions.  For $j=2,3$ this
forces uniformly short monochromatic runs; two adjacent additive diagonals
then make all but $O(1)$ vertices forbidden in a defect row.  The case
$j=1$ has an explicit dihedral normal form, with its single interval orbit
eliminated by simplicity and a Hamiltonian-edge exchange.

The argument is deterministic.  Its only asymptotic ingredient is the
sum-free stability consequence just described.

\section{Cyclic preliminaries}\label{sec:prelim}

All cyclic indices are taken modulo the ambient order.  For a positive
integer $q$, write $[q]=\{1,\ldots,q\}$.  For ordinary integers $u,v$, set
\[
 [u,v]=\{u,u+1,\ldots,v\}\quad\text{when }u\le v,
 \qquad [u,v]=\varnothing\quad\text{when }u>v.
\]
If $u\equiv v\pmod 2$, also set
\[
 [u,v]_2=\{u,u+2,\ldots,v\}\quad\text{when }u\le v,
 \qquad [u,v]_2=\varnothing\quad\text{when }u>v.
\]
For subsets $X,Y$ of an abelian group $G$ and $t\in G$, write
\[
 X+Y=\{x+y:x\in X,\ y\in Y\},\qquad
 X-Y=\{x-y:x\in X,\ y\in Y\},
\]
\[
 X+t=\{x+t:x\in X\},\qquad X-t:=X+(-t),\qquad -X=\{-x:x\in X\}.
\]
For scalar-left operations we use the singleton convention
\[
        t+X:=X+t,
        \qquad
        t-X:=t+(-X)=\{t-x:x\in X\}.
\]
When $X\subseteq G$, the notation $X^c$ always means the ambient
complement $G\setminus X$.  For a statement $\mathcal P$, let
$\1_{\mathcal P}$ be $1$ when $\mathcal P$ is true and $0$ otherwise;
for a set $X$, we also write $\1_X(x)=\1_{x\in X}$.
For $a,b\in\Z_n$, write $[a,b]^+$ for the clockwise interval from $a$ to
$b$, including both endpoints.  In $\Z_{2m}$ an \emph{antipodal pair} is
a pair $\{x,x+m\}$.  If $S$ is a subset of a finite abelian
group $G$, its stabilizer is
\[
        \Stab(S):=\{g\in G:S+g=S\}.
\]
For a graph $\Gamma$ and a set $X\subseteq V(\Gamma)$, write
$N_\Gamma(X)$ for the open neighborhood of $X$.  For sets $X,Y$, their
symmetric difference is
\[
        X\triangle Y=(X\setminus Y)\cup(Y\setminus X).
\]
For a function or finite sequence $g$, put
\[
        \operatorname{supp}(g)=\{x:g(x)\ne0\}.
\]
The symbol $\oplus$ denotes addition modulo $2$.  On the sign alphabet
$\{+,-\}$, the notation $-\sigma$ denotes the opposite sign.  If $\xi$ is
a symbol and $t\ge0$, then $\xi^t$ denotes the word consisting of $t$
consecutive copies of $\xi$.  We use the convention $\max\varnothing=0$.
For $s\in\Z_n\setminus\{0\}$, the \emph{step-$s$ graph} $\Gamma_s(n)$ is the graph on
$\Z_n$ with edge set
\[
        \bigl\{\{x,x+s\}:x\in\Z_n\bigr\};
\]
its connected components are the orbits of translation by $s$.
For $A\subseteq\Z_n$, define
\[
        \Delta(A)=\{d\in\{1,\ldots,\lfloor n/2\rfloor\}:A\cap(A+d)\ne\varnothing\}.
\]
Thus $d\in\Delta(A)$ exactly when $A$ contains a pair at cyclic distance
$d$.  We write $(x)_+=\max\{x,0\}$ for the positive part of a real number
$x$.

\begin{lemma}[Chord lemma]\label{lem:chord}
Let $\Hh$ contain a Hamiltonian Berge cycle
\[
        C=v_0e_0v_1e_1\cdots v_{n-1}e_{n-1}v_0.
\]
Let $h\in E(\Hh)\setminus E(C)$.  If
$\{v_i,v_{i+s}\}\subseteq h$ for some $1\le s\le n-1$, then $\Hh$
contains Berge cycles of lengths
\[
        s+1\qquad\text{and}\qquad n-s+1.
\]
\end{lemma}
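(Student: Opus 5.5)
The chord splits the Hamiltonian cycle into two arcs, and closing each arc with $h$ gives the two required cycles. Concretely, let $\{v_i,v_{i+s}\}\subseteq h$ with $1\le s\le n-1$. The vertex sequence of $C$ breaks at $v_i$ and $v_{i+s}$ into the forward arc $v_i,v_{i+1},\ldots,v_{i+s}$ and the complementary arc $v_{i+s},v_{i+s+1},\ldots,v_{i+n}=v_i$. The forward arc uses the $s$ consecutive Hamiltonian edges $e_i,\ldots,e_{i+s-1}$. The complementary arc uses the $n-s$ edges $e_{i+s},\ldots,e_{i+n-1}$.

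For length $s+1$, take
\[
 v_i e_i v_{i+1} e_{i+1}\cdots e_{i+s-1} v_{i+s}\, h\, v_i .
\]
The vertices $v_i,\ldots,v_{i+s}$ are $s+1$ distinct vertices, since $s\le n-1$ and the $v_j$ are distinct in $C$. The edges $e_i,\ldots,e_{i+s-1}$ are distinct because they are distinct edges of $C$. The edge $h$ differs from all of them because $h\notin E(C)$. Each consecutive containment $\{v_j,v_{j+1}\}\subseteq e_j$ is inherited from $C$, and the closing step $\{v_{i+s},v_i\}\subseteq h$ is the hypothesis. Hence this is a Berge cycle of length $s+1$.

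For length $n-s+1$, apply the same argument to the complementary arc:
\[
 v_{i+s} e_{i+s}\cdots e_{i+n-1} v_{i}\, h\, v_{i+s}.
\]
This uses $n-s+1$ distinct vertices and $n-s+1$ distinct edges. Equivalently, the pair $\{v_{i+s},v_{(i+s)+(n-s)}\}$ lies in $h$, so this case is the first one with $s$ replaced by $n-s$.

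The boundary cases need a check, and this is the only real subtlety. When $s=1$ or $s=n-1$, one of the two cycles has length $2$. For instance, with $s=1$ we get $v_i e_i v_{i+1} h v_i$. This is a Berge $2$-cycle under the paper's convention precisely because $h\ne e_i$, and that again follows from $h\notin E(C)$ together with simplicity. No cycle of length $1$ can arise, since $s\ge1$ and $n-s\ge1$. Otherwise the argument is routine bookkeeping of indices modulo $n$.
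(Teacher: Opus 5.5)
Your proof is correct and follows the paper's argument exactly: you close each of the two Hamiltonian arcs between $v_i$ and $v_{i+s}$ with the unused edge $h$, and you note that an arc of length $1$ yields a Berge $2$-cycle because $h\notin E(C)$. (Simplicity is not needed for $h\ne e_i$; the hypothesis $h\notin E(C)$ already makes the two hyperedges distinct.)
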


\begin{proof}
The two Hamiltonian arcs between $v_i$ and $v_{i+s}$ have lengths $s$ and
$n-s$.  Adding the unused hyperedge $h$ to either arc gives a Berge cycle.
If one arc has length $1$, this is a Berge $2$-cycle; the two edges are
distinct because $h\notin E(C)$.
\end{proof}

\begin{lemma}[Odd cyclic distance rigidity]\label{lem:odd-distance}
Let $n=2r+1$, let $d\in\{1,\ldots,r\}$, and let
$A\subseteq\Z_n$ have size at least $r$.  If $d\notin\Delta(A)$, then
$|A|=r$, $\gcd(n,d)=1$, and $A$ is a maximum independent set in the cycle
on $\Z_n$ with edge set
\[
        \{\{x,x+d\}:x\in\Z_n\}.
\]
In particular, $A$ misses no cyclic distance other than $d$.  Moreover,
for fixed $d$ there are at most $n$ subsets of $\Z_n$ of size at least $r$
that miss $d$.  When $\gcd(n,d)=1$, these are exactly the $n$ maximum
independent sets of the $d$-step cycle.
\end{lemma}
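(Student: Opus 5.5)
The plan is to reduce the statement to the structure of the step-$d$ graph $\Gamma_d(n)$. The condition $d\notin\Delta(A)$ says precisely that $A\cap(A+d)=\varnothing$, i.e.\ $A$ is an independent set in $\Gamma_d(n)$. Since $1\le d\le r<n/2$, we have $d\ne n-d$, so $\Gamma_d(n)$ is $2$-regular. Write $g=\gcd(n,d)$. Then $\Gamma_d(n)$ is a disjoint union of $g$ cycles, one for each orbit of translation by $d$, and each cycle has length $n/g$. Because $n$ is odd, $n/g$ is odd, and it is at least $3$ since $d\not\equiv0$. So each component is an odd cycle, and an odd cycle of length $L$ has independence number $(L-1)/2$. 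Summing over components gives
\[
\alpha(\Gamma_d(n))=g\cdot\frac{n/g-1}{2}=\frac{n-g}{2}.
\]

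Now $|A|\ge r=(n-1)/2$ and $|A|\le (n-g)/2$ force $g=1$ and $|A|=r$. Hence $\gcd(n,d)=1$ and $A$ is a maximum independent set in the single $n$-cycle $\Gamma_d(n)$.

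Next, I will show that the maximum independent sets of an odd cycle $x_0x_1\cdots x_{n-1}$ of length $n=2r+1$ are exactly the $n$ sets obtained by omitting one vertex $x_k$ and then taking alternate vertices starting from $x_{k+1}$. To see this, note that an independent $r$-set leaves $r+1$ vertices uncovered, so it determines $r$ gaps between consecutive chosen vertices. Each gap has length at least $2$, and the gap lengths sum to $n=2r+1$. Therefore exactly one gap has length $3$ and all the others have length $2$. The position of the long gap determines the set, which gives exactly $n$ sets, and they are distinct. Parametrizing the cycle by $x_t=td$ (valid since $g=1$), these are the sets $A_k=\{k+d,k+3d,\dots,k+(2r-1)d\}$. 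This establishes both counting claims. When $\gcd(n,d)>1$, no set of size at least $r$ misses $d$, so the bound $\le n$ holds trivially.

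It remains to prove the ``in particular'' claim. Suppose $A$ also misses some $d'\ne d$ with $d'\le r$. Applying the above to $d'$ shows that $A$ is also a maximum independent set of $\Gamma_{d'}(n)$. Translating, I may assume $A=\{d,3d,\dots,(2r-1)d\}$. Multiplying by $d^{-1}$ (an automorphism of $\Z_n$), this becomes $B=\{1,3,\dots,2r-1\}$, which must miss the cyclic distance $e=\pm d'd^{-1}$ while containing distance $\pm1$ only if $d'=d$.

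The set $B$ contains all differences $\pm2,\pm4,\dots$. Since $n$ is odd, every residue is $\equiv$ an even number in $[2,2r-2]$ or its negative, except the residues $\pm(2r)=\mp1$. Concretely, the differences realized by $B$ are $2k$ for $1\le k\le r-1$ together with their negatives, which covers every nonzero residue other than $\pm1$. So $B$ misses only the distance class $\pm1$, which means $d'\equiv\pm d$, and since both lie in $[1,r]$ we get $d'=d$.

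This difference computation is the only step needing care, and it is routine. Pulling back by multiplication by $d$ shows that $A$ misses no cyclic distance other than $d$. I expect no real obstacle; the main bookkeeping point is the odd-cycle independence count and keeping distances modulo sign.
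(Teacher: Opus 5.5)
Your proof is correct and follows the paper's argument essentially step for step. The independence number $(n-g)/2$ of the step-$d$ graph forces $g=1$ and $|A|=r$; the one-long-gap structure gives exactly $n$ translates; and after multiplying by $d^{-1}$, the differences of a step-$2$ progression of length $r$ cover every nonzero residue except $\pm1$. The paper uses $\{0,2,\dots,2(r-1)\}$ where you use its translate $\{1,3,\dots,2r-1\}$, and your slip of writing $\{k+d,k+3d,\dots\}$ for $\{(k+1)d,(k+3)d,\dots\}$ is harmless, since both describe the same family of translates.
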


\begin{proof}
If $d\notin\Delta(A)$, then $A\cap(A+d)=\varnothing$.  Let $G_d$ be the
graph on $\Z_n$ with edges $\{x,x+d\}$.  If $g=\gcd(n,d)$, then $G_d$ is
the disjoint union of $g$ odd cycles of length $n/g$, and its independence
number is
\[
        g\frac{n/g-1}{2}=\frac{n-g}{2}.
\]
Since $|A|\ge(n-1)/2$, we must have $g=1$ and $|A|=r$.  Thus $A$ is a
maximum independent set in the single odd cycle $G_d$.

Writing the vertices of $G_d$ as
\[
        0,d,2d,\ldots,(n-1)d,
\]
the gaps between consecutive selected vertices of $A$ contain at least one
unselected vertex.  There are $r$ selected and $r+1$ unselected vertices,
so exactly one gap contains two unselected vertices and all other gaps
contain one.  After translation,
\[
        A=\{0,2d,4d,\ldots,2(r-1)d\}.
\]
Multiplying by $d^{-1}$, it is enough to inspect
\[
        E=\{0,2,4,\ldots,2(r-1)\}\subseteq\Z_{2r+1}.
\]
The nonzero differences of $E$ are the residues $2t$ with
$-(r-1)\le t\le r-1$, $t\ne0$.  These are precisely all nonzero residues
except $\pm1$.  Multiplying back, the only missing nonzero residue pair is
$\{\pm d\}$.  Hence $A$ misses no cyclic distance other than $d$.

For fixed $d$, if $\gcd(n,d)>1$ no such $A$ exists.  If
$\gcd(n,d)=1$, the sets are precisely the $n$ maximum independent sets of
the $d$-step odd cycle, obtained by placing the unique two-zero gap in any
position.
\end{proof}

\section{Odd order: prescribed leftover chords}\label{sec:odd-order}

Let $n=2r+1$.  Let $P=\Z_n$ be the set of adjacent-pair positions in the
fixed cyclic order, where position $i$ corresponds to the pair
$\{i,i+1\}$.  Given a finite family $\F$ of subsets of $\Z_n$, define the
bipartite graph $B(\F)$ with parts $\F$ and $P$ by joining
$F\in\F$ to $i\in P$ when $\{i,i+1\}\subseteq F$.

\begin{proposition}[One-distance exchange]\label{prop:one-distance}
Let $n=2r+1\ge5$, let $2\le d\le r$, and let $\F$ be a finite family of
distinct subsets of $\Z_n$, each of size at least $r$, with $|\F|>n$.
Suppose that $B(\F)$ has a matching saturating $P$.  Then $B(\F)$ has a
matching saturating $P$ for which at least one unmatched member of $\F$
realizes distance $d$.
\end{proposition}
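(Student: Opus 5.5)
The plan is to argue by contradiction using the Hungarian (alternating-path) forest of a fixed saturating matching, with Lemma~\ref{lem:odd-distance} pinning down the structure of every member that could be left unmatched. Fix a matching $M$ of $B(\F)$ saturating $P$. Since $|\F|>n=|P|$, the set $U$ of $M$-unmatched members of $\F$ is nonempty. Let $R\subseteq\F$ be the set of members reachable from $U$ by $M$-alternating paths that start at a member of $U$, use non-matching edges from $\F$ to $P$ and matching edges from $P$ back to $\F$, and end in $\F$. Every $F\in R$ can be made unmatched: flipping $M$ along the alternating path from some $U$-member to $F$ gives another matching saturating $P$ in which $F$ is unmatched. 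So it suffices to show that some $F\in R$ realizes distance $d$. Suppose instead that no member of $R$ realizes $d$.

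The standard Hall/König bookkeeping for the alternating forest then applies. Every position in $N_{B}(R)$ is matched by $M$, because $M$ saturates $P$, and its partner lies in $R$ by the definition of reachability. Distinct positions have distinct partners, and the members of $U\subseteq R$ are the partners of none of them. Hence
\[
 |R|\ \ge\ |N_B(R)|+|U|\ \ge\ |N_B(R)|+1 .
\]

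Now I bring in the structure. Each $F\in R$ has size at least $r$ and misses $d$, so Lemma~\ref{lem:odd-distance} applies. It gives $\gcd(n,d)=1$, and $F$ is a maximum independent set of the $d$-step odd cycle, that is, a translate
\[
 A_t=\{t,\,t+2d,\,\ldots,\,t+2(r-1)d\}.
\]
Here distinct members of $R$ are distinct sets, so they give distinct translates $t$.

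The key step is to compute $N_B(A_t)$, the set of positions $i$ with $\{i,i+1\}\subseteq A_t$. Let $u\in\Z_n$ satisfy $2du\equiv1$, and choose the representative of $u$ in $(-r,r]$. A pair $\{t+2da,\;t+2da+1\}$ then lies in $A_t$ precisely when both $a$ and $a+u$ lie in $[0,r-1]$. Hence $N_B(A_t)=t+Q$ for a fixed set $Q\subseteq\Z_n$ that depends only on $n$ and $d$.

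I then need $Q\ne\varnothing$. Since $d\ge2$, Lemma~\ref{lem:odd-distance} says that $A_t$ misses no cyclic distance other than $d$; in particular it realizes distance $1$, so it contains some adjacent pair. (Equivalently, $|u|\le r-1$.) Thus $Q$ is nonempty.

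Writing $T$ for the set of translates occurring in $R$, we get
\[
 |N_B(R)|=|T+Q|\ \ge\ |T|=|R|,
\]
which contradicts $|R|\ge|N_B(R)|+1$. Therefore some member of $R$ realizes $d$. The alternating-path flip that leaves this member unmatched produces the required matching saturating $P$.

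The main obstacle is the middle step: making sure the reachable set $R$, and not merely $U$, consists entirely of $d$-missing sets, and identifying their neighbourhoods as translates of one fixed nonempty set $Q$. The hypothesis $d\ge2$ is used exactly to guarantee $Q\ne\varnothing$; for $d=1$ the sets $A_t$ would contain no adjacent pair and the argument would fail.
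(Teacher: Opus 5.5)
Your proof is correct and follows essentially the same route as the paper's. Both run an alternating-reachability argument from the unmatched side, then use Lemma~\ref{lem:odd-distance} to write the reachable members as distinct translates of one set. That set's adjacent-pair neighbourhood $Q$ is nonempty because $d\ge2$, so $|T+Q|\ge|T|$ gives the contradiction. The only difference is that you root the argument at all of $U$ rather than at a single unmatched member $F_*$, which turns the paper's equality $|S|=|T|-1$ into your inequality $|R|\ge|N_B(R)|+|U|$.
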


\begin{proof}
Choose a matching $M$ saturating $P$.  If an unmatched member realizes
$d$, there is nothing to prove.  Otherwise choose an unmatched member
$F_*$.  Then $F_*$ misses $d$.  By Lemma~\ref{lem:odd-distance}, at most
$n$ members of $\F$ miss $d$; since $|\F|>n$, some member realizes $d$.

Orient the edges of $B(\F)$ alternately with respect to $M$: unmatched
edges from $\F$ to $P$, and matched edges from $P$ to $\F$.  Let
$T\subseteq\F$ and $S\subseteq P$ be the vertices reachable from $F_*$.  If
$T$ contains a member $G$ that realizes $d$, then switching along an
alternating path from $F_*$ to $G$ leaves $G$ unmatched while still
saturating $P$.  We may therefore assume that every member of $T$ misses
$d$.

We record the alternating-reachability relation explicitly.  Every
neighbor of a reachable family vertex is reachable: an unmatched incident
edge is directed from $\F$ to $P$, while the unique matched edge incident
with a reachable family vertex other than $F_*$ is the edge by which that
vertex was reached.  Conversely, every reachable position is reached from
a member of $T$.  Hence
\[
        S=N_{B(\F)}(T).
\]
Because $M$ saturates $P$, every $s\in S$ has a matched partner in $\F$;
the matched edge is directed from $s$ to that partner, which therefore lies
in $T$.  Conversely, every member of $T\setminus\{F_*\}$ is reached through
its unique matched edge from a position in $S$.  No other unmatched family
vertex can be reachable, since all directed edges entering the family side
are matched edges.  Thus $M$ pairs $S$ bijectively with
$T\setminus\{F_*\}$, and
\begin{equation}\label{eq:alternating-reachability}
        S=N_{B(\F)}(T),\qquad |S|=|T|-1.
\end{equation}
Choose $A\in T$, and let
\[
        Q=\{i:\{i,i+1\}\subseteq A\}.
\]
The set $Q$ is nonempty; otherwise $A$ would miss both distances $1$ and
$d$, contrary to Lemma~\ref{lem:odd-distance} because $d>1$.

By Lemma~\ref{lem:odd-distance}, all members of $T$ are distinct translates
of $A$.  Thus $T=\{A+i:i\in I\}$ for some $I\subseteq\Z_n$, and
\[
        N_{B(\F)}(T)=I+Q.
\]
For any fixed $q\in Q$, the translate $I+q$ has size $|I|=|T|$ and is
contained in $I+Q$, contradicting
\eqref{eq:alternating-reachability}.
\end{proof}

\begin{theorem}[Simultaneous leftover distances]\label{thm:odd-setsystem}
Let $n=2r+1\ge5$, and let $\F$ be a finite family of distinct subsets of
$\Z_n$, each of size at least $r$.  Suppose that $B(\F)$ has a matching
saturating $P$.  If $D\subseteq\{2,\ldots,r\}$ and
\[
        |\F|\ge n+|D|,
\]
then there is a matching in $B(\F)$ saturating $P$, together with distinct
unmatched members $F_d\in\F$, one for each $d\in D$, such that $F_d$
realizes distance $d$.
\end{theorem}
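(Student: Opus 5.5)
The plan is to reduce the statement to Hall's theorem in an enlarged bipartite graph and to check Hall's condition with Lemma~\ref{lem:odd-distance} and Proposition~\ref{prop:one-distance}. We may assume $D\neq\varnothing$, since otherwise there is nothing to prove.

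Let $B^+$ be the bipartite graph with parts $\F$ and $P\cup D$. A member $F\in\F$ is joined to $i\in P$ exactly as in $B(\F)$, and to $d\in D$ when $F$ realizes distance $d$. A matching of $B^+$ saturating $P\cup D$ is exactly what the theorem asks for. Its restriction to $P$ saturates $P$, and the members $F_d$ matched to the $d\in D$ are distinct. They are also unmatched with respect to the $P$-part, because each member of $\F$ is matched at most once. So it suffices to verify Hall's condition
\[
        |N_{B^+}(X\cup Y)|\ge|X|+|Y|\qquad(X\subseteq P,\ Y\subseteq D).
\]

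We split into three cases according to $|Y|$.

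\emph{Case $Y=\varnothing$.} Here $N_{B^+}(X)=N_{B(\F)}(X)$. Hall's condition holds because $B(\F)$ has a matching saturating $P$.

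\emph{Case $|Y|\ge2$.} By Lemma~\ref{lem:odd-distance}, every member of $\F$ misses at most one distance in $\{1,\ldots,r\}$. Hence every member realizes some $d\in Y$, so $N_{B^+}(Y)=\F$. This gives
\[
        |N_{B^+}(X\cup Y)|=|\F|\ge n+|D|\ge|X|+|Y|.
\]

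\emph{Case $Y=\{d\}$.} Since $|D|\ge1$, we have $|\F|\ge n+1>n$, so Proposition~\ref{prop:one-distance} applies. It gives a matching saturating $P$ together with an unmatched member that realizes $d$. Adding the edge from that member to $d$ yields a matching of the subgraph of $B^+$ on $\F\cup P\cup\{d\}$ that saturates $P\cup\{d\}$. Hall's condition therefore holds for every $X\cup\{d\}$ with $X\subseteq P$, since it is a necessary condition for such a matching.

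With Hall's condition verified in all cases, Hall's theorem gives the required matching saturating $P\cup D$.

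The main work is the single-distance case. It is already handled by the alternating-path and translate argument of Proposition~\ref{prop:one-distance}, so the only new point is that several distances together cannot create a Hall obstruction. That follows from the rigidity statement that a member of size at least $r$ misses at most one cyclic distance.
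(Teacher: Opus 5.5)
Your proof is correct, but it is organized differently from the paper's.

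The paper uses a two-line induction on $|D|$. It applies Proposition~\ref{prop:one-distance} to one $d\in D$ and deletes the freed $d$-realizing member $F_d$ from $\F$. The current matching still saturates $P$, and $|\F\setminus\{F_d\}|\ge n+|D\setminus\{d\}|$, so it recurses. Only the proposition is invoked; Lemma~\ref{lem:odd-distance} enters solely through the proposition's proof.

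You instead encode the whole statement as one Hall condition in the augmented graph $B^+$ and sort potential obstructions $X\cup Y$ by $|Y|$.
\begin{itemize}
\item For $|Y|\ge2$, the ``misses at most one distance'' clause of Lemma~\ref{lem:odd-distance} gives $N_{B^+}(Y)=\F$.
\item For $|Y|=1$, the proposition serves only as a certificate that Hall's condition holds.
\end{itemize}

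The induction is shorter and builds the matching directly by repeated exchanges. Your route is slightly heavier but more explanatory. It shows that the only genuine obstruction to leaving several prescribed chords unused is a single-distance one. The size hypothesis then enters only through the count $|\F|\ge|X|+|Y|$ when $|Y|\ge2$, and through $|\F|>n$ when $|Y|=1$.
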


\begin{proof}
Induct on $|D|$.  The case $D=\varnothing$ is immediate.  Choose $d\in D$.
Proposition~\ref{prop:one-distance} gives a matching saturating $P$ and
leaving a $d$-realizing member $F_d$ unmatched.  Remove $F_d$ and apply the
induction hypothesis to $D\setminus\{d\}$; the existing matching still
saturates $P$ in the smaller family.
\end{proof}

\begin{proof}[Proof of Theorem~\ref{thm:odd-prescribed}]
Identify $v_i$ with $i\in\Z_n$, and let
\[
        \F=\bigl\{\{i:v_i\in h\}:h\in\G\bigr\}.
\]
Simplicity makes the map from $\G$ to $\F$ injective.  The displayed
Hamiltonian cycle gives a matching in $B(\F)$ saturating $P$, by matching
$e_i$ to position $i$.  Theorem~\ref{thm:odd-setsystem} gives the required
reassignment and unmatched sets.  Translating back to hyperedges proves the
theorem.
\end{proof}

\begin{lemma}[Length two in odd order]\label{lem:two-cycle-odd}
Let $n=2r+1\ge7$.  A family of at least $n+1$ distinct subsets of an
$n$-element set, each of size at least $r$, contains two members sharing a
pair of points.  Consequently the corresponding hypergraph contains a
Berge cycle of length $2$.
\end{lemma}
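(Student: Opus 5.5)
The plan is a double count of vertex pairs. Suppose, for contradiction, that $\F$ is a family of at least $n+1$ distinct subsets of an $n$-element set, each of size at least $r$, in which no two members share a pair of points. Then the sets of $2$-subsets $\binom{F}{2}$, for $F\in\F$, are pairwise disjoint. Each of these sets has at least $\binom{r}{2}$ elements, since a member of size larger than $r$ only contains more pairs. All of them lie inside $\binom{[n]}{2}$, which has $\binom{2r+1}{2}=r(2r+1)$ elements. Therefore
\[
        |\F|\cdot\frac{r(r-1)}{2}\le r(2r+1),
        \qquad\text{so}\qquad
        |\F|\le\frac{2(2r+1)}{r-1}=4+\frac{6}{r-1}.
\]

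Next I compare this bound with $n+1$. Since $n\ge7$, we have $r\ge3$.
\begin{itemize}
\item For $r=3$ the bound gives $|\F|\le7<8=n+1$.
\item For $r\ge4$ it gives $|\F|\le6<n+1$.
\end{itemize}
Either way this contradicts $|\F|\ge n+1$. Hence two members of $\F$ share a pair $\{u,v\}$. This small-$r$ check is the only place the hypothesis $n\ge7$ enters, and it is the only step needing care. For $n=5$ ($r=2$) the bound degenerates and the statement can fail, since singletons of pairs are then just edges of $K_5$.

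For the consequence, apply this to the vertex sets of the hyperedges. Simplicity makes these vertex sets distinct, so we obtain two distinct hyperedges $h\neq h'$ that both contain $u$ and $v$. Then $u\,h\,v\,h'\,u$ is a Berge $2$-cycle.
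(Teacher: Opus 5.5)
Your proof is correct and uses the same pair-counting argument as the paper. The only difference is cosmetic. The paper writes the contradiction directly as $(2r+2)\binom r2-r(2r+1)=r(r^2-2r-2)>0$ for $r\ge3$. You instead bound $|\F|\le 4+6/(r-1)$ and split into the cases $r=3$ and $r\ge4$.
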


\begin{proof}
If no two members shared a pair, then
\[
        (n+1)\binom r2\le\binom n2.
\]
Substituting $n=2r+1$ gives
\[
        (2r+2)\binom r2\le r(2r+1),
\]
but
\[
        (2r+2)\binom r2-r(2r+1)=r(r^2-2r-2)>0
\]
for $r\ge3$, a contradiction.
\end{proof}

\begin{corollary}[Local odd-order pancyclicity]\label{cor:local-odd}
Let $n=2r+1\ge7$.  Let $\Hh$ be a simple $n$-vertex hypergraph containing
a Hamiltonian Berge cycle $C$.  Let $\G\subseteq E(\Hh)$ satisfy
$E(C)\subseteq\G$ and $|\G|>n$, and suppose that every hyperedge in $\G$ has
size at least $r$.  Then the subhypergraph with edge set $\G$ is
Berge-pancyclic.
\end{corollary}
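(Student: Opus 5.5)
We must produce Berge cycles of every length $2,\ldots,n$ using only edges of $\G$. Since $|\G|\ge n+1$ and all members have size at least $r$, length $2$ comes from Lemma~\ref{lem:two-cycle-odd}, and length $n$ is $C$ itself. For a length $\ell$ with $3\le\ell\le n-1$, write $\ell=s+1$ with $2\le s\le n-2$. Put $d=\min\{s,n-s\}$; this lies in $\{2,\ldots,r\}$ because $s\ne1,n-1$. The chord lemma produces lengths $d+1$ and $n-d+1$ from one unused edge realizing cyclic distance $d$. One of these equals $\ell$.

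So it suffices, for each single $d\in\{2,\ldots,r\}$, to find a Hamiltonian Berge cycle $C_d$ with edges in $\G$ and an edge $g\in\G\setminus E(C_d)$ realizing distance $d$. Apply Theorem~\ref{thm:odd-prescribed} with $D=\{d\}$. Its hypothesis $|\G|\ge n+1$ holds because $|\G|>n$.

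The theorem gives $C_d$ on the same vertex sequence $v_0,\ldots,v_{n-1}$, together with $g_d\in\G\setminus E(C_d)$ containing some pair $\{v_i,v_{i+d}\}$. Apply Lemma~\ref{lem:chord} to the hypergraph with edge set $\G$, the cycle $C_d$, the edge $h=g_d$ and $s=d$. This yields Berge cycles of lengths $d+1$ and $n-d+1$ whose edges lie in $E(C_d)\cup\{g_d\}\subseteq\G$.

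As $d$ ranges over $2,\ldots,r$, these lengths cover $3,\ldots,r+1$ and $r+2,\ldots,n-1$. Together with lengths $2$ and $n$, this gives every length from $2$ to $n$.

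No step is a real obstacle, since the heavy lifting is done by Theorem~\ref{thm:odd-prescribed}. Three points need checking.
\begin{itemize}
\item The chord lemma is applied to $C_d$, not to the original $C$.
\item The distinct-edge requirement in the chord lemma holds because $g_d\notin E(C_d)$.
\item Length $2$ cannot come from the chord argument, since distance $1$ is excluded from $D$. This is exactly why the separate counting Lemma~\ref{lem:two-cycle-odd}, and hence the assumption $n\ge7$, is needed.
\end{itemize}
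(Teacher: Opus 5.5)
Your proposal is correct and follows essentially the same route as the paper's proof. Length $n$ comes from $C$ and length $2$ from Lemma~\ref{lem:two-cycle-odd}. For each $d\in\{2,\ldots,r\}$ you apply Theorem~\ref{thm:odd-prescribed} with $D=\{d\}$ and then the chord lemma to the new cycle $C_d$, which yields lengths $d+1$ and $n-d+1$; these together cover $3,\ldots,n-1$.
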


\begin{proof}
Length $n$ is supplied by $C$, and length $2$ by
Lemma~\ref{lem:two-cycle-odd}.  For $d\in\{2,\ldots,r\}$, apply
Theorem~\ref{thm:odd-prescribed} with $D=\{d\}$.  We obtain a Hamiltonian
cycle $C_d$ and an unused hyperedge realizing distance $d$.
Lemma~\ref{lem:chord} gives lengths $d+1$ and $n-d+1$.  As $d$ ranges from
$2$ to $r$, these are exactly the lengths $3,4,\ldots,n-1$.
\end{proof}

\begin{proof}[Proof of Corollary~\ref{cor:odd-one-extra}]
Apply Corollary~\ref{cor:local-odd} with $\G=E(C)\cup\{f\}$.
\end{proof}

\section{The free-label two-gap certificate}\label{sec:two-gap}

For the remainder of the even-order proof, unless stated otherwise, write
the fixed Hamiltonian Berge cycle as
\[
        0\,e_0\,1\,e_1\cdots(2m-1)e_{2m-1}0
\]
on $\Z_{2m}$.  A two-gap template is specified by integers
$1\le d,e\le2m-1$ and two disjoint cyclic intervals of adjacent-pair
indices
\[
        I=[x,x+d-1]^+,
        \qquad
        J=[y,y+e-1]^+,
\]
where $|I|=d$ and $|J|=e$.  In particular, disjointness implies
$d+e\le2m$.
Deleting the open vertex gaps corresponding to $I$ and $J$ leaves two
retained arcs.  The bridge pairs are
\[
        \beta_I=\{x,x+d\},
        \qquad
        \beta_J=\{y,y+e\}.
\]
The labels indexed by $I\cup J$ are free; all retained adjacent pairs may
use their original labels.

\begin{proposition}[Free-label two-gap certificate]\label{prop:free-label}
Let $1\le d,e\le2m-1$, and let
$I=[x,x+d-1]^+$ and $J=[y,y+e-1]^+$ be disjoint intervals of
adjacent-pair indices in $\Z_{2m}$, so that $|I|=d$ and $|J|=e$.  If there
exist distinct
$i,j\in I\cup J$ such that
\[
        \beta_I\subseteq e_i,
        \qquad
        \beta_J\subseteq e_j,
\]
then the Hamiltonian labels contain a Berge cycle of length
\[
        2m-(d-1)-(e-1)=2m-d-e+2.
\]
\end{proposition}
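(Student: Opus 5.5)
We construct the cycle explicitly. Delete from the Hamiltonian cycle the interior vertices of the two gaps. Then we close up the two retained arcs using the bridge pairs $\beta_I$ and $\beta_J$, carried by the free labels $e_i$ and $e_j$.

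\emph{The retained arcs.} Since $I=[x,x+d-1]^+$ indexes the adjacent pairs $\{x,x+1\},\ldots,\{x+d-1,x+d\}$, the open vertex gap of $I$ is $x+1,\ldots,x+d-1$, which has $d-1$ vertices. Similarly the gap of $J$ is $y+1,\ldots,y+e-1$, with $e-1$ vertices. Because $I$ and $J$ are disjoint intervals of pair indices, these two gaps are disjoint. Neither gap contains an endpoint $x,x+d,y,y+e$. Indeed, $x+d$ lying in the gap of $J$ would force the pair index $x+d-1\in I$ to lie in $J$, and the other endpoints are handled symmetrically. So $2m-d-e+2$ vertices remain, and they form two retained arcs. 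One runs clockwise from $x+d$ to $y$, using only the pair indices strictly between $I$ and $J$ (the interval $[x+d,y-1]^+$). The other runs clockwise from $y+e$ to $x$, using the indices in $[y+e,x-1]^+$. Either arc may be a single vertex, namely when $y=x+d$ or $x=y+e$.

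\emph{Closing up.} The closed walk
\[
 x+d\to\cdots\to y\ \xrightarrow{e_j}\ y+e\to\cdots\to x\ \xrightarrow{e_i}\ x+d
\]
uses the original labels $e_k$, $k\notin I\cup J$, on the retained adjacent pairs. It then uses $e_j\supseteq\beta_J=\{y,y+e\}$ across the $J$-bridge and $e_i\supseteq\beta_I=\{x,x+d\}$ across the $I$-bridge. The vertices are distinct, since they are the retained vertices each listed once. The edges are distinct because the retained labels are indexed by $\Z_{2m}\setminus(I\cup J)$ and are pairwise distinct as edges of $C$, while $i,j\in I\cup J$ are distinct from each other and from the retained indices. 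Each consecutive pair lies in its assigned edge, so this is a Berge cycle. Its length equals the number of retained vertices, $2m-(d-1)-(e-1)$.

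\emph{Degenerate cases.} The only care needed is when the cycle is short or an arc collapses. Since the length is at least $2$, the sequence is a legitimate Berge cycle. If the length is $2$, both arcs are single vertices: $y=x+d$ and $x=y+e$, so $\beta_I=\beta_J$. The cycle then consists of the two vertices joined by the distinct edges $e_i$ and $e_j$, which is allowed. I expect no real obstacle. The main point to check is the bookkeeping that disjointness of $I$ and $J$ keeps the four bridge endpoints among the retained vertices, so that the bridge pairs genuinely join the ends of the two arcs in the right order.
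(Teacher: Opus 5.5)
Your proof is correct and uses the same construction as the paper: $e_i$ and $e_j$ go on the two bridges, and the original labels, indexed outside $I\cup J$, go on the two retained arcs. The paper treats $d+e=2m$ (a Berge $2$-cycle) and $d+e=2m-1$ (a triangle) as separate cases and rotates to $I=[0,d-1]$ for the rest, whereas you cover every case at once using your remark about arcs that collapse to a single vertex.
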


\begin{proof}
Use $e_i$ and $e_j$ on the two bridges.  If $d+e=2m$, the two disjoint
index intervals partition the Hamiltonian order.  Their bridge pairs are
the same unordered pair, so the distinct labels $e_i,e_j$ form a Berge
$2$-cycle.

If $d+e=2m-1$, there is a unique index $t\notin I\cup J$.  The two bridge
pairs and the retained adjacent pair $\{t,t+1\}$ are the three sides of a
triangle on the three boundary vertices of the two intervals.  Use
$e_i,e_j,e_t$ on these sides.  The labels are distinct because
$i,j\in I\cup J$ and $t\notin I\cup J$.

Assume now that $d+e\le2m-2$.  Rotate the cyclic order and choose
integer lifts so that
\[
        I=[0,d-1],\qquad J=[y,y+e-1],
        \qquad d\le y\le2m-e.
\]
Starting at the vertex $0$, use $e_i$ on the bridge $0d$, follow the
Hamiltonian labels $e_d,e_{d+1},\ldots,e_{y-1}$ from $d$ to $y$, use
$e_j$ on the bridge from $y$ to $y+e$ modulo $2m$, and then follow
$e_{y+e},e_{y+e+1},\ldots,e_{2m-1}$ back to $0$.  Either displayed
Hamiltonian-label interval may be empty: when $y=d$ the first retained
vertex arc degenerates to the common boundary vertex $d$, and when
$y+e=2m$ the second degenerates to the vertex $0$.  Thus the construction
still forms one cyclic component in the adjacent cases.  The bridge labels
are distinct, and all original Hamiltonian labels used on the retained arcs
have indices outside $I\cup J$.  The vertex count is
\[
        1+(y-d+1)+(2m-y-e)=2m-d-e+2,
\]
which proves the claim.
\end{proof}

\section{Lengths outside the middle band}\label{sec:outside}

We use the following setup throughout this section.

\begin{setup}[Even one-extra-edge setup]\label{setup:even-outside}
Let $n=2m$, $m\ge5$, and let
\[
        C=0\,e_0\,1\,e_1\cdots(2m-1)e_{2m-1}0
\]
be a Hamiltonian $(m-1)$-uniform Berge cycle on vertex set $\Z_{2m}$.  Let
\[
        f\notin\{e_0,\ldots,e_{2m-1}\}
\]
be an additional $(m-1)$-edge, and put
\[
        \mathcal G=\{e_0,\ldots,e_{2m-1},f\}.
\]
\end{setup}

The following form of Kneser's theorem will be used in the distance-set
reduction; the stabilizer notation was defined in
Section~\ref{sec:prelim}.

\begin{theorem}[Kneser~\cite{Kneser}]\label{thm:kneser}
Let $G$ be a finite abelian group and let $X,Y\subseteq G$.  If
$H=\Stab(X+Y)$, then
\[
        |X+Y|\ge |X+H|+|Y+H|-|H|.
\]
\end{theorem}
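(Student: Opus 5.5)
The statement is Kneser's theorem, which the paper quotes from the literature. I would therefore cite \cite{Kneser} rather than reprove it. For completeness, here is the standard route I would follow if a self-contained argument were wanted.

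\textbf{Reduction.} Put $H=\Stab(X+Y)$. The sets $X+H$, $Y+H$ and $X+Y$ are unions of $H$-cosets, and
\[
        (X+H)+(Y+H)=X+Y+H=X+Y.
\]
So we may replace $X$ and $Y$ by $X+H$ and $Y+H$ and pass to the quotient $G/H$. In the quotient the sumset has trivial stabilizer, and the claim becomes
\[
        |\bar X+\bar Y|\ge|\bar X|+|\bar Y|-1 .
\]
Multiplying by $|H|$ recovers the stated inequality. It therefore suffices to prove: if $\Stab(X+Y)$ is trivial, then $|X+Y|\ge|X|+|Y|-1$.

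\textbf{Main argument.} I would argue by contradiction, choosing a counterexample with $|Y|$ minimal and, subject to that, $|X|+|Y|$ extremal. Apply the Dyson $e$-transform: for $e\in X-Y$, set
\[
        X(e)=X\cup(Y+e),\qquad Y(e)=Y\cap(X-e).
\]
Then $X(e)+Y(e)\subseteq X+Y$ and $|X(e)|+|Y(e)|=|X|+|Y|$. If some $e$ gives $Y(e)\subsetneq Y$, induct on the smaller pair. The induction may return a nontrivial stabilizer $K$ of $X(e)+Y(e)$. One then compares $X+Y$ with $X(e)+Y(e)+K$ and counts cosets of $K$ that are only partially filled. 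This yields either the inequality for $(X,Y)$ or a nontrivial period of $X+Y$, which contradicts the reduction.

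If instead $Y+e\subseteq X$ for every $e\in X-Y$, then $X+Y-Y\subseteq X$. This forces $X+(Y-Y)=X$, so $X$, and hence $X+Y$, is stable under the subgroup generated by $Y-Y$. That subgroup is nontrivial once $|Y|\ge2$. The case $|Y|=1$ is trivial.

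\textbf{Main obstacle.} The delicate step is the coset bookkeeping when the induction returns a nontrivial stabilizer $K$. I would follow the standard treatment in Nathanson's or Tao--Vu's textbook at this point. In the paper itself I would simply cite the result.
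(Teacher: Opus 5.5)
Your plan to cite the theorem is exactly what the paper does: it quotes Kneser's theorem from the literature without proof. Your reduction to the aperiodic statement $|\bar X+\bar Y|\ge|\bar X|+|\bar Y|-1$ in $G/H$ is also correct.

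The optional self-contained sketch is only an outline, not a proof. The substantive step of the classical argument is handling a nontrivial stabilizer $K$ of $X(e)+Y(e)$ after the $e$-transform, and you defer that step to the textbooks. This does not matter here, because nothing in the paper requires reproving the theorem.
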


\begin{lemma}[Length two]\label{lem:even-two-cycle}
Let $m\ge5$.  Any set of $2m+1$ distinct $(m-1)$-subsets of a $2m$-element set contains two members sharing a pair of vertices.  Consequently every setup satisfying Setup~\ref{setup:even-outside} contains a Berge cycle of length $2$.
\end{lemma}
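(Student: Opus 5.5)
The plan is to mirror the pair-counting argument of Lemma~\ref{lem:two-cycle-odd}. Suppose, for contradiction, that $2m+1$ distinct $(m-1)$-subsets of a $2m$-element set pairwise share at most one vertex. Then the families of pairs $\binom{F}{2}$, one for each member $F$, are pairwise disjoint subsets of the set of all pairs of the ground set. Comparing total sizes gives
\[
        (2m+1)\binom{m-1}{2}\le\binom{2m}{2}=m(2m-1).
\]

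The next step is to show that this inequality fails for $m\ge5$. The left side equals $(2m+1)(m-1)(m-2)/2$, so it suffices to check
\[
        (2m+1)(m-1)(m-2)>2m(2m-1).
\]
At $m=5$ this reads $11\cdot4\cdot3=132>90$. For larger $m$ the left side is cubic in $m$ with positive leading coefficient, while the right side is quadratic. To make the monotonicity explicit, one can expand the difference as $2m^3-9m^2+5m+4$. Written as $m^2(2m-9)+5m+4$, this is visibly positive once $m\ge5$, since then $2m-9\ge1$. This contradiction shows that some two members share a pair of vertices.

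For the consequence, apply the first part to the $2m+1$ members of $\mathcal G$ in Setup~\ref{setup:even-outside}. They are distinct because the hypergraph is simple and $f\notin E(C)$, and each is an $(m-1)$-subset of $\Z_{2m}$. So two distinct hyperedges $g,g'\in\mathcal G$ both contain some pair $\{u,v\}$, and $u\,g\,v\,g'\,u$ is a Berge $2$-cycle. No step here is a real obstacle; the only point needing care is verifying the polynomial inequality at the threshold $m=5$, and the factorization above handles that.
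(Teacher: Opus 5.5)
Your proposal is correct and is the same pair-counting argument the paper uses, with the consequence for Setup~\ref{setup:even-outside} handled properly. One arithmetic slip: the difference $(2m+1)(m-1)(m-2)-2m(2m-1)$ expands to $2m^3-9m^2+3m+2$, not $2m^3-9m^2+5m+4$; your own check gives $132-90=42$ at $m=5$, whereas your polynomial gives $54$. The same grouping $m^2(2m-9)+3m+2$ still shows positivity for $m\ge5$, so the conclusion stands.
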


\begin{proof}
If no two members shared a pair, then
\[
        (2m+1)\binom{m-1}{2}\le \binom{2m}{2},
\]
which is false for $m\ge5$.
\end{proof}

\begin{lemma}[Berge triangle]\label{lem:even-triangle}
Let $m\ge8$.  Any set of $2m+1$ distinct $(m-1)$-subsets of a $2m$-element set contains a Berge triangle.  Consequently every setup satisfying Setup~\ref{setup:even-outside} with $m\ge8$ contains a Berge cycle of length $3$.
\end{lemma}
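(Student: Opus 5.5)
The plan is to argue by contradiction and count. Suppose a family $\F$ of $2m+1$ distinct $(m-1)$-subsets of $\Z_{2m}$ has no Berge triangle. For each member $F\in\F$, let $K(F)=\binom{F}{2}$ be its set of pairs. Let $p(\{u,v\})$ be the number of members containing the pair $\{u,v\}$.

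First I would isolate the structure forced by the absence of a Berge triangle, using the observation that three pairwise-distinct members can close a triangle. Take a triple $u,v,w$ with $\{u,v\}\subseteq F_1$, $\{v,w\}\subseteq F_2$, $\{u,w\}\subseteq F_3$ and $F_1,F_2,F_3$ distinct; this is a Berge triangle. A single member of size $m-1\ge7$ already contains $\binom{m-1}{3}$ vertex triangles, so the question is whether distinct edges can be assigned to their three sides.

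By Hall's theorem for the three sides, the only obstructions are:
\begin{itemize}[nosep]
\item some side has $p=0$;
\item two sides are covered only by one common member;
\item all three sides are covered by at most two members in total.
\end{itemize}
Since the pair-multiplicity sum is $(2m+1)\binom{m-1}{2}\approx m^3/4$, which is much larger than $\binom{2m}{2}\approx 2m^2$, the average $p$ is about $m/8$. This average is at least $2$ once $m$ is moderately large, and the hypothesis $m\ge8$ is presumably where the numbers close.

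Concretely, I would pick two members $F_1\neq F_2$ with $|F_1\cap F_2|\ge2$; Lemma~\ref{lem:even-two-cycle} provides such a pair. Take $u,v\in F_1\cap F_2$. Any third member $F_3$ that meets $F_1\cup F_2$ in a vertex $w\ne u,v$ with $u\in F_3$ yields a triangle: use $F_1$ on $uv$, $F_2$ on $vw$ if $w\in F_2$ (or swap roles), and $F_3$ on $uw$. More cleanly, look for a vertex $w$ and a third member $F_3\ni u,w$ with $w\in F_1\cup F_2$. The no-triangle assumption then forces every other member to avoid $u$, or to meet $(F_1\cup F_2)\setminus\{u,v\}$ only in a controlled way. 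The same holds with $u$ and $v$ exchanged, and for every pair inside $F_1\cap F_2$.

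I would then iterate this over the heavily covered pairs. The goal is to show that the pairs with $p\ge2$ form a set $S$ containing no path $u\,v\,w$ whose two sides lie in sets that can be distinguished from a third set covering $uw$. This should bound the number of pairs with $p\ge2$ by $O(m)$ and the multiplicities on them by a small constant. That gives a bound of the shape
\[
\sum p \;\le\; \binom{2m}{2} + O(m^2)\cdot(\text{small}),
\]
which contradicts $(2m+1)\binom{m-1}{2}$ once $m\ge8$.

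The main obstacle is making the second step rigorous: turning ``no Berge triangle'' into a clean upper bound on $\sum_{\text{pairs}}(p-1)_+$. What I would try first is the standard Berge-triangle-free estimate that the total $\sum_F\bigl(\binom{|F|}{2}\bigr)$ is at most about $\binom{n}{2}$ plus a linear-in-$n$ correction. This comes from assigning each member a pair of its own, as in Győri's method for Berge-triangle-free hypergraphs. If that bound is available in the form $\sum_F(|F|-2)\le c\,n^2$, the final inequality reduces to a cubic-versus-quadratic comparison at $m\ge8$, to be checked directly.
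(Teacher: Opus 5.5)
Your proposal is not yet a proof. The decisive step, turning ``no Berge triangle'' into an upper bound on $\sum(p-1)_+$, is exactly the one you leave open, and neither suggested way of closing it works as stated.

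\begin{itemize}
\item The claim that the pairs with $p\ge2$ number $O(m)$, with bounded multiplicities, is not derived from anything. The Hall-obstruction list is never actually used.
\item The ``standard estimate'' $\sum_F\binom{|F|}{2}\le\binom n2+O(n)$ is false for Berge-triangle-free families. The two distinct sets $[n]$ and $[n]\setminus\{x\}$ contain no Berge triangle, since a Berge triangle needs three distinct edges. Yet they give $\binom n2+\binom{n-1}2$.
\item In the Gy\H{o}ri form, $\sum_F(|F|-2)=(2m+1)(m-3)$ is quadratic in $m$, not cubic. So a bound $cn^2=4cm^2$ yields a contradiction only when $c<1/2$. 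A bound with constant $1/8$ would suffice, but you would be importing a nontrivial theorem and would have to check that it applies for every $n=2m\ge16$.
\item A minor point: the pair-multiplicity sum $(2m+1)\binom{m-1}{2}$ is about $m^3$, so the average $p$ is about $m/2$, not $m/8$.
\end{itemize}

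The missing idea is small, and your own triangle-closing observation nearly supplies it: work with a pair of multiplicity at least three rather than two.

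\begin{itemize}
\item Since $(2m+1)\binom{m-1}{2}>2\binom{2m}{2}$ for $m\ge7$, some pair $\{u,v\}$ lies in three members $F_1,F_2,F_3$.
\item Suppose two of the remainders $F_i\setminus\{u,v\}$ share a vertex $x$, say those of $F_1$ and $F_2$. Then $F_3,F_1,F_2$ cover $uv,ux,vx$, which is a Berge triangle.
\item Without a triangle, the three remainders are therefore pairwise disjoint $(m-3)$-subsets of a $(2m-2)$-set. This is impossible, since $3(m-3)>2m-2$ exactly when $m\ge8$.
\end{itemize}

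This is the paper's argument. The paper locates the high-codegree pair slightly differently: it takes a vertex $v$ of degree at least $m$, then uses a codegree pigeonhole among $m$ edges through $v$ to get some $w$ with codegree $c\ge\lceil m(m-2)/(2m-1)\rceil\ge3$. Plain pair averaging works equally well.

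The ``consequently'' clause then follows because the $2m+1$ edges of $\mathcal G$ are distinct, by simplicity and $f\notin E(C)$.
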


\begin{proof}
The total degree is $(2m+1)(m-1)$, so some vertex $v$ has degree at least
$m$.  Choose any $m$ edges containing $v$.  Within this chosen subfamily,
the total number of incidences with the other $2m-1$ vertices is at least
$m(m-2)$.  Hence some $w\ne v$ has codegree
\[
        c\ge \left\lceil\frac{m(m-2)}{2m-1}\right\rceil
\]
within the chosen subfamily.  Let $\mathcal F_{vw}$ be these $c$ chosen
edges containing $\{v,w\}$.  If the sets $E\setminus\{v,w\}$ for $E\in\mathcal F_{vw}$ were pairwise disjoint, then $c(m-3)\le2m-2$.  This is impossible for $m\ge8$, since
\[
        \frac{m(m-2)(m-3)}{2m-1}>2m-2.
\]
Thus two distinct edges $F_1,F_2\in\mathcal F_{vw}$ share a vertex $x\notin\{v,w\}$.  Since $c\ge3$, choose a third edge $C\in\mathcal F_{vw}\setminus\{F_1,F_2\}$.  The three edges $C,F_1,F_2$ realize respectively $\{v,w\}$, $\{v,x\}$, and $\{w,x\}$, giving a Berge triangle.
\end{proof}

\begin{lemma}[Boundary bypass]\label{lem:boundary-bypass}
Let
\[
        C=0\,e_0\,1\,e_1\cdots(n-1)e_{n-1}0
\]
be a Hamiltonian Berge cycle.  Let $1\le d\le n-2$.  If, for some $i$,
\[
        i-d\in e_i\qquad\text{or}\qquad i+d+1\in e_i,
\]
then the Hamiltonian labels contain a Berge cycle of length $n-d$.
\end{lemma}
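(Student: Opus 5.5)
Both alternatives reduce to a single use of the free-label two-gap certificate, Proposition~\ref{prop:free-label}, in which one gap has length~$d$ and the other has length~$1$. A length-one gap $J=\{j\}$ has bridge pair $\beta_J=\{j,j+1\}$. That pair lies in $e_j$, and $j\in J$, so $e_j$ is always an admissible bridge label for it. The certificate then produces a cycle of length $n-(d-1)-(1-1)=n-d+1$, which is one too long. So I will instead take gaps of lengths $d+1$ and $1$, sharing a boundary vertex, so that $e_i$ spans the large bridge. This gives length $n-(d+1)-1+2=n-d$, as required. Proposition~\ref{prop:free-label} is stated for $\Z_{2m}$, but its proof uses nothing about parity, so it applies verbatim in $\Z_n$. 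Alternatively, I will write the cycle out by hand.

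First suppose $i+d+1\in e_i$. Take $J=\{i\}$ and $I=[i+1,i+d]^+$, which has length $d$. Then $\beta_I=\{i+1,i+d+1\}$ and $\beta_J=\{i,i+1\}$. Now I merge the two bridges at the common vertex $i+1$: the path $i,\,i+1,\,i+d+1$ is replaced by the single hop $i\to i+d+1$ using $e_i$. Concretely, the cycle is
\[
 i\; e_i\; (i+d+1)\; e_{i+d+1}\;(i+d+2)\cdots(i-1)\;e_{i-1}\; i .
\]
The labels used are $e_i$ together with the Hamiltonian labels $e_{i+d+1},\dots,e_{i-1}$. Their indices avoid $[i+1,i+d]$ and also avoid $i$ itself, so all labels are distinct. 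The vertex set is $\{i\}\cup[i+d+1,i-1]^+$, of size $n-d$. The condition $d\le n-2$ guarantees that the arc from $i+d+1$ back to $i$ has at least one edge. If $n-d=2$, the cycle degenerates to the pair $\{i,i+d+1\}$ joined by $e_i$ and $e_{i-1}$. Since $i+d+1\equiv i-1$ in that case, these are two distinct labels containing that pair, so we get a Berge $2$-cycle.

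The case $i-d\in e_i$ is symmetric. The cycle is
\[
 (i+1)\; e_i\;(i-d)\;e_{i-d}\cdots e_{i-1}?
\]
No: the hop must go from $i+1$, since $i+1\in e_i$. So the cycle is $(i+1)\,e_i\,(i-d)$, followed by the Hamiltonian arc from $i+1$ forward to $i-d$. That arc uses labels $e_{i+1},\dots,e_{i-d-1}$, whose indices avoid $[i-d,i]$. This skips the $d$ vertices $i-d+1,\dots,i$. The vertex count is $n-d$, and the labels are distinct because $e_i$ is not among $e_{i+1},\dots,e_{i-d-1}$. Equivalently, this is the first case applied to the reversed Hamiltonian order, so I would simply invoke reflection symmetry.

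The only real obstacle is bookkeeping at the extremes: checking distinctness of labels and the degenerate lengths $n-d\in\{2,3\}$. For $n-d=3$ the cycle is a triangle whose three labels have distinct indices. I would verify these small cases explicitly and otherwise present the argument as the direct construction above.
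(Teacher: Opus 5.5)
Your proof is correct and is essentially the paper's argument: a Hamiltonian arc with $n-d-1$ edges closed by the chord $e_i$. The paper writes out only the case $i-d\in e_i$, using the arc from $i+1$ to $i-d$, and calls the other case symmetric. The detour through Proposition~\ref{prop:free-label} is unnecessary and its description is inconsistent: you announce gaps of lengths $d+1$ and $1$, but then take $I=[i+1,i+d]^+$ of length $d$ and shortcut by hand. In a write-up, present only the explicit cycles.
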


\begin{proof}
If $i-d\in e_i$, traverse the Hamiltonian arc from $i+1$ to $i-d$ and close with $e_i$.  This arc has length $n-d-1$, so the resulting cycle has length $n-d$.  The other case is symmetric.
\end{proof}

\begin{lemma}[Distance-two adjacent-row lock]\label{lem:s2-adjacent-lock}
Assume Setup~\ref{setup:even-outside}.  Suppose $f\cap(f+2)=\varnothing$ and no Berge cycle of length $2m-1$ exists.  If $p,p+1\in f$, then
\[
        e_p\setminus f\subseteq T_2^+(f)\cap T_2^-(f),
\]
where
\[
        T_2^+(A)=\{x\notin A:x+2\notin A\},\qquad
        T_2^-(A)=\{x\notin A:x-2\notin A\}.
\]
\end{lemma}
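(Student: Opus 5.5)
The plan is to argue by contradiction: take $x\in e_p\setminus f$, and show that if $x+2\in f$ or $x-2\in f$ then we can write down a Berge cycle of length $2m-1$ using only labels from $\mathcal G$. The key observation is that $f\supseteq\{p,p+1\}$, so $f$ can stand in for $e_p$ on the adjacent pair $\{p,p+1\}$, or close a rerouted cycle through $p$ or $p+1$. That frees $e_p$ to serve as a chord from $x$ back to $p$ or $p+1$. Each cycle we build omits exactly one vertex, $x+1$ or $x-1$. The two halves of the conclusion, $x\in T_2^+(f)$ and $x\in T_2^-(f)$, are mirror images under reversing the cyclic order, so I would treat $x+2$ in detail and get $x-2$ by symmetry. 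Note that $x\notin f$ holds by assumption, so only the conditions on $x\pm2$ need proof.

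\emph{Generic case, $x+2\in f$.} Since $x\notin f$, we have $x\ne p,p+1$. Assume first that $x\ne p-1$, so $x+2\ne p+1$. Consider the closed walk
\[
 p+1\xrightarrow{e_{p+1},\dots,e_{x-1}} x \xrightarrow{\;e_p\;} p \xrightarrow{e_{p-1},\dots,e_{x+2}} x+2 \xrightarrow{\;f\;} p+1 .
\]
The first leg runs forward along the Hamiltonian order and the third runs backward. The vertices are all of $\Z_{2m}$ except $x+1$, hence distinct and $2m-1$ in number. The labels are $f$, $e_p$, and the Hamiltonian labels other than $e_p,e_x,e_{x+1}$, so they are distinct because $f\notin E(C)$. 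Every consecutive pair is contained in its label: $\{x,p\}\subseteq e_p$ and $\{x+2,p+1\}\subseteq f$.

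The degenerate sub-cases need a check. If $x+1=p-1$, the backward leg is the empty walk at $x+2=p$, which is fine. If $x=p+2$, the first leg is a single edge. I would record that these still give one cycle of the correct length, in the same way as the adjacent cases in Proposition~\ref{prop:free-label}. The hypothesis $f\cap(f+2)=\varnothing$ enters only in the excluded case $x=p-1$. There $x+2=p+1\in f$ would force $x=p-1\notin f$, which is consistent, so we need a separate cycle. But now $e_p\supseteq\{p-1,p,p+1\}$, and $e_p$ closes the Hamiltonian path $p+1,p+2,\dots,p-1$ (labels $e_{p+1},\dots,e_{p-2}$). This is a Berge cycle of length $2m-1$ omitting $p$, again a contradiction; it is essentially Lemma~\ref{lem:boundary-bypass} with $d=1$.

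\emph{Mirror case, $x-2\in f$.} Here I would use
\[
 p\xrightarrow{e_{p-1},\dots,e_{x}} x \xrightarrow{\;e_p\;} p+1 \xrightarrow{e_{p+1},\dots,e_{x-3}} x-2 \xrightarrow{\;f\;} p ,
\]
which omits $x-1$ and uses the labels $f$, $e_p$, and the Hamiltonian labels other than $e_p,e_{x-2},e_{x-1}$. The exceptional case is now $x=p+2$, handled by $e_p\ni p,p+1,p+2$ closing the path $p+2,\dots,p$.

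The main obstacle is simply bookkeeping in the boundary positions: confirming that the legs have the claimed directions and label sets when $x$ is within distance $2$ of $p$ or $p+1$, and that no label is reused there. I expect no further ideas are needed.
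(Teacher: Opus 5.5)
Your proposal is correct and is essentially the paper's proof: you use the same two rerouted $(2m-1)$-cycles, one omitting $x+1$ and one omitting $x-1$, with $e_p$ as the chord from $x$ back to $p$ or $p+1$ and $f$ closing through $\{p,p+1\}$, and you remove the boundary cases $x=p-1$ and $x=p+2$ by the boundary bypass with $d=1$, exactly as the paper does. One small correction: the hypothesis $f\cap(f+2)=\varnothing$ is never actually used in your argument (nor in the paper's), so your remark that it ``enters'' at $x=p-1$ should simply be dropped.
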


\begin{proof}
Place $f$ at the adjacent pair $\{p,p+1\}$, freeing $e_p$.  If $e_p$ realized cyclic distance $2$, the chord lemma applied to the new Hamiltonian cycle would give length $2m-1$.  Hence $e_p$ also misses distance $2$.

We first prove containment in $T_2^+(f)$.  If
$a\in e_p\cap(f-2)$ and $a\notin\{p-1,p\}$, then the sequence
\[
        p+1,p+2,\ldots,a,\ p,p-1,\ldots,a+2,\ p+1
\]
uses $e_p$ for the jump from $a$ to $p$ and $f$ for the jump from
$a+2$ to $p+1$, giving a Berge cycle of length $2m-1$.  Thus
$e_p\cap(f-2)\subseteq\{p-1,p\}$.  The boundary bypass forbids
$p-1\in e_p$, and $p\in f$, so no vertex of $e_p\setminus f$ lies in
$f-2$.

For the reverse containment, rotate so that $p=0$ and suppose that
$a\in e_0\cap(f+2)$ but $a\notin f$.  The residues $0,1$ lie in $f$,
so $a\notin\{0,1\}$.  The case $a=2$ is excluded by the boundary-bypass
lemma with $d=1$, and the case $a=-1$ is excluded by its other direction.
Thus choose the representative $3\le a\le2m-2$.  In the integer lift, the
sequence
\[
 0,-1,-2,\ldots,a-2m,\ 1,2,\ldots,a-2,\ 0
\]
contains every residue except $a-1$.  Use the original Hamiltonian labels
on the two displayed arcs, use $e_0$ on the jump $a\to1$, and use $f$ on
the jump $a-2\to0$.  The two arc-label intervals are disjoint and omit
$e_0$, while $f$ is additional.  Hence all labels are distinct and the
sequence is a Berge cycle of length $2m-1$, a contradiction.  Therefore
no vertex of $e_p\setminus f$ lies in $f+2$, which proves containment in
$T_2^-(f)$.
\end{proof}

\begin{lemma}[Distance-two defect form]\label{lem:s2-defect-form}
Let $m\ge3$.  Let $A\subseteq\Z_{2m}$ have size $m-1$ and satisfy
$A\cap(A+2)=\varnothing$.
\begin{enumerate}[label=\textup{(\roman*)}]
\item If $m$ is odd, then $T_2^+(A)\cap T_2^-(A)=\varnothing$.
\item If $T_2^+(A)\cap T_2^-(A)\ne\varnothing$, then $m$ is even and there is a unique vertex $u\in T_2^+(A)\cap T_2^-(A)$ such that, with $F=A\cup\{u\}$,
\[
        F+2=F^c.
\]
\end{enumerate}
\end{lemma}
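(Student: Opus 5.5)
The plan is to decompose $\Z_{2m}$ into the orbits of translation by $2$, i.e. the step-$2$ graph $\Gamma_2(2m)$. This graph is the disjoint union of two cycles of length $m$: the even residues and the odd residues. The condition $A\cap(A+2)=\varnothing$ says exactly that $A$ is independent in $\Gamma_2(2m)$. Write $A_0,A_1$ for the traces of $A$ on the two cycles. Each $A_\epsilon$ is independent in an $m$-cycle, so $|A_\epsilon|\le\lfloor m/2\rfloor$.

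A vertex $u$ lies in $T_2^+(A)\cap T_2^-(A)$ exactly when $u\notin A$ and both of its neighbours $u\pm2$ in $\Gamma_2(2m)$ lie outside $A$. In that case $A\cup\{u\}$ is still independent.

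For part (i), suppose $m$ is odd. Then $|A_0|+|A_1|\le 2\cdot\frac{m-1}{2}=m-1=|A|$, so both traces are maximum independent sets in odd $m$-cycles. If some $u$ were in the intersection, adding $u$ to the trace on its cycle would give an independent set of size $\frac{m+1}{2}$ in an odd $m$-cycle. That is impossible, so the intersection is empty.

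For part (ii), the first step is to conclude that $m$ is even, which is the contrapositive of (i). Then each cycle has independence number $m/2$, and the total is $|A_0|+|A_1|=m-1$. So one trace, say $A_{\epsilon}$, has size $m/2$ and the other has size $m/2-1$. A maximum independent set of size $m/2$ in an even $m$-cycle is one of the two colour classes, alternating every other vertex. Such a set leaves no vertex whose two neighbours both lie outside it. So any $u$ in the intersection lies on the deficient cycle, with trace $A_{1-\epsilon}$ of size $m/2-1$.

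The main step is a gap count on that deficient cycle. Its $m/2$ unselected vertices are distributed among the $m/2-1$ gaps between consecutive selected vertices, and each gap contains at least one. Hence exactly one gap contains two unselected vertices and every other gap contains exactly one. A vertex $u$ with both neighbours unselected must have at least two unselected vertices on each side within its gap, so it needs a gap of size at least $3$. Since no such gap exists, the intersection would be empty.

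This contradicts the assumption of (ii), so I expect to have misstated the definitions, and I will recheck them. $T_2^+(A)$ consists of the vertices $x\notin A$ with $x+2\notin A$. In the step-$2$ cycle this means $x$ and its forward neighbour are both unselected. Likewise $T_2^-(A)$ means $x$ and its backward neighbour are both unselected. So $u$ needs $u-2,u,u+2$ all unselected: three consecutive unselected vertices, i.e. a gap of size at least $3$, as I had it.

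So on the deficient cycle (size $m/2-1$) the intersection is empty, and a size-$m/2$ trace leaves it empty as well. The total is therefore empty whenever the traces have sizes $m/2$ and $m/2-1$. The other possible split is $|A_0|+|A_1|=m-1$ with one trace of size $m/2-2$ and the other of size $m/2+1$. That split cannot happen, because $m/2+1$ exceeds the independence number $m/2$.

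My reading therefore seems to make (ii) vacuous, and then the uniqueness statement and $F+2=F^c$ would hold trivially. The discrepancy probably comes from the indexing convention of $T_2^\pm$. Before writing this up I will recheck the definitions carefully against Lemma~\ref{lem:s2-adjacent-lock}.

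If the lemma is meant non-vacuously, I take it to mean that $u$ fills the unique double gap. Then $F=A\cup\{u\}$ has traces that are the colour classes, and within each orbit $F+2$ is the complementary colour class, which gives $F+2=F^c$.

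The real obstacle is pinning down the exact gap structure that yields a nonempty intersection. I expect the correct count to use the fact that $T_2^\pm$ require only one side each but are intersected at $u$. I will redo this bookkeeping first.
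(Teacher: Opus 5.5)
Your part (i) is correct, and a bit slicker than the paper's argument. A vertex of $T_2^+(A)\cap T_2^-(A)$ could be added to $A$ while keeping it independent, which contradicts the fact that both traces are already maximum in odd $m$-cycles. The paper instead locates the two tails at the two ends of the unique double gap.

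Part (ii) has a genuine gap, caused by a miscount. On the deficient cycle there are $m/2-1$ selected vertices out of $m$. So there are $m-(m/2-1)=m/2+1$ unselected vertices, not $m/2$. Distributing $m/2+1$ zeros into $m/2-1$ nonempty gaps leaves two excess zeros. Hence either one gap has length $3$ and all others length $1$, or exactly two gaps have length $2$. In the first pattern, the middle vertex of the length-$3$ gap lies in $T_2^+(A)\cap T_2^-(A)$, and it is the only such vertex. In the second pattern the intersection is empty. This is the paper's count.

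So (ii) is not vacuous. For example, take $m=4$ and $A=\{0,1,4\}\subseteq\Z_8$. Then $T_2^+(A)=\{3,5\}$ and $T_2^-(A)=\{5,7\}$, and $F=\{0,1,4,5\}$ satisfies $F+2=\{2,3,6,7\}=F^c$. Your fallback guess that $u$ ``fills the unique double gap'' is also wrong: no vertex of a length-$2$ gap has both step-$2$ neighbours unselected. As written, the proposal gives no proof of (ii).

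You can repair this with your own part-(i) idea instead of the gap count. Suppose $u$ lies in the intersection. Then $F=A\cup\{u\}$ is independent of size $m$, which is the independence number of two disjoint even $m$-cycles. Hence each trace of $F$ is one of the two alternating colour classes of its cycle. Translation by $2$ swaps the two colour classes on each cycle, so $F+2=F^c$.

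For uniqueness, you already showed that $u$ must lie on the deficient cycle. Suppose $u'$ is another vertex of the intersection. Then $A_{1-\epsilon}\cup\{u\}$ and $A_{1-\epsilon}\cup\{u'\}$ are both colour classes containing $A_{1-\epsilon}$. This set is nonempty, since $|A_{1-\epsilon}|=m/2-1\ge1$ when $m$ is even and $m\ge3$. So the two classes coincide, and $u=u'$ because neither vertex lies in $A$.
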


\begin{proof}
The step-$2$ graph on $\Z_{2m}$ has two components, the even and odd cycles, each of length $m$.  The set $A$ is independent in this graph.  If $m$ is odd, the total independence number is $m-1$, so $A$ is maximum in both odd components.  In an odd cycle, a maximum independent set has one gap of two consecutive unselected vertices; the forward and backward tails are the two different ends of that gap.  Hence the two tail sets are disjoint.

If the intersection is nonempty, then $m$ is even.  Write $m=2t$.
The total independence number of the two even components is $m$, whereas
$A$ has size $m-1$.  Thus one component is a maximum alternating set of
size $t$, and the other contains $t-1$ selected and $t+1$ unselected
vertices.  In the deficient component, the $t-1$ cyclic zero-gaps between
successive selected vertices are all nonempty and contain, in total,
$t+1$ zeros.  Hence the two excess zeros occur in exactly one of two ways:
either one zero-gap has length $3$, or two zero-gaps have length $2$.

A vertex belongs to $T_2^+(A)\cap T_2^-(A)$ precisely when it is an
unselected vertex whose two step-$2$ neighbors are also unselected.  Such
a vertex exists only in the first gap pattern, and then it is the unique
middle vertex $u$ of the length-$3$ zero-gap.  Adding $u$ turns the
deficient component into a maximum alternating set; the other component
was already alternating.  Therefore, with $F=A\cup\{u\}$, translation by
$2$ exchanges selected and unselected vertices on both components, so
$F+2=F^c$.
\end{proof}

\begin{lemma}[Adjacent-row reversal lock]\label{lem:reversal-lock}
Assume Setup~\ref{setup:even-outside}.  If no Berge cycle of length $2m-1$ exists, then for every $z\in\Z_{2m}$,
\[
        e_z\cap(e_{z-1}+1)\subseteq\{z,z+1\}.
\]
\end{lemma}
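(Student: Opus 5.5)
The plan is to argue by contraposition with an explicit construction. Assume that for some $z$ there is a vertex $a\in e_z$ with $a-1\in e_{z-1}$ and $a\notin\{z,z+1\}$. From this I will build a Berge cycle of length $2m-1$ that uses only Hamiltonian labels and omits the single vertex $z$. This contradicts the hypothesis, so the containment follows.

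First rotate so that $z=0$; then $a\notin\{0,1\}$, so we may take the integer representative $2\le a\le 2m-1$. Note that $e_{-1}=e_{2m-1}$ contains the adjacent pair $\{-1,0\}$ and $e_0$ contains $\{0,1\}$. Consider the vertex sequence
\[
        1,2,\ldots,a-1,\ \ 2m-1,2m-2,\ldots,a,\ \ 1 .
\]
It traverses the Hamiltonian arc from $1$ up to $a-1$ and then jumps from $a-1$ to $2m-1\equiv-1$. Next it traverses the arc from $-1$ down to $a$, and finally jumps from $a$ back to $1$.

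The labels are assigned as follows.
\begin{itemize}[label=\textbullet]
\item The first arc uses $e_1,\ldots,e_{a-2}$.
\item The jump $a-1\to-1$ uses $e_{-1}$, which contains both $a-1$ and $-1$.
\item The descending arc uses $e_{2m-2},\ldots,e_a$.
\item The closing jump $a\to1$ uses $e_0$, which contains both $a$ and $1$.
\end{itemize}
All labels are distinct: the only Hamiltonian label not used is $e_{a-1}$. The vertices are exactly $\Z_{2m}\setminus\{0\}$, so the cycle has length $2m-1$.

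The only step needing care is the degenerate endpoints, so I will check them explicitly.
\begin{itemize}[label=\textbullet]
\item When $a=2$, the first arc is the single vertex $1$, and the jump $1\to-1$ via $e_{-1}$ is valid since $1=a-1\in e_{-1}$.
\item When $a=2m-1$, the descending arc is the single vertex $-1$, and the closing jump $-1\to1$ via $e_0$ is valid since $a=-1\in e_0$.
\end{itemize}
In every case the sequence is a genuine cyclic Berge cycle with distinct vertices, because $m\ge5$ gives at least three vertices. The excluded values $a\in\{0,1\}$ are exactly those for which the construction collapses. Hence no such $a$ exists, and
\[
        e_z\cap(e_{z-1}+1)\subseteq\{z,z+1\}.
\]
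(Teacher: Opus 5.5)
Your proof is correct and is essentially the paper's argument: after rotating to $z=0$, your cycle $1,\ldots,a-1,-1,\ldots,a,1$ is exactly the paper's cycle $x,x+1,\ldots,z-1,w,w-1,\ldots,z+1,x$ (with $x=a$, $w=a-1$) traversed in the opposite direction, using $e_{z-1}$ and $e_z$ as the two chords and omitting the vertex $z$. Your explicit label accounting (only $e_{a-1}$ is unused) and your checks of the degenerate cases $a=2$ and $a=2m-1$ make precise what the paper leaves implicit.
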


\begin{proof}
If $x\in e_z\cap(e_{z-1}+1)$ and $x\notin\{z,z+1\}$, put $w=x-1\in e_{z-1}$.  The sequence
\[
        x,x+1,\ldots,z-1,w,w-1,\ldots,z+1,x
\]
uses all vertices except $z$, with chords supplied by $e_{z-1}$ and $e_z$.  This is a Berge cycle of length $2m-1$.
\end{proof}

\begin{theorem}[Next-to-Hamiltonian length]\label{thm:next-to-hamiltonian}
Let $m\ge12$.  Every setup satisfying Setup~\ref{setup:even-outside} contains a Berge cycle of length $2m-1$.
\end{theorem}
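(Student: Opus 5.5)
Assume for contradiction that no Berge cycle of length $2m-1$ exists. By the chord lemma (Lemma~\ref{lem:chord}) applied with $s=2$, the extra edge $f$ must miss cyclic distance $2$, so $f\cap(f+2)=\varnothing$. The same holds after any re-labelling: whenever $f$ is placed on an adjacent pair $\{p,p+1\}\subseteq f$, the freed edge $e_p$ also misses distance $2$. The plan is to push these constraints until the $(m-1)$-uniformity makes them incompatible.

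First I would show that $f$ contains an adjacent pair. The set $f$ has size $m-1$ and is independent in the step-$2$ graph. If $f$ also contained no adjacent pair, it would be independent in the step-$1$ graph, hence of size at most $m$ with very rigid structure. An $(m-1)$-set avoiding both distances $1$ and $2$ has size at most $\lfloor 2m/3\rfloor<m-1$ once $m\ge 4$. So some $p,p+1\in f$ exists.

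Lemma~\ref{lem:s2-adjacent-lock} then gives $e_p\setminus f\subseteq T_2^+(f)\cap T_2^-(f)$. Split according to Lemma~\ref{lem:s2-defect-form}. If $m$ is odd, or if the tail intersection is empty, then $e_p\subseteq f$. Since both sets have size $m-1$, this gives $e_p=f$, which contradicts simplicity. If $m$ is even, then $|e_p\setminus f|\le1$, so $e_p=(f\setminus\{a\})\cup\{u\}$ for the unique vertex $u$ with $F=f\cup\{u\}$ satisfying $F+2=F^c$. In other words, $F$ is a union of two alternating classes mod $2$ in the step-$2$ graph. The pattern of $F$ is then periodic with period $4$, of the form $\{x: x\equiv c, c+1 \pmod 4\}$ up to shift. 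In particular, $f$ contains many adjacent pairs $\{q,q+1\}$: roughly $m/2-1$ of them, all those inside $F$ not meeting $u$.

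Next, apply the same lock at every such pair $q$. Each $e_q$ must equal $F\setminus\{a_q\}$ for some $a_q\ne u$, with $u\in e_q$. Thus all these Hamiltonian edges lie inside the single $m$-set $F$, and simplicity forces the $a_q$ to be distinct. Now bring in the reversal lock, Lemma~\ref{lem:reversal-lock}. The edges $e_q$ and $e_{q+4}$ are not adjacent, so I would use instead the pairs $\{q,q+1\}$ and $\{q+1,q+2\}$, looking at $e_{q+1}$ and $e_q$. The set $e_q\cap(e_{q-1}+1)$ must lie in $\{q,q+1\}$. For consecutive adjacent pairs, however, $F$ has only runs of length $2$, so I would rather use the boundary bypass (Lemma~\ref{lem:boundary-bypass}) directly. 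Because $e_q\supseteq F\setminus\{a_q\}$ contains vertices of the form $q-d$ with $d=1$ impossible, i.e.\ $q-1\notin e_q$, and $q+2\notin e_q$, we should find that $F$ contains $q-1$ or $q+2$ only if that vertex equals $a_q$. Since $F$ has period-$4$ runs of length exactly $2$, the vertices $q-1$ and $q+2$ lie outside $F$ automatically. So the bypass with larger $d$ is what carries the argument: for each $d$ with $q-d\in F\setminus\{a_q\}$, Lemma~\ref{lem:boundary-bypass} yields a cycle of length $2m-d$, and in particular $d=1$ gives $2m-1$. Hence I would instead directly construct a $(2m-1)$-cycle via a two-gap certificate, Proposition~\ref{prop:free-label}, with $d=2,e=1$ or the analogous three-step reroutes. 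Here the free label $f$ and the many labels $e_q\subseteq F$, which contain pairs at distance $2$ only across the alternating structure (namely $\{x,x+4\}$, $\{x, x+1\}$), are played against each other. Concretely, $F$ contains $\{q-3,q\}$ (distance $3$) and $\{q-4,q\}$. I would use $e_q$ to realize a bridge of span $3$ and $f$ on $\{p,p+1\}$, with one gap of length $2$ around vertex $q$, and check that this omits exactly one vertex.

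The main obstacle is this final even-$m$ periodic case. The structure is forced to be $F=$ period-$4$ pattern, and I need one explicit rerouting using two of the edges $e_q\subseteq F$ plus $f$ that drops exactly one vertex. I expect to combine Lemma~\ref{lem:reversal-lock} for a pair $e_q,e_{q-1}$ where $q-1\in f$ is the left end of a run with the many available $e_q$. This is where $m\ge12$ is needed: it guarantees enough runs, at least three distinct $a_q$, so that some $e_q$ retains the required vertices.
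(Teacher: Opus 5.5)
Your reduction is correct up to the periodic case and matches the paper. You show that $f$ misses distance $2$ and contains an adjacent pair $\{p,p+1\}$. Lemmas~\ref{lem:s2-adjacent-lock} and~\ref{lem:s2-defect-form} then force $m$ even and $F=f\cup\{u\}$ with $F+2=F^c$. Your further observation that every label on an adjacent pair inside $f$ equals $F\setminus\{a_q\}$ is also correct. But the argument stops exactly where the contradiction must be produced, and none of the tools you try can produce it:
\begin{itemize}
\item the boundary bypass with $d=1$ is vacuous here;
\item the bypass with larger $d$ gives lengths $2m-d\ne2m-1$;
\item a span-$3$ bridge omits two vertices;
\item a two-gap certificate with $(d,e)=(2,1)$ needs a label containing a distance-$2$ pair, which no $F\setminus\{a_q\}$ has.
\end{itemize}
The structure you extracted does not suffice either. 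The labels $F\setminus\{a_q\}$ with distinct $a_q$ give no counting contradiction: there are $m/2-1$ such labels and $m-1$ choices of $a_q$. You are right that the relevant pair is $e_{q-1},e_q$ with $q-1=p$ the left end of a run. But for that pair the reversal lock only gives $e_q\subseteq(\Z_{2m}\setminus(F+1))\cup\{\rho+1,q,q+1\}$, where $e_p=F\setminus\{\rho\}$. This set has size up to $m+3$, so no contradiction follows. Consequently the role you assign to $m\ge12$ (``at least three distinct $a_q$'') is not the operative one.

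The missing idea is a second, $f$-based confinement of $e_q$, where $q=p+1$: namely $e_q\setminus f\subseteq\{q+1\}\cup T_2^-(f)$. To prove it, rotate $q$ to $0$ and suppose $a\in e_q\setminus f$, $a-2\in f$, and $a\ne q+1$. The cases $a=2$ and $a=2m-1$ are excluded by the boundary bypass. Otherwise the sequence $0,2m-1,\ldots,a,1,2,\ldots,a-2,0$ is a Berge cycle omitting only $a-1$. It uses $e_0$ on the jump $a\to1$ and $f$ on the jump $a-2\to0$, which is legal because $q=p+1\in f$. From $F-2=F^c$ one gets $T_2^-(f)=\{u,u+2\}$, so $e_q\subseteq f\cup\{q+1,u,u+2\}$. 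Now intersect this with the reversal-lock containment above. Since $F$ is a union of two adjacent residue classes modulo $4$, we have $|F\setminus(F+1)|=m/2$, exactly one of $u,u+2$ lies in $F+1$, and $q+1\in F+1$. This yields $|e_q|\le m/2+4$. That contradicts $|e_q|=m-1$ precisely when $m>10$, which for even $m$ is the hypothesis $m\ge12$.
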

\begin{proof}
Assume no such cycle exists.  If $f$ realizes distance $2$, the chord lemma gives length $2m-1$, so $f\cap(f+2)=\varnothing$.  Since $f$ also misses distance $2$, absence of an adjacent pair would make every cyclic gap between selected vertices at least $3$, forcing $|f|\le\lfloor2m/3\rfloor<m-1$, a contradiction.  Choose $p$ with $p,p+1\in f$.

By Lemma~\ref{lem:s2-adjacent-lock}, $e_p\setminus f\subseteq T_2^+(f)\cap T_2^-(f)$.  Since $e_p\ne f$ and $|e_p|=|f|$, this set is nonempty.  Lemma~\ref{lem:s2-defect-form} gives that $m$ is even and that there is a unique vertex $u$ such that, with $F=f\cup\{u\}$, one has $F+2=F^c$.  Moreover $e_p=F\setminus\{\rho\}$ for some $\rho\in f\setminus\{p,p+1\}$.

Put $q=p+1$.  We claim that
\begin{equation}\label{eq:reverse-one-sided-lock}
        e_q\setminus f\subseteq\{q+1\}\cup T_2^-(f).
\end{equation}
Indeed, suppose that $a\in e_q\setminus f$, that $a-2\in f$, and that
$a\ne q+1$.  The case $a=q+2$ is excluded by
Lemma~\ref{lem:boundary-bypass} with $d=1$.  The case $a=q-1$ is excluded
by the other direction of the same lemma.  Rotate so that $q=0$; then we
may take $3\le a\le2m-2$.  The cyclic vertex sequence
\[
 0,\ 2m-1,\ 2m-2,\ldots,a,\ 1,\ 2,\ldots,a-2,\ 0
\]
contains every vertex except $a-1$.  Use the original Hamiltonian labels
on the two displayed arcs, use $e_0=e_q$ on the jump $a\to1$, and use $f$
on the jump $a-2\to0$.  The two arc-label intervals are disjoint, neither
contains $e_0$, and $f\notin E(C)$; hence all labels are distinct.  This is
a Berge cycle of length $2m-1$, a contradiction.  Thus
\eqref{eq:reverse-one-sided-lock} holds.

We next determine the backward tail set exactly.  Complementing
$F+2=F^c$ and using translation invariance of complements gives
\[
        F=(F+2)^c=F^c+2,
        \qquad\text{hence}\qquad F-2=F^c.
\]
Since $u\in F$, it follows that $u-2\in F^c$, while $u\notin f$; hence
$u\in T_2^-(f)$.  If $x\in T_2^-(f)$ and $x\ne u$, then
$x\in F^c=F+2$, so $x-2\in F$.  The condition $x-2\notin f=F\setminus\{u\}$
therefore forces $x-2=u$, that is, $x=u+2$.  Conversely $u+2\in F^c$
and $(u+2)-2=u\notin f$.  Thus
\[
        T_2^-(f)=\{u,u+2\}.
\]
Consequently
\begin{equation}\label{eq:next-hamiltonian-first-containment}
        e_q\subseteq f\cup\{q+1,u,u+2\}.
\end{equation}
By Lemma~\ref{lem:reversal-lock} applied to $z=q$,
\begin{equation}\label{eq:next-hamiltonian-second-containment}
        e_q\subseteq \bigl(\Z_{2m}\setminus(F+1)\bigr)
             \cup\{\rho+1,q,q+1\}.
\end{equation}
Because $F+2=F^c$, we have $F+4=F$.  Thus $F$ is a union of two residue classes modulo $4$, one even and one odd.  Since $p,p+1\in F$, these classes are adjacent, and $|F\setminus(F+1)|=m/2$.  Moreover
\[
        (F+1)+2=(F+2)+1=F^c+1=(F+1)^c.
\]
Hence translation by $2$ exchanges $F+1$ with its complement, so every pair
$\{x,x+2\}$ has exactly one member in $F+1$.  In particular, at most one
of $u,u+2$ lies outside $F+1$.

Intersecting \eqref{eq:next-hamiltonian-first-containment} and \eqref{eq:next-hamiltonian-second-containment}, the nonexceptional part contributes at most $m/2+1$ vertices: at most $m/2$ from $f\setminus(F+1)$ and at most one from $\{u,u+2\}\setminus(F+1)$, while $q+1\in F+1$.  The exceptional set $\{\rho+1,q,q+1\}$ contributes at most three more.  Therefore
\[
        |e_q|\le m/2+4.
\]
Since $|e_q|=m-1$, this gives $m\le10$, contradicting $m\ge12$.
\end{proof}

\begin{theorem}[Two-gap stability]\label{thm:two-gap-stability}
Let $A\subseteq\Z_{2m}$ satisfy $|A|=m-1$.  Suppose $1\le d\le m-2$ and
\[
        d,d+1\notin\Delta(A).
\]
Then
\[
        m\text{ is odd},\qquad d=\frac{m-1}{2},\qquad A=A+m.
\]
\end{theorem}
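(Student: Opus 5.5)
The plan is to turn the two missing distances into a sumset statement and then count the maximal runs of $A$. The hypothesis $d,d+1\notin\Delta(A)$ says $A\cap(A+d)=A\cap(A+d+1)=\varnothing$, that is,
\[
        A+\{d,d+1\}\subseteq A^c,
        \qquad |A^c|=m+1 .
\]
Since $|A|=m-1\ge1$ and $A\ne\Z_{2m}$, we can decompose $A$ into $b\ge1$ maximal cyclic intervals (runs). Each run of length $a$ contributes a run of length $a+1$ to $A+\{0,1\}$, so $|A+\{0,1\}|=m-1+b$. The set $A+\{d,d+1\}$ is the translate $(A+\{0,1\})+d$, so the containment gives $b\le2$. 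I do not expect Kneser's theorem (Theorem~\ref{thm:kneser}) to be needed; this direct run count should suffice.

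\emph{Case $b=1$.} Rotate so that $A=[0,m-2]$. Then the difference set $A-A$ is the cyclic interval $[-(m-2),m-2]$, which contains every $d\le m-2$. This contradicts $d\notin\Delta(A)$, so this case cannot occur.

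\emph{Case $b=2$.} Now $|A+\{d,d+1\}|=m+1=|A^c|$, so $A+\{d,d+1\}=A^c$. The hypothesis is symmetric under negation: $A\cap(A-d)=\varnothing$ as well. The same counting applied to $A-\{d,d+1\}$, which is a translate of $A+\{0,1\}$ and also has size $m+1$, gives $A-\{d,d+1\}=A^c$. Put $B=A+\{0,1\}$. Then $B+d=A^c=B-(d+1)$, so
\[
        B+(2d+1)=B.
\]
$B$ is a proper nonempty subset of $\Z_{2m}$ and is a union of cosets of $\langle 2d+1\rangle$. Let $g=\gcd(2d+1,2m)$. Then $B$ is $g$-periodic, so $B$ has a positive multiple of $2m/g$ runs. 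Since $B$ has exactly two runs, $2m/g\in\{1,2\}$.

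The option $g=2m$ is impossible, because $2d+1$ is odd and $0<2d+1<2m$. Hence $g=m$. Since $0<2d+1\le 2m-3$ and $m\mid 2d+1$, this forces $2d+1=m$. So $m$ is odd and $d=(m-1)/2$.

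Finally, translation by $m$ fixes $B$ and hence permutes its two runs. Each run of $A$ is obtained from the corresponding run of $B$ by deleting its clockwise endpoint, and this deletion commutes with translation. Therefore $A+m=A$.

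The step needing the most care is the passage from the two-run structure to $B+(2d+1)=B$. It relies on using both orientations ($A\pm\{d,d+1\}$) and on the equality case of the run count, so I would check the identity $A+\{d,d+1\}=A^c$ and its negated version carefully. The remaining steps are bookkeeping with periodic sets.
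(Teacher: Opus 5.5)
Your proof is correct, and it takes a genuinely different route from the paper.

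The paper applies Kneser's theorem to the difference set $A-A$. The complement of $A-A$ contains $\{\pm d,\pm(d+1)\}$ and is periodic under $H=\Stab(A-A)$. This forces $|H|=2$, $H=\{0,m\}$, and equality $|A+H|=|A|$, hence $A=A+m$. Passing to $\Z_{2m}/H\cong\Z_m$, negation symmetry of the missing set then gives $2d+1\equiv0\pmod m$.

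You instead exploit the fact that $\{d,d+1\}$ is a translate of $\{0,1\}$. This makes $|A+\{d,d+1\}|=|A|+b$ an exact, elementary count in terms of the number $b$ of runs. Tightness in both orientations yields the periodicity $B+(2d+1)=B$, and the run count of a periodic set finishes the argument.

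One small point deserves a sentence. A priori $B=A+\{0,1\}$ could have fewer runs than $A$, since two runs of $A$ separated by a single gap vertex merge. However, $B+d=A^c$ and $A^c$ has exactly two runs, so $B$ has exactly two runs, as you use.

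Your argument is self-contained and yields slightly more. Translation by $m$ cannot fix a proper interval, so it swaps the two runs of $A$. Each run therefore has length $(m-1)/2$. This is precisely the normal form $f=[0,r-1]\cup[2r+1,3r]$ that the paper later rederives in Theorem~\ref{thm:antipodal-two-block} via the antipodal quotient and Lemma~\ref{lem:odd-distance}. The Kneser route, by contrast, is the standard additive-combinatorial tool. It leans less on the specific consecutive structure of the two missing distances; it only needs $d$ and $d+1$ to lie in distinct $H$-cosets.
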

\begin{proof}
The assumptions say that $A-A$ avoids $T=\{\pm d,\pm(d+1)\}$.  Let $H=\Stab(A-A)$ and $|H|=h$.  By Kneser's theorem,
\[
        |A-A|\ge 2|A+H|-|H|\ge 2(m-1)-h.
\]
Hence the complement $C=\Z_{2m}\setminus(A-A)$ has size at most $h+2$.  Since $A-A$ is $H$-periodic, so is $C$, and $T+H\subseteq C$.

If $h=1$, then $|C|\le3$, impossible because $T$ has four elements.
Thus $h\ge2$.  Moreover $1\notin H$: if $1\in H$, then the subgroup $H$
is all of $\Z_{2m}$, so the nonempty $H$-periodic set $A-A$ is all of
$\Z_{2m}$, contrary to its avoidance of $T$.  Hence the cosets $d+H$ and
$d+1+H$ are distinct, and $|T+H|\ge2h$.  Since
\[
        2h\le |T+H|\le |C|\le h+2,
\]
we obtain $h=2$.  The unique subgroup of order $2$ is $H=\{0,m\}$, and
all inequalities above are equalities.  In particular,
\[
        |C|=4,
        \qquad C=(d+H)\cup(d+1+H),
        \qquad |A-A|=2m-4.
\]
Kneser's inequality now reads
\[
        2m-4=|A-A|\ge2|A+H|-2\ge2|A|-2=2m-4,
\]
so equality throughout gives $|A+H|=|A|=m-1$.
Passing to the quotient $\Z_{2m}/H\cong\Z_m$, symmetry under negation gives
\[
        \{d,d+1\}=\{-d,-d-1\}.
\]
The only elements of $\Z_m$ fixed by negation are $0$ and, when $m$ is
even, $m/2$.  Since $d$ and $d+1$ are distinct and neither is $0$ in the
stated range, they cannot both be fixed.  They must therefore be exchanged,
so
$2d+1\equiv0\pmod m$.  The range $1\le d\le m-2$ then forces $m$ odd and
$d=(m-1)/2$.

Since $A\subseteq A+H$ and the two sets have equal cardinality, $A=A+H=A+m$.
\end{proof}

For the remainder of this subsection fix an integer $s$ with
$2\le s\le m-1$, put
\[
        A:=f,
\]
and assume
\[
        |A|=m-1,
        \qquad A\cap(A+s)=\varnothing.
\]
For every row $p\in\Z_{2m}$ define
\begin{equation}\label{eq:split-GH-definitions}
        G_p:=A\cap[p+s+1,p]^+,
        \qquad
        H_p:=A\cap[p+1,p-s]^+.
\end{equation}
The sets $G_p$ and $H_p$ record, respectively, the possible endpoints of
the row-first and $f$-first split constructions.

\begin{lemma}[Long split locks]\label{lem:long-locks}
Assume no Berge cycle of length $2m-s+1$ exists.  Let $D,R\ge1$ with $D+R=s$.
\begin{enumerate}[label=\textup{(\roman*)}]
\item If $p+D\in A$, then $e_p\cap(G_p+D-s-1)=\varnothing$.
\item If $p-R\in A$, then $e_p\cap(H_p+s-R)=\varnothing$.
\end{enumerate}
\end{lemma}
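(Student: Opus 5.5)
The plan is to prove each part by contradiction: from a vertex violating the claimed disjointness I will build an explicit Berge cycle of length $2m-s+1$. The cycle uses $e_p$ as one chord and $f=A$ as the other, in the same spirit as the crossing constructions in Lemma~\ref{lem:s2-adjacent-lock} and Lemma~\ref{lem:reversal-lock}. Such a cycle omits exactly $s-1$ vertices, and these will be distributed as $D-1$ vertices in one gap and $R$ in the other, or the reverse. The hypothesis $A\cap(A+s)=\varnothing$ is not needed for the construction; it only matters later, when the locks are applied.

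For (i), suppose $x\in e_p$ with $x=g+D-s-1=g-R-1$ for some $g\in G_p$. Since $g\in[p+s+1,p]^+$, we get $x\in[p+D,\,p-R-1]^+$, so cyclically
\[
 p+D\le x<x+1\le g\le p .
\]
Consider the vertex sequence
\[
 p+D,\ p+D+1,\ldots,x,\ \ p,\ p-1,\ldots,g,\ \ p+D .
\]
It uses the Hamiltonian labels $e_{p+D},\ldots,e_{x-1}$ on the forward arc and $e_g,\ldots,e_{p-1}$ on the backward arc. The two closing jumps are covered as follows:
\begin{enumerate}[label=\textup{(\alph*)}]
\item the jump $x\to p$ uses $e_p$, which is valid since $p,x\in e_p$;
\item the jump $g\to p+D$ uses $f$, which is valid since $g,p+D\in A$.
\end{enumerate}
The two label intervals are disjoint, neither contains $e_p$, and $f\notin E(C)$, so all labels are distinct. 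The omitted vertices are $p+1,\ldots,p+D-1$ and $x+1,\ldots,g-1$, which number $(D-1)+R=s-1$. Hence the cycle has length $2m-s+1$, a contradiction.

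For (ii), the construction is the mirror image. Suppose $x\in e_p$ with $x=h+s-R=h+D$ for some $h\in H_p\subseteq[p+1,p-s]^+$. Then $x\in[p+1+D,\,p-R]^+$. Take the sequence
\[
 p-R,\ p-R-1,\ldots,x,\ \ p+1,\ p+2,\ldots,h,\ \ p-R .
\]
The backward arc from $p-R$ to $x$ uses labels $e_x,\ldots,e_{p-R-1}$, and the forward arc from $p+1$ to $h$ uses $e_{p+1},\ldots,e_{h-1}$. The jump $x\to p+1$ uses $e_p$, and the jump $h\to p-R$ uses $f$. The omitted vertices are $p-R+1,\ldots,p$ and $h+1,\ldots,x-1$, which number $R+(D-1)=s-1$. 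Again this gives a cycle of length $2m-s+1$, a contradiction.

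The main obstacle I expect is bookkeeping at the degenerate boundaries rather than any real difficulty. These are the cases where an arc collapses to a single vertex ($x=p+D$ or $g=p$ in (i); $x=p-R$ or $h=p+1$ in (ii)), and the extreme values $D=1$ or $R=1$. In each of these cases I will check three things:
\begin{enumerate}[label=\textup{(\alph*)}]
\item the vertex sequence is still a single cycle with distinct vertices;
\item the cyclic order of the four turning points is as stated;
\item the two label index intervals stay disjoint and avoid $p$.
\end{enumerate}
The last check is the one that needs the exact endpoint conventions in the definitions \eqref{eq:split-GH-definitions} of $G_p$ and $H_p$.
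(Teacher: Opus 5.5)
Your proposal is correct and is essentially the paper's proof: in (i) the cycle $p+D,\ldots,x,\,p,p-1,\ldots,g,\,p+D$ uses $e_p$ on $x\to p$ and $f$ on $g\to p+D$, and in (ii) the cycle $p-R,\ldots,x,\,p+1,\ldots,h,\,p-R$ uses $e_p$ on $x\to p+1$ and $f$ on $h\to p-R$, each omitting $(D-1)+R=s-1$ vertices exactly as in the paper. The single-vertex arcs you flag are handled in the paper just by the empty-interval convention (e.g.\ the lift value $y=2m$ when $p=0$), and you are right that $A\cap(A+s)=\varnothing$ plays no role in this lemma.
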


\begin{proof}
For (i), suppose that $x\in e_p$ and
$x=y+D-s-1$ for some $y\in G_p$.  Since $D+R=s$, we have
$y=x+R+1\in A$, while the hypothesis gives $p+D\in A$.  After translating
$p$ to $0$, represent $y$ in the integer interval $[s+1,2m]$.  All arcs in
this proof are read in this chosen integer lift.  In particular,
$0,-1,\ldots,y$ means the descending cyclic arc
$0,2m-1,\ldots,y$; when $y=2m$, it is the singleton vertex $0$ and its
Hamiltonian label interval is empty.  Now
$x=y-R-1\in[D,2m-R-1]$.  Consequently the two Hamiltonian arcs
\[
        D,D+1,\ldots,x
        \quad\text{and}\quad
        0,-1,\ldots,y
\]
are vertex-disjoint.  Their Hamiltonian label sets are the ordinary
intervals $[D,x-1]$ and $[y,2m-1]$, respectively; by the convention in
Section~\ref{sec:prelim}, either interval is empty when its arc has one
vertex.  These intervals are disjoint and neither contains $0$.  Close the
first arc to $0$ with the free label $e_0$, and close the second arc to $D$
with the additional edge $f$.  The two special labels are
distinct from each other and from all arc labels.  The resulting cycle is
\[
        D,D+1,\ldots,x,\ 0,-1,\ldots,y,\ D
\]
and has
\[
 (x-D+1)+(2m-y+1)=2m-(D+R)+1=2m-s+1
\]
vertices, a contradiction.

For (ii), translate $p$ to $0$ and suppose that
$x\in e_0\cap(H_0+s-R)$.  Since $s-R=D$, write
\[
        x=y+D\qquad\text{with }y\in H_0.
\]
Choose the representative $y\in[1,2m-s]$.  Then
$x\in[D+1,2m-R]$.  The two vertex-disjoint Hamiltonian arcs
\[
        -R,-R-1,\ldots,x
        \qquad\text{and}\qquad
        1,2,\ldots,y
\]
have disjoint label sets: the descending arc uses the ordinary interval
$[x,2m-R-1]$, while the ascending arc uses $[1,y-1]$.  Neither interval
contains $0$, and reversed endpoint intervals are empty by convention.
Use the free label $e_0$ on
the jump $x\to1$ and the additional edge $f$ on the jump $y\to-R$; the
latter is available because $y,-R\in A$.  Thus
\[
        -R,-R-1,\ldots,x,1,2,\ldots,y,-R
\]
is a Berge cycle with
\[
        (2m-R-x+1)+y=2m-(D+R)+1=2m-s+1
\]
vertices, a contradiction.
\end{proof}

\begin{lemma}[Short split locks]\label{lem:short-locks}
Assume no Berge cycle of length $s+1$ exists.  Let $D,R\ge1$ with $D+R=s$.
\begin{enumerate}[label=\textup{(\roman*)}]
\item If $p+D\in A$, then $e_p\cap(G_p+D-s)=\varnothing$.
\item If $p-R\in A$, then $e_p\cap(H_p+s-R-1)=\varnothing$.
\end{enumerate}
\end{lemma}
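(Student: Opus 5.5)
The plan is to mirror the proof of Lemma~\ref{lem:long-locks}. In each case I assume the forbidden intersection is nonempty and build a Berge cycle of length $s+1$ from two short vertex-disjoint Hamiltonian arcs. The arcs are closed by the free label $e_p$ (which contains $p$ and $p+1$) and by the additional edge $f$, which supplies a chord between two points of $A$. Throughout, I translate $p$ to $0$ and work in a fixed integer lift, as in the long-lock proof. A label interval is empty when its arc has one vertex.

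For (i), suppose $x\in e_0$ with $x=y+D-s=y-R$ for some $y\in G_0$. Represent $y$ in the integer interval $[s+1,2m]$, so that $x\in[D+1,2m-R]$. I would use the two ascending arcs
\[
        1,2,\ldots,D
        \qquad\text{and}\qquad
        x,x+1,\ldots,y.
\]
Their label sets are $[1,D-1]$ and $[x,y-1]$. These are disjoint because $x\ge D+1$, and neither contains $0$, including when $y=2m$, where the second label interval ends at $2m-1$. The two vertex sets are also disjoint: $x>D$, and when $y=2m$ the vertex $0$ is not among $1,\ldots,D$.

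The cycle is
\[
        1,\ldots,D,\ y,y-1,\ldots,x,\ 1.
\]
The jump $D\to y$ uses $f$, which is legitimate because $D=p+D\in A$ and $y\in A$. The jump $x\to1$ uses $e_0$, since $x,1\in e_0$. The vertex count is $D+(R+1)=s+1$, which contradicts the hypothesis.

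For (ii), suppose $x\in e_0$ with $x=y+s-R-1=y+D-1$ for some $y\in H_0$, and represent $y\in[1,2m-s]$. Then $x\in[D,2m-R-1]$. I would take the arcs
\[
        0,-1,\ldots,-R
        \qquad\text{and}\qquad
        y,y+1,\ldots,x.
\]
Their label sets are $[2m-R,2m-1]$ and $[y,x-1]$. These are disjoint, avoid $0$, and the two vertex sets are disjoint.

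The cycle is
\[
        0,-1,\ldots,-R,\ y,\ldots,x,\ 0.
\]
The jump $-R\to y$ uses $f$, since $-R\in A$ and $y\in A$. The jump $x\to0$ uses $e_0$. The length is $(R+1)+D=s+1$, again a contradiction.

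The only real work is the bookkeeping at the boundary of the chosen lifts. One must check that the arcs stay vertex-disjoint and that their label intervals never contain $e_0$. This includes the degenerate cases $D=1$, $R=1$, $y=2m$ in (i), and $x=D$ (that is, $y=1$) in (ii), where an arc shrinks to a single vertex or becomes adjacent to the other arc. I expect this endpoint verification, rather than the construction itself, to be the main point requiring care. It is handled exactly as in the long-lock proof, using the convention that reversed intervals are empty.
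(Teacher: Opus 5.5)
Your proposal is correct and matches the paper's proof essentially verbatim. In (i) you use the same lift $y\in[s+1,2m]$ and the same arcs $1,\ldots,D$ and $y,\ldots,x$, closed by $f$ and $e_p$. In (ii) you use $y\in[1,2m-s]$ with arcs $0,\ldots,-R$ and $y,\ldots,x$, and your label-interval and vertex-disjointness checks are the same ones the paper makes.
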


\begin{proof}
For (i), suppose that $x\in e_p$ and $x=y+D-s$ with $y\in G_p$.
Then $y=x+R\in A$ and $p+D\in A$.  Translate $p$ to $0$ and represent
$y$ in $[s+1,2m]$.  All arcs in this paragraph are read in this integer
lift; in particular, $y=2m$ denotes the residue $0$, not a new vertex.
Since $x=y-R\ge D+1$, the two Hamiltonian arcs
\[
        1,2,\ldots,D
        \quad\text{and}\quad
        y,y-1,\ldots,x
\]
are vertex-disjoint.  Their label sets are the disjoint ordinary intervals
$[1,D-1]$ and $[x,y-1]$, neither of which contains $0$.  Use $f$ on the
jump $D\to y$ and use the free label $e_0$ on the jump $x\to1$.  All
labels are distinct, and the resulting cycle
\[
        1,2,\ldots,D,
        \ y,y-1,\ldots,x,
        \ 1
\]
has $D+(R+1)=s+1$ vertices, a contradiction.

For (ii), translate $p$ to $0$ and suppose that
$x\in e_0\cap(H_0+s-R-1)$.  Since $s-R=D$, write
\[
        x=y+D-1\qquad\text{with }y\in H_0.
\]
Choose $y\in[1,2m-s]$.  Then $x\le2m-R-1$.  The two vertex-disjoint
Hamiltonian arcs
\[
        0,-1,\ldots,-R
        \qquad\text{and}\qquad
        y,y+1,\ldots,x
\]
have disjoint label sets: the first uses the ordinary interval
$[2m-R,2m-1]$ and the second uses $[y,x-1]$.  Neither interval contains
$0$, and the second interval is empty when $x=y$.
Use $f$ on the jump $-R\to y$, since $-R,y\in A$, and use
the free label $e_0$ on the jump $x\to0$.  The resulting Berge cycle
\[
        0,-1,\ldots,-R,y,y+1,\ldots,x,0
\]
has $(R+1)+D=s+1$ vertices, a contradiction.
\end{proof}

\begin{lemma}[Paired interval count]\label{lem:GH-count}
For every $p$,
\[
        |G_p|+|H_p|\ge 2m-2-s.
\]
\end{lemma}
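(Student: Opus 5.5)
We need $|G_p|+|H_p|\ge 2m-2-s$, where $G_p=A\cap[p+s+1,p]^+$ and $H_p=A\cap[p+1,p-s]^+$, $|A|=m-1$, and $A\cap(A+s)=\varnothing$. The plan is to count through complements, since $G_p$ and $H_p$ are both intersections of $A$ with arcs of equal length.

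Translate so that $p=0$. The arc $[s+1,0]^+$ consists of $s+1,\dots,2m-1,0$, which is all residues except $1,\dots,s$. Hence
\[
 |G_0|=|A|-|A\cap[1,s]|.
\]
Likewise the arc $[1,-s]^+$ is everything except $-s+1,\dots,0$, an interval $W$ of $s$ consecutive residues, so
\[
 |H_0|=|A|-|A\cap[-s+1,0]|.
\]
Adding gives
\[
 |G_0|+|H_0|=2(m-1)-\bigl(|A\cap U|+|A\cap W|\bigr),
\]
where $U=[1,s]$ and $W=[1-s,0]$. Note that $W=U-s$.

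It therefore suffices to show $|A\cap U|+|A\cap W|\le s$. For this, pair each $x\in U$ with $x-s\in W$. These $s$ pairs are $\{x-s,x\}$, each at distance $s$, and they partition $U\cup W$, since $U$ and $W$ are disjoint intervals whose total length $2s\le 2m-2$ is less than $2m$. Because $A\cap(A+s)=\varnothing$, $A$ contains at most one element of each pair, so $|A\cap U|+|A\cap W|\le s$. This yields $|G_0|+|H_0|\ge 2m-2-s$. The same argument applies at every $p$ after translating.

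The only point needing care is the arc bookkeeping: checking that each arc has exactly $2m-s$ elements and that its complement is the stated length-$s$ interval. This must hold even at the extreme value $s=m-1$, and it does, because the intervals $U$ and $W$ never overlap. Nothing from earlier in the paper is needed beyond the definitions.
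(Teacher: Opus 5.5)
Your proof is correct and matches the paper's argument. The complements of the two arcs are the disjoint length-$s$ intervals $[p+1,p+s]^+$ and $[p-s+1,p]^+$, which translation by $s$ pairs off; the condition $A\cap(A+s)=\varnothing$ then bounds $A$'s intersection with their union by $s$, and this gives $|G_p|+|H_p|\ge 2|A|-s=2m-2-s$.
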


\begin{proof}
The two length-$s$ intervals
\[
        I_1=[p+1,p+s]^+,
        \qquad I_2=[p-s+1,p]^+
\]
are disjoint because $s\le m-1$, and translation by $s$ is a bijection
$I_2\to I_1$.  The assumption $A\cap(A+s)=\varnothing$ therefore implies
\[
        |A\cap(I_1\cup I_2)|\le s.
\]
By \eqref{eq:split-GH-definitions}, $G_p=A\setminus I_1$ and
$H_p=A\setminus I_2$.  Hence, using $|A|=m-1$,
\[
\begin{aligned}
 |G_p|+|H_p|
 &=2|A|-|A\cap(I_1\cup I_2)|\\
 &\ge2m-2-s.
\end{aligned}
\]
\end{proof}

\begin{proposition}[Isolated split-pair theorem]
\label{prop:bulk-split-pair}
Assume the even one-extra-edge setup.  Let $3\le s\le m-4$ and suppose
$f\cap(f+s)=\varnothing$.
\begin{enumerate}[label=\textup{(\roman*)}]
\item If $s+1\in\Delta(f)$, then the setup contains a Berge cycle of
length $2m-s+1$.
\item If $s-1\in\Delta(f)$, then the setup contains a Berge cycle of
length $s+1$.
\end{enumerate}
\end{proposition}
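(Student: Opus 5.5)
Both parts go by contradiction, with $A=f$. In each part we use the split locks of the matching type (long locks, Lemma~\ref{lem:long-locks}, for (i); short locks, Lemma~\ref{lem:short-locks}, for (ii)) to show that some Hamiltonian edge $e_p$ is confined to a set of size less than $m-1$.

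\emph{Setting up (i).} Suppose no Berge cycle of length $2m-s+1$ exists. Choose $a$ with $a,a+s+1\in A$. For each splitting $D+R=s$ with $D,R\ge1$, take the row $p=a-D$. Then $p+D=a\in A$ and $p-R=a-s\in A^c$, so only part~(i) of Lemma~\ref{lem:long-locks} applies directly. It gives
\[
e_p\cap(G_p+D-s-1)=\varnothing .
\]
Varying the row would be more useful: rows $p$ with both $p+D\in A$ and $p-R\in A$ would let both parts apply. However, $A\cap(A+s)=\varnothing$ forbids $p+D$ and $p-R$ from both lying in $A$. So a single row only ever gets one lock.

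\emph{The second lock in (i).} The plan is to use the extra hypothesis $s+1\in\Delta(A)$ to manufacture this second lock. Two routes are available:
\begin{itemize}
\item Exploit the pair $\{a,a+s+1\}$ as an $f$-chord spanning $s+2$ vertices. Combine it with the free label $e_p$ in a modified split construction, in the style of Lemma~\ref{lem:long-locks}, to obtain a further forbidden translate of $A$ inside $e_p$.
\item Alternatively, apply both parts of Lemma~\ref{lem:long-locks} for two different splittings $(D,R)$ and $(D',R')$ in the same row $p$. This requires $p+D\in A$ and $p-R'\in A$ with $D+R'=s+1$, and that is exactly what $s+1\in\Delta(A)$ provides.
\end{itemize}
With the second route, $e_p$ avoids both $G_p+D-s-1$ and $H_p+s-R'$.

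\emph{Counting in (i).} These two forbidden sets are translates of $G_p$ and $H_p$. By Lemma~\ref{lem:GH-count}, $|G_p|+|H_p|\ge2m-2-s$. The key step is to show that the two translates overlap in at most $O(1)+s$ points. This should use $A\cap(A+s)=\varnothing$ together with the fact that the shift between the two translates differs from $\pm s$ only by a bounded amount. Granting this, $e_p$ avoids at least about $2m-2-s-s$ vertices. That leaves at most roughly $2s+O(1)$ allowed vertices, which is fewer than $m-1$ only when $s$ is small.

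For larger $s$, the counting must be sharpened by varying $D$ over the whole range $1\le D\le s-1$, with the row $p$ fixed by $p+D=a$. Each splitting adds its own forbidden translate. Their union should cover all but at most $m-2$ vertices, because consecutive translates differ by a shift of $1$, and the union of nearby shifts of $G_p$ is much larger than $G_p$ itself. The endpoint cases $y=2m$ and the boundary values from Lemma~\ref{lem:boundary-bypass} will need separate checking. The hypothesis $s\le m-4$ supplies the slack of about three vertices needed to absorb these exceptions.

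\emph{Part (ii).} This is the mirror image, using Lemma~\ref{lem:short-locks}. Here the offsets are $D-s$ and $s-R-1$, and the hypothesis $s-1\in\Delta(A)$ supplies a row with $p+D\in A$ and $p-R'\in A$, where $D+R'=s-1$. The same union-of-translates counting then applies.

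\emph{Main obstacle.} The hardest step is proving that the union of the forbidden translates has size at least $m+2$ uniformly in $3\le s\le m-4$. If the pure counting falls short, my fallback is to feed the resulting near-extremal structure of $A$ into Theorem~\ref{thm:two-gap-stability}. Since $A$ misses $s$, near-equality should force $A$ to also miss $s\pm1$. In case (i) that contradicts $s+1\in\Delta(A)$ directly. In case (ii), Theorem~\ref{thm:two-gap-stability} would force $A=A+m$ with $m$ odd, and the rigid antipodal structure then yields the desired cycle explicitly.
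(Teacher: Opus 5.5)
Your second route picks exactly the right pair of locks, and this is the paper's strategy. In (i), take a row $p$ with $p+D\in A$ and $p-R'\in A$, where $D+R'=s+1$; concretely, $p=a+s-1$, $D=2$, $R'=s-1$, which is legal since $s\ge3$. Then Lemma~\ref{lem:long-locks} makes $e_p$ avoid both $G_p+D-s-1$ and $H_p+s-R'$.

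The gap is the counting step. You never compute the relative shift of these two translates, and instead postulate an overlap of size $O(1)+s$. In fact the shift is exactly $(s-R')-(D-s-1)=2s+1-(D+R')=s$. A common element would therefore give $g=h+s$ with $g\in G_p\subseteq A$ and $h\in H_p\subseteq A$, contradicting $A\cap(A+s)=\varnothing$. So the two forbidden sets are \emph{disjoint}. Lemma~\ref{lem:GH-count} together with $s\le m-4$ then gives at least $|G_p|+|H_p|\ge 2m-2-s\ge m+2$ forbidden vertices, which is impossible for the $(m-1)$-set $e_p$.

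In (ii) the same computation works with $D+R'=s-1$ and the short-lock translates $G_p+D-s$ and $H_p+s-R'-1$. The shift is again $2s-1-(D+R')=s$; for instance, take $a,a+s-1\in A$, $p=a+s-2$, $D=1$, $R'=s-2$. This single observation finishes both parts uniformly for $3\le s\le m-4$.

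Without it, your argument does not close, for four reasons.
\begin{enumerate}[label=\textup{(\arabic*)}]
\item The bound ``overlap at most $O(1)+s$'' has no justification. If the shift were $s+c$ with $c\ne0$ bounded, nothing would prevent $G_p$ and $H_p$ from containing linearly many pairs at distance $s+c$.
\item Even if that bound were granted, it yields only about $2m-2-2s$ forbidden vertices. This falls short of $m+2$ once $s>(m-4)/2$.
\item Your repair for large $s$ fails. If the row is fixed by $p+D=a$, each splitting constrains a different edge $e_{a-D}$. Those translates cannot be combined against a single $(m-1)$-set.
\item The fallback through Theorem~\ref{thm:two-gap-stability} rests on an unspecified ``near-equality'' structure. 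Moreover, in case (ii) the step ``the rigid antipodal structure yields the cycle'' is the length-$(r+1)$ part of Theorem~\ref{thm:antipodal-two-block}. That part is proved by invoking part (ii) of the present proposition, so the argument would be circular.
\end{enumerate}
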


\begin{proof}
Put $A=f$.  First suppose $s+1\in\Delta(A)$.  Choose $a,b\in A$ with
$b=a+s+1$ and put
\[
        p=b-2=a+s-1.
\]
Then $p+2=b\in A$ and $p-(s-1)=a\in A$.  If length $2m-s+1$ were absent,
apply the row-first long lock with the legal split
\[
        (D,R)=(2,s-2)
\]
and the $f$-first long lock with the different legal split
\[
        (D,R)=(1,s-1).
\]
These two applications forbid, respectively,
\[
        G_p-(s-1),
        \qquad
        H_p+1.
\]
The two sets are disjoint: an equality
$y-(s-1)=z+1$ with $y,z\in A$ would give $y=z+s$, contrary to
$A\cap(A+s)=\varnothing$.  Hence at least
\[
        |G_p|+|H_p|\ge2m-2-s\ge m+2
\]
vertices are forbidden for $e_p$, which is impossible because
$|e_p|=m-1$.

Now suppose $s-1\in\Delta(A)$.  Choose $a,b\in A$ with
$b=a+s-1$ and put
\[
        p=b-1=a+s-2.
\]
Then $p+1=b\in A$ and $p-(s-2)=a\in A$.  If length $s+1$ were absent,
apply the row-first short lock with
\[
        (D,R)=(1,s-1)
\]
and the $f$-first short lock with
\[
        (D,R)=(2,s-2).
\]
They again forbid the disjoint sets
\[
        G_p-(s-1),
        \qquad
        H_p+1,
\]
and the same count gives a contradiction.
\end{proof}

\begin{theorem}[Antipodal two-block lift]\label{thm:antipodal-two-block}
Let $m=2r+1$ with $r\ge5$, and let $n=2m=4r+2$.  In Setup~\ref{setup:even-outside}, suppose $f=f+m$ and suppose that $f$ misses the two cyclic distances $r$ and $r+1$.  Then the setup contains Berge cycles of lengths
\[
        r+1,
        \quad r+2,
        \quad 3r+2,
        \quad 3r+3.
\]
\end{theorem}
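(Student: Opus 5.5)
The plan is to first pin $f$ down exactly using the antipodal symmetry, and then run the split locks of Lemmas~\ref{lem:long-locks} and~\ref{lem:short-locks} with \emph{many} splits $(D,R)$ at a single well-chosen row. Proposition~\ref{prop:bulk-split-pair} cannot be used here, since $f$ misses both $r$ and $r+1$.

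\emph{Step 1: normal form for $f$.} Since $f=f+m$ and $|f|=m-1=2r$, $f$ is a union of $r$ antipodal pairs. Its image $B$ in $\Z_{2m}/\{0,m\}\cong\Z_m$ therefore has $|B|=r=(m-1)/2$. Both distances $r$ and $r+1$ in $\Z_{2m}$ project to $\pm r$ in $\Z_m$, because $r+1\equiv-r\pmod m$. Hence $B\cap(B+r)=\varnothing$. By Lemma~\ref{lem:odd-distance} (applied in odd order $m$, with $\gcd(m,r)=1$), $B$ is a translate of $\{0,2r,4r,\ldots,2(r-1)r\}$. Since $2r\equiv-1\pmod m$, this set is an interval of $r$ consecutive residues. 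Lifting back, after rotation we get
\[
 f=[0,r-1]\cup[m,m+r-1],
\]
two antipodal blocks of length $r$ separated by gaps of length $r+1$. A direct check confirms that $f$ realizes every distance except $r$ and $r+1$. In particular, Lemma~\ref{lem:chord} applied to $f$ already yields the neighbouring lengths $r$, $r+3$, $3r+1$ and $3r+4$, but not the four required ones.

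\emph{Step 2: identify the lengths with lock parameters.} The targets are $s+1$ and $2m-s+1$ for $s\in\{r,r+1\}$:
\begin{itemize}
\item $r+1$ and $3r+3$ correspond to $s=r$;
\item $r+2$ and $3r+2$ correspond to $s=r+1$.
\end{itemize}
For $s=r$ we have $A\cap(A+r)=\varnothing$ with $A=f$, so both lemmas apply. The same holds for $s=r+1$.

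\emph{Step 3: counting at one row.} Fix a target length and assume it is absent. I would choose the row $p=-1$, just before the block $[0,r-1]$. Then $p+D\in A$ holds for every $D\in[1,r]$, so every row-first split with $D\le s-1$ is legal. Symmetrically, $p=r$ makes every $f$-first split $p-R\in A$ legal.

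The corresponding forbidden sets are $G_p+D-s$ (short) or $G_p+D-s-1$ (long), for $D=1,\ldots,s-1$. These are consecutive translates of $G_p$. By the explicit form, $G_p=A\cap[p+s+1,p]^+$ consists essentially of the far block $[m,m+r-1]$ together with part of the near block. The union of $s-1$ consecutive translates of a block of length $r$ is an interval of length about $2r-2$. Adding the translates of the remaining piece of $G_p$, and combining row-first with $f$-first locks at the same row (each with its own admissible split range, as in Proposition~\ref{prop:bulk-split-pair}), I expect the forbidden set for $e_p$ to reach at least $m+2=2r+3$ vertices, or at least $m+1$ outside a set that $e_p$ cannot fill. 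This contradicts $|e_p|=m-1$.

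If a single row falls short by $O(1)$ vertices, the fallback is to combine two adjacent rows via the reversal lock, Lemma~\ref{lem:reversal-lock}, or the boundary bypass, Lemma~\ref{lem:boundary-bypass}, to absorb the deficit.

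\emph{Main obstacle.} Step~3 is where I expect the difficulty. One has to check carefully, in each of the four cases, that the union of shifted translates of $G_p$ and $H_p$ really has size at least $m+2$ given the exact block geometry. The delicate part is the endpoints, where the intervals $[p+s+1,p]^+$ clip the blocks and the shifts by $D-s$ versus $D-s-1$ differ by one. This is where the hypothesis $r\ge5$ should enter.
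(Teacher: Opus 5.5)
Your Steps~1 and~2 are correct and match the paper's normal form $f=[0,r-1]\cup[2r+1,3r]$. Your remark that Proposition~\ref{prop:bulk-split-pair} cannot be used is wrong, however, and this matters. Part~(ii) with $s=r$ needs only $r-1\in\Delta(f)$, witnessed by $\{0,r-1\}$. Part~(i) with $s=r+1$ needs only $r+2\in\Delta(f)$, witnessed by $\{0,3r\}$. The range condition $3\le s\le m-4$ holds since $r\ge5$. This is how the paper gets $r+1$ and $3r+2$ at once, so only the cross lengths $3r+3$ (long, $s=r$) and $r+2$ (short, $s=r+1$) need new work.

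The genuine gap is Step~3: it is an expectation rather than a count, and the row you choose does not work.
\begin{itemize}
\item At $p=-1$, every $p-R$ with $1\le R\le s-1$ lies in the gap $[3r+1,4r]$, so no $f$-first lock is legal for either $s$. Also $G_{-1}=[2r+1,3r]$ contains nothing from the near block.
\item Consequently the complete all-split forbidden set at $p=-1$ has exactly $2r=m-1$ elements for each of the four targets. For instance it is $\{r-1\}\cup[r+1,3r-2]\cup\{3r+2\}$ for $3r+3$, and $[r+1,3r]$ for $r+2$. That leaves $m+1$ admissible vertices for $e_{-1}$, so there is no contradiction.
\item The mirror row $p=r$ does handle $3r+3$: its forbidden set is $[1,r-1]\cup\{2r\}\cup[2r+2,4r-1]$, of size $3r-2$. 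But for $r+2$ and $3r+2$ it gives only $2r$ and $2r+1$ forbidden vertices.
\end{itemize}

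The missing idea is to put the row strictly inside the gap. Then a small $D$ reaches the near block while some $R\le s-1$ reaches the far block, and you can combine one lock of each type. The paper does this as follows:
\begin{itemize}
\item For $3r+3$ it takes $p=4r-1$, with the row-first split $D=3$ and the $f$-first split $R=r-1$.
\item For $r+2$ it takes $p=4r$, with $D=2$ and $R=r$.
\end{itemize}
In both cases the $f$-first lock forbids a translate of $H_p$, where $|H_p|=2r-1$. The row-first lock forbids a translate of the far block, contributing $r-1$ further vertices disjoint from it. The total is $3r-2$, and $3r-2\ge m+2$ exactly when $r\ge5$. Your fallback via Lemma~\ref{lem:reversal-lock} is unspecified and is not needed.
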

\begin{proof}
Let
\[
 \pi:\Z_{4r+2}\longrightarrow\Z_{2r+1}
\]
be the quotient map modulo the antipodal subgroup
$\{0,2r+1\}$.  Since $f=f+(2r+1)$, there is a set
$B\subseteq\Z_{2r+1}$ such that
\[
        f=\pi^{-1}(B).
\]
Every quotient point has two lifts, so $|B|=|f|/2=r$.

We claim that $B$ misses quotient distance $r$.  Indeed, if
$b,b+r\in B$, choose a lift $x\in\pi^{-1}(b)$.  The two lifts of $b+r$
are $x+r$ and $x+r+(2r+1)$.  Hence the full lift $f$ contains one pair at
ambient cyclic distance $r$ and another at ambient cyclic distance
\[
 \min\{3r+1,(4r+2)-(3r+1)\}=r+1,
\]
contrary to the hypothesis that both distances are missing.  Thus
$r\notin\Delta(B)$.

Lemma~\ref{lem:odd-distance} now applies to $B$.  Since
$2r\equiv-1\pmod{2r+1}$, a maximum independent set in the $r$-step cycle
is, after translation, the interval $[0,r-1]$.  Rotating the ambient cyclic
order by the same translation gives
\[
        B=[0,r-1],
        \qquad
        f=\pi^{-1}(B)=[0,r-1]\cup[2r+1,3r]
        \subseteq\Z_{4r+2}.
\]
The neighbor hypotheses in Proposition~\ref{prop:bulk-split-pair} are
explicit in this normal form.  Since $\{0,r-1\}\subseteq f$, one has
$r-1\in\Delta(f)$; applying part~(ii) with the missing distance $s=r$
gives length $r+1$.  Also $\{0,3r\}\subseteq f$, and this pair has ambient
cyclic distance
\[
        \min\{3r,(4r+2)-3r\}=r+2.
\]
Thus $r+2\in\Delta(f)$; applying part~(i) with the missing distance
$s=r+1$ gives length $3r+2$.

It remains to prove the cross lengths.  For length $3r+3$, take $s=r$ and $p=4r-1$.  Then
\[
        H_p=[0,r-1]\cup[2r+1,3r-1],
        \qquad |H_p|=2r-1.
\]
Since $p-(r-1)=3r\in f$, the closing long lock forbids $H_p+1$.  Also $G_p$ contains $[2r+1,3r]$, and since $p+3\equiv0\in f$, the row long lock forbids
\[
        [2r+1,3r]-(r-2)=[r+3,2r+2].
\]
The interval $[r+3,2r+1]$ is disjoint from $H_p+1$ and has size $r-1$.  Thus the forbidden set has size at least $(2r-1)+(r-1)=3r-2\ge2r+3=m+2$, impossible if length $3r+3$ were absent.

For length $r+2$, take $s=r+1$ and $p=4r$.  Then
\[
        H_p=[0,r-1]\cup[2r+1,3r-1],
        \qquad |H_p|=2r-1.
\]
Since $p-r=3r\in f$, the closing short lock forbids $H_p$.  Also $G_p=[2r+1,3r]$, and since $p+2\equiv0\in f$, the row short lock forbids
\[
        G_p-(r-1)=[r+2,2r+1].
\]
The interval $[r+2,2r]$ is disjoint from $H_p$ and has size $r-1$.  Again the forbidden set has size at least $3r-2\ge m+2$, a contradiction if length $r+2$ were absent.
\end{proof}

\begin{theorem}[Lengths outside the middle band]\label{thm:outside-middle-band}
Every even one-extra-edge setup with $m\ge12$ contains Berge cycles of
lengths $2$, $3$, $2m-1$,
$2m$, and every length $L$ whose associated distance
\[
        s(L)=\min\{L-1,2m-L+1\}
\]
satisfies $3\le s(L)\le m-4$.
\end{theorem}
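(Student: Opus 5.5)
The endpoint lengths come directly from the results already proved. Length $2m$ is the given cycle $C$. Length $2$ is Lemma~\ref{lem:even-two-cycle}, length $3$ is Lemma~\ref{lem:even-triangle} (valid since $m\ge12\ge8$), and length $2m-1$ is Theorem~\ref{thm:next-to-hamiltonian}.

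Now fix $s$ with $3\le s\le m-4$. The two lengths with $s(L)=s$ are $L=s+1$ and $L=2m-s+1$, and we will produce both. If $s\in\Delta(f)$, the chord lemma (Lemma~\ref{lem:chord}) applied to $C$ and $f\notin E(C)$ gives both lengths at once. So we may assume $f\cap(f+s)=\varnothing$. We then split according to whether a neighbouring distance $s\pm1$ is realized by $f$.

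If $s+1\in\Delta(f)$, Proposition~\ref{prop:bulk-split-pair}(i) gives length $2m-s+1$. If $s-1\in\Delta(f)$, part (ii) gives length $s+1$. What remains is the possibility that $f$ misses two consecutive distances.

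Suppose first that $f$ misses $s+1$ (so $s+1\le m-3$). Apply Theorem~\ref{thm:two-gap-stability} with $d=s$: since $s,s+1\notin\Delta(f)$, we get $m$ odd, $s=(m-1)/2=:r$ and $f=f+m$.

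Suppose instead that $f$ misses $s-1$. Apply Theorem~\ref{thm:two-gap-stability} with $d=s-1\ge2$: we get $m$ odd, $s-1=(m-1)/2$ and $f=f+m$. In this case $s=r+1$.

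So in either exceptional case we have $m=2r+1$ with $r\ge6$ (because $m\ge12$), $f=f+m$, and $f$ missing both $r$ and $r+1$. Theorem~\ref{thm:antipodal-two-block} then supplies lengths $r+1,r+2,3r+2,3r+3$. These are exactly $s+1$ and $2m-s+1$ for $s=r$ (namely $r+1$ and $3r+3$) and for $s=r+1$ (namely $r+2$ and $3r+2$).

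It remains to check that each exceptional case yields the length not already obtained. Suppose $s$ is missed by $f$ but $s+1$ is realized. Part (i) gives $2m-s+1$, and for $s+1$ we need $s-1$ realized or the exceptional structure. If $s-1$ is missed, the second exceptional case applies and Theorem~\ref{thm:antipodal-two-block} gives both lengths. The symmetric situation is handled the same way, and if both neighbours are missed, either exceptional case suffices.

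The only delicate point is this bookkeeping. One must confirm that the hypotheses $1\le d\le m-2$ of Theorem~\ref{thm:two-gap-stability} hold, which they do because $2\le s-1$ and $s+1\le m-3$. One must also confirm that $r\ge5$ holds in Theorem~\ref{thm:antipodal-two-block}, which follows from $m\ge12$. No new construction is needed, so I expect no genuine obstacle beyond careful case tracking.
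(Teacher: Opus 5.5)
Your proposal is correct and takes the same route as the paper's proof. The endpoint lengths come from the cited lemmas. When $s\in\Delta(f)$ you use the chord lemma, and when a neighbouring distance is realized you use the isolated split-pair proposition. When two consecutive distances are missed, you combine two-gap stability with the antipodal two-block lift. Your explicit case bookkeeping and your checks that $1\le d\le m-2$ and $r\ge 6$ spell out details the paper's shorter proof leaves implicit.
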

\begin{proof}
The endpoint lengths are supplied by Lemmas~\ref{lem:even-two-cycle} and~\ref{lem:even-triangle}, Theorem~\ref{thm:next-to-hamiltonian}, and the
Hamiltonian cycle.  Fix $s\in[3,m-4]$.  If $s$ is realized by $f$, the
chord lemma gives both $s+1$ and $2m-s+1$.  Suppose that $f$ misses $s$.
If both neighboring distances $s-1$ and $s+1$ are realized, apply
Proposition~\ref{prop:bulk-split-pair}.  Otherwise $f$
misses two consecutive interior distances.  Theorem~\ref{thm:two-gap-stability}
then gives $m=2r+1$, antipodal invariance $f=f+m$, and the exceptional
pair $r,r+1$.  Theorem~\ref{thm:antipodal-two-block} supplies all four
cycle lengths associated with those two distances.  Hence both lengths
associated with $s$ occur.
\end{proof}

\section{The central middle length}\label{sec:central-length}

The central case is reduced first to antipodal-free Hamiltonian labels.  If
the extra edge $f$ contains an antipodal pair, the chord lemma gives length
$m+1$.  If some Hamiltonian label $e_p$ contains an antipodal pair
$\{a,a+m\}$, choose whichever of the two half-circle index intervals
$[a,a+m-1]^+$ and $[a+m,a-1]^+$ contains $p$.  Use $e_p$ on the bridge
$\{a,a+m\}$, and choose a one-index gap disjoint from that half-circle
interval, using its original Hamiltonian label on the adjacent bridge.
The two bridge labels are distinct and the two-gap certificate again gives
length $m+1$.  Thus a counterexample to the
central length satisfies Setup~\ref{setup:central}.

\begin{setup}[Central spin setup]\label{setup:central}
Let $n=2m$.  A \emph{central antipodal-free Hamiltonian label system} is a collection of distinct $(m-1)$-sets $e_0,\ldots,e_{2m-1}\subseteq\Z_{2m}$ such that $\{p,p+1\}\subseteq e_p$ and no $e_p$ contains an antipodal pair.
For $1\le k\le m-2$, define
\[
\rho_p(k)=
\begin{cases}
+,&p-k\in e_p,\\
-,&p-k+m\in e_p,\\
0,&\text{neither }p-k\text{ nor }p-k+m\text{ lies in }e_p.
\end{cases}
\]
The anchors $p,p+1$ exclude their antipodes.  The remaining vertices form
$m-2$ antipodal pairs, and the $(m-1)$-set $e_p$ selects one point from
exactly $m-3$ of them.  Thus each row has exactly one zero.
\end{setup}

For each column $k$, set
\[
        P_k=\{p:\rho_p(k)=+\},\qquad
        M_k=\{p:\rho_p(k)=-\},\qquad
        Z_k=\{p:\rho_p(k)=0\}.
\]
The \emph{support} of column $k$ is its nonzero-row set
\[
        \operatorname{supp}(k):=P_k\cup M_k
        =\Z_{2m}\setminus Z_k.
\]
We say that column $k$ has \emph{full support} when $Z_k=\varnothing$.
\subsection{Long-range triggers}
\label{subsec:long-range-central-triggers}

Retain the central spin setup.  Thus
\[
 \rho_p(k)=+
 \quad\Longleftrightarrow\quad p-k\in e_p,
 \qquad
 \rho_p(k)=-
 \quad\Longleftrightarrow\quad p+m-k\in e_p,
\]
for $1\le k\le m-2$, and each row has exactly one zero.

The following two triggers exploit the full range of half-circle placements
and drive the structural extraction.

\begin{lemma}[Half-circle opposition trigger]
\label{lem:half-circle-opposition}
Let $1\le k\le m-2$.  If
\[
        \rho_p(k)=+,
        \qquad
        \rho_{p+s}(k)=-
\]
for some $1\le s\le m$, then the Hamiltonian labels contain a free-label
two-gap Berge cycle of length $m+1$.
\end{lemma}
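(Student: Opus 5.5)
The plan is to exhibit one explicit two-gap template and apply Proposition~\ref{prop:free-label}. That proposition gives length $2m-d-e+2$, so to reach $m+1$ the gap sizes must satisfy $d+e=m+1$. The two labels to place on the bridges are $e_p$ and $e_q$, where $q:=p+s$.

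\emph{Step 1: choose the bridges.} The hypothesis $\rho_p(k)=+$ says that $p-k\in e_p$, and $e_p$ also contains its anchor $p+1$. So $e_p$ contains the pair $\{p-k,p+1\}$, which has forward gap $k+1$. Take
\[
        I=[p-k,p]^+,\qquad d=k+1,\qquad \beta_I=\{p-k,p+1\}\subseteq e_p .
\]
The hypothesis $\rho_q(k)=-$ says that $q+m-k\in e_q$, and $e_q$ contains its anchor $q$. So $e_q$ contains $\{q,q+m-k\}$, which has forward gap $m-k$. Take
\[
        J=[q,q+m-k-1]^+,\qquad e=m-k,\qquad \beta_J=\{q,q+m-k\}\subseteq e_q .
\]
Then $d+e=m+1$, as required. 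Note that $p\in I$ and $q\in J$, so both labels are drawn from $I\cup J$, and they are distinct because $q=p+s$ with $1\le s\le m<2m$. The range $1\le k\le m-2$ ensures $1\le d,e\le 2m-1$.

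\emph{Step 2: check disjointness.} This is the only point that needs care, and it is where the restriction $1\le s\le m$ enters. In an integer lift, $I=[p-k,p]$ and $J=[p+s,\,p+s+m-k-1]$. The interval $J$ begins at $p+s\ge p+1$, strictly after the right end of $I$. It ends at $p+s+m-k-1\le p+2m-k-1<(p-k)+2m$, strictly before the lift of the left end of $I$ to the next period. Hence $I$ and $J$ are disjoint cyclic intervals, with total length $m+1\le 2m-2$ once $m\ge3$.

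\emph{Step 3: conclude.} Proposition~\ref{prop:free-label} now applies with $i=p$ and $j=q$, and it produces a Berge cycle of length $2m-(k+1)-(m-k)+2=m+1$ using only Hamiltonian labels.

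I expect no real obstacle beyond the bookkeeping in Step~2. If the chosen orientation of one bridge ever caused an overlap, I would instead use the reflected choice $\{p-k,p\}$ with $I=[p-k,p-1]^+$ and the other bridge of $e_q$, but the computation above suggests this fallback is not needed.
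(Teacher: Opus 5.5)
Your proposal is correct and matches the paper's proof: you use the same intervals $I=[p-k,p]^+$ and $J=[p+s,p+s+m-k-1]^+$ with bridges $\{p-k,p+1\}\subseteq e_p$ and $\{p+s,p+s+m-k\}\subseteq e_{p+s}$. You also check disjointness the same way, since $1\le s\le m$ places $J$ inside $[p+1,p+2m-k-1]$, and then apply Proposition~\ref{prop:free-label} with $d+e=m+1$. The fallback you mention is never needed.
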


\begin{proof}
The plus cell says that $e_p$ covers
\[
        \{p-k,p+1\}.
\]
Use the free interval
\[
        I=[p-k,p]^{+},
        \qquad |I|=k+1.
\]
The minus cell says that $e_{p+s}$ covers
\[
        \{p+s,p+s+m-k\}.
\]
Use
\[
        J=[p+s,p+s+m-k-1]^{+},
        \qquad |J|=m-k.
\]
After translating $p$ to $0$, the first interval is
\[
        \{2m-k,\ldots,2m-1,0\},
\]
while the second is contained in
\[
        \{1,\ldots,2m-k-1\}
\]
for $1\le s\le m$.  Hence $I$ and $J$ are disjoint.  Their lengths add to
$m+1$, the labels $p$ and $p+s$ lie in the respective free intervals, and
they are distinct.  Proposition~\ref{prop:free-label} applies.
\end{proof}

For a column $k$, put
\[
        \bar k=m-1-k.
\]

\begin{lemma}[Reflected-column trigger]
\label{lem:reflected-column-trigger}
Let $1\le k\le m-2$ and let $\bar k=m-1-k$.  Suppose that two cells in
columns $k$ and $\bar k$ have the same nonzero sign.  If their row separation
$s$ belongs to
\[
        B_k=[m-k,\,2m-k-1]^{+},
\]
then the labels contain a free-label two-gap Berge cycle of length $m+1$.
The interval $B_k$ contains exactly $m$ row separations.
\end{lemma}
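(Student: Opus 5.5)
The plan is to apply Proposition~\ref{prop:free-label} directly, exactly as in the proof of Lemma~\ref{lem:half-circle-opposition}. Each nonzero cell supplies a bridge pair covered by its own row label. Each bridge determines a free interval containing that row index. The reflection $\bar k=m-1-k$ makes the two interval lengths $k+1$ and $m-k$, which sum to $m+1$, so the certificate gives length $2m-(m+1)+2=m+1$. The remaining work is to check that the two intervals are disjoint precisely when the row separation lies in $B_k$.

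Fix the convention that the column-$k$ cell sits in row $p$ and the column-$\bar k$ cell sits in row $q=p+s$. Translate so that $p=0$.

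\emph{Both signs $+$.} Since $\rho_0(k)=+$ and $1\in e_0$, the label $e_0$ covers $\{-k,1\}$. This is the bridge of $I=[-k,0]^+$, with $|I|=k+1$ and $0\in I$. Since $\rho_q(\bar k)=+$, we have $q-\bar k=q+k+1-m\in e_q$, and the anchor gives $q+1\in e_q$. So $e_q$ covers the bridge of $J=[q+k+1-m,\,q]^+$, with $|J|=m-k$ and $q\in J$.

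\emph{Both signs $-$.} Since $\rho_0(k)=-$, the label $e_0$ contains $m-k$ and the anchor $0$. This is the bridge of $I=[0,m-k-1]^+$, of size $m-k$. Since $\rho_q(\bar k)=-$, the label $e_q$ contains $q+m-\bar k=q+k+1$ and $q$. This is the bridge of $J=[q,q+k]^+$, of size $k+1$.

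Next I would check disjointness in the integer lift $[-k,2m-k-1]$, or $[0,2m-1]$ in the minus case.

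In the plus case, $J=[s+k+1-m,\,s]$ avoids $I=[-k,0]$ if and only if $s+k+1-m\ge1$ and $s\le 2m-k-1$. Equivalently, $m-k\le s\le 2m-k-1$, which is exactly $s\in B_k$.

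In the minus case, $J=[s,s+k]$ avoids $I=[0,m-k-1]$ if and only if $s\ge m-k$ and $s+k\le 2m-1$. This is the same range.

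Since $I$ and $J$ are disjoint, the row indices $0\in I$ and $q\in J$ are distinct. Proposition~\ref{prop:free-label} then applies with $d+e=m+1$ and yields a Berge cycle of length $m+1$. The degenerate sub-cases, adjacent intervals or one retained arc collapsing to a vertex, are already handled inside that proposition. Finally, $|B_k|=(2m-k-1)-(m-k)+1=m$, which proves the last sentence of the lemma.

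The main obstacle I expect is the bookkeeping of orientation. One must pick the correct anchor, $p+1$ versus $p$, for each sign, so that the two bridges point in compatible directions and yield intervals of lengths $k+1$ and $m-k$ rather than $k$ and $m-k-1$. One must also fix which row is the column-$k$ row when stating the separation $s$. If the lemma intends the separation to be unordered, I would also treat the swapped assignment, with the $\bar k$ cell at row $p$. By the symmetry $k\leftrightarrow\bar k$ this gives the separation window $B_{\bar k}$ for the reversed difference. I would verify that it coincides with the stated convention, namely $-s\in B_{\bar k}$ if and only if $s\in B_k$ modulo $2m$.
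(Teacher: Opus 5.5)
Your proposal is correct and follows the paper's proof: the same intervals ($I=[p-k,p]^+$, $J=[p+s-m+1+k,p+s]^+$ for two plus cells; $I=[p,p+m-k-1]^+$, $J=[p+s,p+s+k]^+$ for two minus cells), the same disjointness range $m-k\le s\le 2m-k-1$, and a direct appeal to Proposition~\ref{prop:free-label}. Your convention of placing the column-$k$ cell at row $p$ and the column-$\bar k$ cell at row $p+s$ is the paper's. Your check that $-s\in B_{\bar k}$ if and only if $s\in B_k$ correctly shows that the swapped reading is consistent.
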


\begin{proof}
For two plus cells at rows $p$ and $p+s$, use
\[
 I=[p-k,p]^{+},
 \qquad
 J=[p+s-m+1+k,p+s]^{+}.
\]
Their lengths are $k+1$ and $m-k$.  After translating $p$ to $0$, the two
intervals are disjoint precisely for
\[
        m-k\le s\le2m-k-1.
\]
The corresponding bridges are covered by $e_p$ and $e_{p+s}$.

For two minus cells use
\[
 I=[p,p+m-k-1]^{+},
 \qquad
 J=[p+s,p+s+k]^{+}.
\]
The same separation range makes the intervals disjoint, and their lengths
are again $m-k$ and $k+1$.  In both cases the bridge labels are free and
distinct, so Proposition~\ref{prop:free-label} applies.
\end{proof}

For a column $k$, put $z_k=|Z_k|$.  Call $k$ \emph{mixed} if both
$P_k$ and $M_k$ are nonempty.

\begin{lemma}[Zero budget of a mixed column]
\label{lem:mixed-column-zero-budget}
If there is no free-label two-gap cycle of length $m+1$ and column $k$ is
mixed, then
\[
        z_k\ge m.
\]
Consequently at most two columns are mixed.
\end{lemma}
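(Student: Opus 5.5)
The plan is to turn the half-circle opposition trigger (Lemma~\ref{lem:half-circle-opposition}) into a statement about runs in a single column read cyclically in the row index. Then the consequence follows by counting the zeros of the whole spin array.

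\emph{Step 1 (forbidden pattern).} Fix a mixed column $k$ and assume there is no free-label two-gap cycle of length $m+1$. By Lemma~\ref{lem:half-circle-opposition}, whenever $p\in P_k$, none of the rows $p+1,\ldots,p+m$ lies in $M_k$. Equivalently, every minus row lies at forward row-distance at least $m+1$ from every plus row.

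\emph{Step 2 (locate a sign change).} Read column $k$ cyclically along $\Z_{2m}$, keeping only its nonzero rows $\operatorname{supp}(k)=P_k\cup M_k$. Both $P_k$ and $M_k$ are nonempty, so this cyclic sequence of signs is not constant. Hence it contains some plus row $p$ whose next nonzero row $q$, going forward, is a minus row. Write $q=p+t$ with $1\le t\le 2m-1$. Step~1 forces $t\ge m+1$. By the choice of $q$, the rows $p+1,\ldots,q-1$ all lie in $Z_k$. That gives $t-1\ge m$ zeros, so $z_k\ge m$.

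\emph{Step 3 (at most two mixed columns).} By Setup~\ref{setup:central}, each row has exactly one zero. Summing over columns therefore gives
\[
\sum_{k=1}^{m-2} z_k = 2m .
\]
If three columns were mixed, Step~2 would give $\sum_k z_k\ge 3m>2m$, a contradiction. So at most two columns are mixed.

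I do not expect a real obstacle. The only point needing care is the cyclic bookkeeping in Step~2: one must use the \emph{first} nonzero row after $p$, so that every intermediate row is genuinely a zero. One must also check that the separation $t=q-p$ is taken in $[1,2m-1]$, which makes the trigger's range $1\le s\le m$ apply exactly when $t\le m$.
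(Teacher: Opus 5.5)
Your proposal is correct and follows essentially the same argument as the paper. You locate a plus-to-minus transition in the cyclic nonzero sequence of the column and use Lemma~\ref{lem:half-circle-opposition} to force the separation above $m$, giving at least $m$ intervening zeros. You then conclude from $\sum_k z_k=2m$ that at most two columns can be mixed.
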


\begin{proof}
Read the nonzero cells in column $k$ cyclically.  Since the column is
mixed, there are two consecutive cells in this cyclic nonzero sequence
whose signs change from plus to minus.  If there are $z$ zero cells strictly
between those two cells in the full column, their clockwise row separation
is exactly $z+1$.  By
Lemma~\ref{lem:half-circle-opposition}, this separation is greater than
$m$.  Thus $z\ge m$.  Since every one of the $2m$ rows has exactly one zero,
\[
        \sum_{k=1}^{m-2}z_k=2m,
\]
and at most two columns can be mixed.
\end{proof}

\begin{lemma}[Monochromatic reflected columns have opposite signs]
\label{lem:reflected-opposite-signs}
Assume there is no free-label two-gap cycle of length $m+1$.  Let $k\ne\bar
k$.  If columns $k$ and $\bar k$ are both nonmixed and both contain a
nonzero cell, then their nonzero signs are opposite.
\end{lemma}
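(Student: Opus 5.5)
We argue by contradiction, using the reflected-column trigger (Lemma~\ref{lem:reflected-column-trigger}). Suppose columns $k$ and $\bar k$ are both nonmixed, each has a nonzero cell, and their nonzero signs agree, say both are $+$; the case of two minus columns is symmetric.

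The plan is to find one nonzero cell in column $k$ and one in column $\bar k$ whose row separation lies in the window $B_k=[m-k,2m-k-1]^+$. Because $B_k$ consists of $m$ consecutive residues, it is enough to control where the nonzero cells sit.

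\emph{Supports are large.} Each row has exactly one zero, so $\sum_k z_k=2m$. Hence $z_k+z_{\bar k}\le 2m$, and whenever one of the two columns has many zeros the other has few. More precisely:
\begin{itemize}
\item If $z_k\le m-1$, then $\operatorname{supp}(k)$ has at least $m+1$ rows.
\item Column $\bar k$ has at least one nonzero row $q$.
\item The set of rows $p$ with $q-p\in B_k$ is a window of $m$ consecutive residues. Its complement has only $m$ rows, so it cannot contain all of $\operatorname{supp}(k)$.
\end{itemize}
So some $p\in\operatorname{supp}(k)$ has $q-p\in B_k$. Here I use that the trigger applies with the cell in column $k$ at row $p$ and the cell in column $\bar k$ at row $p+s$, which is how its proof is set up. This yields a cycle of length $m+1$, a contradiction. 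The same argument works with the roles of $k$ and $\bar k$ exchanged, using $B_{\bar k}$ and the fact that the lemma is symmetric under $k\leftrightarrow\bar k$. Therefore we may assume $z_k\ge m$ and $z_{\bar k}\ge m$.

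\emph{Both columns have exactly $m$ zeros.} Since $z_k+z_{\bar k}\le 2m$, we get $z_k=z_{\bar k}=m$, and every other column has full support. This is the main obstacle. Each of $\operatorname{supp}(k)$ and $\operatorname{supp}(\bar k)$ now has exactly $m$ rows, and the two supports are disjoint, because each row has only one zero. We need a pair $p\in\operatorname{supp}(k)$, $q\in\operatorname{supp}(\bar k)$ with $q-p\in B_k$.

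Suppose no such pair exists. Then the difference set $\operatorname{supp}(\bar k)-\operatorname{supp}(k)$ misses the $m$-element interval $B_k$. By Cauchy--Davenport/Kneser in $\Z_{2m}$ (Theorem~\ref{thm:kneser}), a sumset of two $m$-sets has size at least $2m-|H|$, where $H$ is the stabilizer. It also has size at most $m$, since it lies in the complement of $B_k$. This forces the difference set to be a coset of a subgroup $H$ of order $m$, which is the even residues.

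That coset is an $H$-coset of size $m$ lying inside the interval complement of $B_k$. An interval of $m$ consecutive residues cannot be a coset of the even subgroup once $m\ge2$, so this is impossible. (In the degenerate case where the difference set is a translate of $\{0,2,\dots\}$, it still meets an interval of length $m\ge3$.)

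Hence the required pair exists, and the trigger gives a cycle of length $m+1$. In every case the assumption of equal signs produces a free-label two-gap cycle of length $m+1$, contradicting the hypothesis, so the nonzero signs of columns $k$ and $\bar k$ are opposite.
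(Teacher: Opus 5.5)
Your proof is correct. The first step matches the paper's: a single nonzero cell in one column confines the mate's support to the $m$-row complement of a translate of $B_k$. Hence $z_k,z_{\bar k}\ge m$, and both equal $m$.

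The endgame differs. The paper notes that once $z_{\bar k}=m$, the support of $\bar k$ must \emph{equal} the complementary interval of $p+B_k$ for every $p\in\operatorname{supp}(k)$. Two distinct support rows $p,p'$ then make this length-$m$ interval invariant under translation by $p'-p\ne0$. That is impossible, since a proper length-$m$ interval in $\Z_{2m}$ has trivial stabilizer.

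You instead apply Kneser's theorem to $\operatorname{supp}(\bar k)-\operatorname{supp}(k)$. This forces its stabilizer to have order at least $m$, so the difference set is a single coset of the even subgroup. Such a coset cannot avoid an interval of $m\ge2$ consecutive residues. You should state explicitly that the stabilizer cannot be all of $\Z_{2m}$, since then the nonempty difference set would be everything and would meet $B_k$; this is immediate.

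The paper's route is fully elementary, while yours uses heavier machinery that is already available in the paper. Your route has one advantage. The inequality $|\operatorname{supp}(k)|+|\operatorname{supp}(\bar k)|=4m-z_k-z_{\bar k}\ge2m$ follows directly from $\sum_\ell z_\ell=2m$ and $k\ne\bar k$. The same Kneser computation then gives the contradiction at once, so your first step is actually unnecessary.
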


\begin{proof}
Suppose that both columns have the same sign.  Fix a nonzero row $p$ in
column $k$.  Lemma~\ref{lem:reflected-column-trigger} implies that the
support of column $\bar k$ avoids the $m$-set $p+B_k$.  Hence
$z_{\bar k}\ge m$.  Interchanging the two columns gives $z_k\ge m$.
Therefore
\[
 z_k=z_{\bar k}=m
\]
and every other column has no zeros.

The support of column $\bar k$ has size $m$ and is contained in the
complement of $p+B_k$, which is also an interval of size $m$.  Thus equality
holds.  Repeating the argument with a second support point $p'$ of column
$k$ says that the same length-$m$ interval is invariant under translation
by $p'-p$.  A proper cyclic interval of length $m$ in $\Z_{2m}$ has
trivial stabilizer: a stabilizing translation must preserve the set of its
two boundary transitions; fixing either transition forces the zero
translation, while exchanging the two transitions is translation by $m$,
which sends the interval to its complement.  Since the support of column
$k$ has $m\ge2$ points, this is impossible.
\end{proof}

\begin{lemma}[Two mixed reflected columns are impossible]
\label{lem:two-mixed-reflected-impossible}
Assume there is no free-label two-gap cycle of length $m+1$.  If two columns
are mixed, then they cannot be a reflected pair.
\end{lemma}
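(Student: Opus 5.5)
Suppose the two mixed columns are $k$ and $\bar k$ with $k\ne\bar k$. The plan is first to use the zero budget to force a rigid normal form for both columns. Then one application of Lemma~\ref{lem:reflected-column-trigger} to two plus cells yields a two-gap cycle, which is the contradiction.

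\emph{Step 1: zero budget.} By Lemma~\ref{lem:mixed-column-zero-budget}, $z_k\ge m$ and $z_{\bar k}\ge m$. The identity $\sum_{k'} z_{k'}=2m$ then forces $z_k=z_{\bar k}=m$, and every other column has full support. Since each row has exactly one zero, $Z_k$ and $Z_{\bar k}$ partition $\Z_{2m}$.

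\emph{Step 2: normal form of column $k$.} Column $k$ is mixed, so its cyclic nonzero sequence has a plus-to-minus change. By the proof of Lemma~\ref{lem:mixed-column-zero-budget}, at least $m$ zeros lie strictly between the two cells of this change. Column $k$ has only $m$ zeros, so all of them form this one gap. Hence $Z_k$ is a cyclic interval of length $m$, and $\operatorname{supp}(k)$ is the complementary interval, say $[q,q+m-1]^+$.

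Inside the support, two consecutive cells have row separation $1\le m$. By Lemma~\ref{lem:half-circle-opposition} they therefore cannot read plus followed by minus. So the support reads $-^{a}+^{b}$ clockwise, with $a,b\ge1$ and $a+b=m$; the plus-to-minus change is the one across the zero block.

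The same argument applies to column $\bar k$. Its support is $Z_k=[q+m,q+2m-1]^+$, and it reads $-^{a'}+^{b'}$ with $a',b'\ge1$.

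\emph{Step 3: the contradiction.} The plus cells of column $k$ occupy the rows $q+a,\ldots,q+m-1$. The plus cells of column $\bar k$ occupy $q+m+a',\ldots,q+2m-1$. Both are integer intervals, so the clockwise separations from a $k$-plus row to a $\bar k$-plus row realize every integer in
\[
 [\,a'+1,\;2m-1-a\,]\subseteq[1,2m-1].
\]
This interval meets $B_k=[m-k,2m-k-1]$. Indeed, $a'\le m-1$ and $k\le m-2$ give $a'+1\le m\le 2m-k-1$. Also $a\le m-1$ gives $2m-1-a\ge m\ge m-k$.

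Choose two same-sign plus cells, one in column $k$ and one in column $\bar k$, whose separation lies in $B_k$. Lemma~\ref{lem:reflected-column-trigger} then produces a free-label two-gap cycle of length $m+1$, a contradiction.

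\emph{Main obstacle.} The delicate point is the orientation convention in Lemma~\ref{lem:reflected-column-trigger}. The separation must be measured from the column-$k$ cell to the column-$\bar k$ cell, so the displayed range has to be read in that direction. The lifts also need to avoid any cyclic wraparound that would make the separation range nonconvex.

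If the inequalities turn out to fail under the correct orientation, there are two fallbacks. The first is to use the two minus blocks instead, whose separation range is $[m-a+1,m+a'-1]$. The second is to apply the lemma with the roles of $k$ and $\bar k$ exchanged, using $B_{\bar k}=[k+1,m+k]$. One of these should cover the remaining case.
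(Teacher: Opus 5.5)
Your proof is correct and follows the paper's argument. The zero budget forces $z_k=z_{\bar k}=m$ with complementary zero intervals, so each column has the form $0^m\,-^a\,+^{m-a}$, and Lemma~\ref{lem:reflected-column-trigger} then gives the contradiction. The only difference is the choice of cells: the paper uses the two leading minus cells (rows $q$ and $q+m$ in your coordinates), whose separation is exactly $m\in B_k$, while your plus-block range contains the analogous separation $m$. Your orientation (from the column-$k$ cell to the column-$\bar k$ cell) already matches the lemma's proof, so no fallback is needed; in any case, swapping $k$ and $\bar k$ gives nothing new, since $-B_{\bar k}\equiv B_k\pmod{2m}$.
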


\begin{proof}
Suppose $k$ and $\bar k$ are mixed.  By
Lemma~\ref{lem:mixed-column-zero-budget},
\[
        z_k=z_{\bar k}=m,
\]
and their zero sets partition the rows.

A mixed column having exactly $m$ zeros has a rigid form.  Choose a plus
cell followed cyclically by the next minus cell.  The proof of
Lemma~\ref{lem:mixed-column-zero-budget} shows that all $m$ zeros lie between
these two cells.  On the remaining length-$m$ interval there can be no
further plus-to-minus transition.  Thus, after rotation, the column is
\[
        0^m\,-^a\,+^{m-a}
\]
for some $1\le a\le m-1$.

Normalize so that the zero interval of column $k$ is
$\{1,\ldots,m\}$.  Then row $m+1$ of column $k$ is minus.  The zero interval
of column $\bar k$ is its complement, so row $1$ of column $\bar k$ is also
minus.  Their row separation is $m$, and
\[
        m\in[m-k,2m-k-1].
\]
Lemma~\ref{lem:reflected-column-trigger} gives the forbidden cycle.
\end{proof}

\subsection{Exact reflected-wave extraction}
\label{subsec:exact-reflected-extraction}

\begin{theorem}[Exact reflected-wave extraction]
\label{thm:exact-reflected-extraction}
Assume a central antipodal-free Hamiltonian label system has no free-label
two-gap Berge cycle of length $m+1$.

\begin{enumerate}[label=\textup{(\roman*)}]
\item If $m$ is even, there are signs
\[
        \sigma_k\in\{+,-\},
        \qquad \sigma_{\bar k}=-\sigma_k,
\]
such that every nonzero spin cell in column $k$ has sign $\sigma_k$.

\item If $m=2h+1$ is odd, the same conclusion holds on the noncentral
columns $k\ne h$.  The central column $h$ is unrestricted.
\end{enumerate}
There is no exceptional noncentral cell.
\end{theorem}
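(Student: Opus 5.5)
Under the hypothesis that no free-label two-gap cycle of length $m+1$ exists, the plan is to show that every noncentral column is nonmixed, and then to read off the sign relation from Lemma~\ref{lem:reflected-opposite-signs}. Two facts about the zero budget will do most of the work. First, $\sum_k z_k=2m$. Second, Lemma~\ref{lem:mixed-column-zero-budget} says every mixed column has at least $m$ zeros, so at most two columns are mixed.

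\emph{Step 1: exclude mixed noncentral columns.} Suppose a noncentral column $k$ is mixed, so $z_k\ge m$. I would then split into cases.
\begin{enumerate}[label=(\alph*)]
\item If $\bar k$ is also mixed, Lemma~\ref{lem:two-mixed-reflected-impossible} gives a contradiction.
\item Otherwise $\bar k$ is nonmixed, and it has at most $2m-z_k\le m$ zeros. Hence its support has size at least $m$ and carries a single sign $\tau$.
\end{enumerate}
By the rigidity in the proof of Lemma~\ref{lem:two-mixed-reflected-impossible}, column $k$ contains a sign-$\tau$ run of cells (plus or minus). I will pick a cell $p$ of sign $\tau$ in column $k$. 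Lemma~\ref{lem:reflected-column-trigger} then forces the support of $\bar k$ to avoid the $m$-interval $p+B_k$. So that support lies in the complementary $m$-interval, and hence equals it, with $z_{\bar k}=m=z_k$.

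At this point all zeros lie in columns $k$ and $\bar k$. The zero set of $k$ is then the complement of the zero set of $\bar k$, which is the interval $p+B_k$. If column $k$ has two cells of sign $\tau$, the interval is stabilized by their difference, which is impossible exactly as in Lemma~\ref{lem:reflected-opposite-signs}. If there is only one such cell, the column has the form $0^m\,-^{a}\,+^{m-a}$ with $a$ or $m-a$ equal to $1$. In that case I would compare the explicit position of the zero interval with $p+B_k$. I expect an off-by-one mismatch here, since $B_k$ starts at separation $m-k$ while the run structure places $p$ at an endpoint of the nonzero interval. I will either derive a contradiction from this mismatch or, failing that, use Lemma~\ref{lem:half-circle-opposition} between a plus cell of $k$ and the minus cells of $\bar k$ to cover the remaining case.

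\emph{Step 2: nonmixed columns and signs.} Once all noncentral columns are nonmixed, consider reflected pairs $k\ne\bar k$ in which both columns have nonzero cells. Lemma~\ref{lem:reflected-opposite-signs} gives them opposite signs, so I set $\sigma_k$ accordingly. If some column is entirely zero, then $z_k=2m$ and every other column has full support. The sign of its partner is then forced, and $\sigma_k$ can be defined arbitrarily as the opposite sign. This happens only when $m-2\ge2$, which holds in our range. For even $m$ there is no central column $k=\bar k$, which gives (i). For odd $m=2h+1$ the column $k=h$ is self-reflected and is left unrestricted, which gives (ii). The clause ``no exceptional noncentral cell'' is exactly the statement that each noncentral column is nonmixed.

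The main obstacle is the degenerate single-cell subcase of Step 1. There the stabilizer argument is unavailable, and the exact boundaries of $B_k$ have to be checked by hand.
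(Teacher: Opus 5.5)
Your route is the paper's own. For a mixed noncentral column $k$ whose mate $\bar k$ is nonmixed of sign $\tau$, you apply Lemma~\ref{lem:reflected-column-trigger} at a $\tau$-cell $p$ of column $k$, deduce $Z_{\bar k}=p+B_k$ and $z_k=z_{\bar k}=m$, and then finish with Lemma~\ref{lem:reflected-opposite-signs}.

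The gap is that you never reach the final contradiction in general. Your stabilizer argument covers only the case of two $\tau$-cells. The single-$\tau$-cell subcase is left open with two hedged fallbacks, and neither works as stated. There is no off-by-one to analyze. Lemma~\ref{lem:half-circle-opposition} concerns a plus cell and a minus cell in the \emph{same} column. A plus cell of $k$ at row $p$ and a minus cell of $\bar k$ at row $q$ give the bridges $\{p-k,p+1\}$ and $\{q,q+k+1\}$. Whatever free intervals you attach to these bridges, the certificate yields only lengths $2m-2k$ or $2$, never $m+1$ for noncentral $k$. A smaller point: a $\tau$-cell exists in column $k$ simply because $k$ is mixed. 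The rigid form $0^m-^a+^{m-a}$ requires $z_k=m$, which you have not yet established when you invoke it.

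The missing observation closes every case at once and makes the stabilizer argument unnecessary.
\begin{itemize}
\item Since $1\le k\le m-2$, one has $B_k=[m-k,2m-k-1]^+\subseteq[2,2m-2]$. So $0\notin B_k$, and hence $p\notin p+B_k=Z_{\bar k}$.
\item Because $Z_k$ and $Z_{\bar k}$ partition the rows, $p\in Z_k$. Explicitly, $Z_k=[p-k,p+m-k-1]^+\ni p$.
\item This contradicts $\rho_p(k)=\tau\neq0$.
\end{itemize}
Said differently, row $p$ is nonzero in both columns $k$ and $\bar k$, yet its unique zero must lie in one of them. This is exactly how the paper finishes.

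With this line inserted, your Step~1 is complete. Because it handles each mixed noncentral column directly through cases (a) and (b), it does not need the paper's preliminary exclusion of two mixed columns. Step~2 then agrees with the paper.
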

\begin{proof}
First we show that no two columns can be mixed.  Their zero counts would
be at least $m$ each, so every other column would have full support.  At
least one of the two mixed columns is noncentral; choose such a column
$k$.  If its mate $\bar k$ is the other mixed column, then
Lemma~\ref{lem:two-mixed-reflected-impossible} applies.  Otherwise
$\bar k$ is one of the full-support columns and has a fixed nonzero sign.
The mixed column $k$ contains that same sign, and a row of $\bar k$ at a
separation in $B_k$ supplies the reflected-column trigger.  This is again a
contradiction.  Hence at most one column is mixed.  In particular, there is
at most one mixed noncentral column.

Suppose that $k$ is the unique mixed noncentral column.  Its mate $\bar k$
is nonmixed.  Moreover it is not all zero: by
Lemma~\ref{lem:mixed-column-zero-budget}, $z_k\ge m$, whereas an all-zero
mate would have $z_{\bar k}=2m$, contradicting
$\sum_\ell z_\ell=2m$.  Thus $\bar k$ has a unique nonzero sign $\tau$.
Choose a row $p$ in column $k$ with sign $\tau$.  The reflected-column trigger forces the support of
$\bar k$ into the complement of the length-$m$ interval $p+B_k$.  Hence
$z_{\bar k}\ge m$.  Together with
Lemma~\ref{lem:mixed-column-zero-budget}, this gives
\[
        z_k=z_{\bar k}=m,
\]
and every other column has no zero.  The support of $\bar k$ has size $m$
and is therefore exactly the complement of $p+B_k$.

Every row has exactly one zero, so the zero sets of $k$ and $\bar k$
partition the rows.  Since $(p,k)$ is nonzero, one must have
$p\in Z_{\bar k}$.  On the other hand $0\notin B_k$, so
$p\notin p+B_k$ and hence $p$ belongs to the complement of $p+B_k$, which
is the support of $\bar k$.  This contradiction proves that no noncentral
column is mixed.  In odd parity it also rules out a noncentral mixed column
coexisting with a mixed central column, since the former's mate would have
full support.

Assign to every nonzero relevant column its unique sign.
Lemma~\ref{lem:reflected-opposite-signs} makes the signs opposite on each
reflected pair.  If one member of a reflected pair is all zero, its mate
cannot also be all zero, since two all-zero columns would already contain
$4m$ zero cells while the whole array contains only $2m$.  The mate
therefore has a defined nonzero sign; choose the base sign of the all-zero
column to be its opposite.  For odd $m$ omit the fixed central column.
This proves both parts.
\end{proof}

The signs in Theorem~\ref{thm:exact-reflected-extraction} define a
reflected translation wave.  If $\sigma_k=+$ include the offset $-k$, and
if $\sigma_k=-$ include $m-k$.

\begin{definition}[Reflected base waves]\label{def:reflected-base-waves}
A set $E\subseteq\Z_{2m}$ is a \emph{full complementary-transversal wave}
if
\[
 E=1-E,
 \qquad E\cap(E+m)=\varnothing,
 \qquad E\cup(E+m)=\Z_{2m},
 \qquad \{0,1\}\subseteq E.
\]
A set $E\subseteq\Z_{2m}$ is a \emph{zero-free reflected wave} if
\[
 E=1-E,
 \qquad E\cap(E+m)=\varnothing,
 \qquad \{0,1\}\subseteq E,
\]
and one specified antipodal pair is omitted:
\[
 \Z_{2m}\setminus\bigl(E\cup(E+m)\bigr)=\{\xi,\xi+m\}
\]
for some $\xi$.  When $m$ is odd, the omitted pair is the
reflection-invariant pair represented by the central spin column.
\end{definition}

\begin{corollary}[Exact reflected-wave deletion normal form]
\label{cor:one-extra-deletion-normal-form}
Under the hypotheses of Theorem~\ref{thm:exact-reflected-extraction}, there
is a reflection-symmetric base set $E$ with the following properties.

\begin{enumerate}[label=\textup{(\roman*)}]
\item If $m$ is even, $|E|=m$, $E$ is a full
complementary-transversal wave, and every translated row satisfies
\[
        e_p-p=E\setminus R_p,
        \qquad |R_p|=1.
\]

\item If $m=2h+1$ is odd, $|E|=m-1$ and $E$ is the zero-free reflected
wave built from one representative of every noncentral reflected pair.
For each row $p$, let $R_p=\varnothing$ when the central column is zero,
and otherwise let $R_p$ be the singleton consisting of the deleted
noncentral base offset.  Then $|R_p|\le1$, and the row is obtained from
$E\setminus R_p$ by possibly inserting its central-column offset.
\end{enumerate}
Inserted offsets can only create bridge covers, and in either parity
\[
        \sum_{p\in\Z_{2m}}|R_p|\le2m.
\]
\end{corollary}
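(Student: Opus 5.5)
We read the corollary off Theorem~\ref{thm:exact-reflected-extraction} by translating the spin array back into translated rows $e_p-p$. First we fix the offset dictionary. For $1\le k\le m-2$, the cell $\rho_p(k)$ records which member of the antipodal pair $\{-k,m-k\}$ lies in $e_p-p$: the sign $+$ means $-k$, the sign $-$ means $m-k$, and $0$ means neither. The anchors give $\{0,1\}\subseteq e_p-p$. Their antipodes $m,m+1$ are excluded. The remaining offsets are exactly the $m-2$ pairs indexed by $k$, since $-k$ ranges over $\{2-m,\ldots,-1\}=\{m+2,\ldots,2m-1\}$ and $m-k$ ranges over $\{2,\ldots,m-1\}$.

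Define $E=\{0,1\}\cup\{\text{offset }\sigma_k\text{ of column }k\}$, taking all columns when $m$ is even and all noncentral columns when $m$ is odd.

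The key symmetry check is $E=1-E$. The reflection $x\mapsto1-x$ swaps $0$ and $1$. It sends $-k$ to $1+k=m-\bar k$, and it sends $m-k$ to $1-m+k\equiv m+1+k=-\bar k$ modulo $2m$. Hence a $+$ in column $k$ corresponds to a $-$ in column $\bar k$, and vice versa. So $\sigma_{\bar k}=-\sigma_k$, which is exactly what the theorem supplies, gives $E=1-E$.

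Antipodal-freeness of $E$ holds by construction, since $E$ takes one member from each pair. When $m$ is even, every pair is covered, so $|E|=m$ and $E$ is a full complementary-transversal wave. When $m=2h+1$ is odd, the central pair $\{-h,m-h\}=\{-h,h+1\}$ is reflection invariant, because $1-(-h)=h+1$. This pair is the omitted pair, so $|E|=m-1$ and $E$ is a zero-free reflected wave.

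Next we read off the rows. Every nonzero cell in a relevant column has sign $\sigma_k$ by the theorem, so on relevant columns $e_p-p$ agrees with $E$ except at zero cells. For even $m$, each row has exactly one zero, say in column $k_p$. Then $e_p-p=E\setminus R_p$ with $R_p$ the single offset of column $k_p$ in $E$, and $|R_p|=1$. For odd $m$ there are two cases. If the row's zero lies in the central column, then $R_p=\varnothing$ and $e_p-p=E$. Otherwise $R_p$ is the deleted noncentral offset, and the row additionally contains its central offset, whichever sign it takes. In both cases $|R_p|\le1$.

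The remark that inserted offsets only create bridge covers holds because adding elements to a row can only enlarge the set of pairs the row covers, and it never removes a certificate. The sum bound follows from $\sum_p|R_p|\le\sum_k z_k=2m$.

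The main point needing care is the bookkeeping of the reflection identity modulo $2m$, together with confirming that the all-zero-column convention in the theorem still yields a well-defined $\sigma_k$. The theorem already handles that convention: it assigns an all-zero column the sign opposite to its mate. Everything else is direct translation.
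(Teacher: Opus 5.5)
Your proposal is correct and follows the same route as the paper: you fix the offset dictionary $\{-k,m-k\}$ for column $k$, check that $x\mapsto 1-x$ carries column $k$'s pair to column $\bar k$'s pair so that $\sigma_{\bar k}=-\sigma_k$ yields $E=1-E$, and then read each row off its unique zero cell, exactly as the paper does. Your congruence $1-m+k\equiv m+1+k$ is correct but unnecessary, since $1-(m-k)=k+1-m=-\bar k$ holds exactly.
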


\begin{proof}
For $1\le k\le m-2$, let
\[
        \Omega_k:=\{-k,m-k\}
\]
be the antipodal offset pair represented by spin column $k$.  These
$m-2$ pairs are distinct and are precisely the antipodal pairs other than
the two anchor pairs $\{0,m\}$ and $\{1,m+1\}$.  The reflection
$\tau(x)=1-x$ sends $\Omega_k$ to $\Omega_{\bar k}$, because
\[
        \tau(-k)=k+1=m-\bar k,
        \qquad
        \tau(m-k)=k+1-m=-\bar k.
\]
For every relevant column define its selected base offset by
\[
 b_k=
 \begin{cases}
 -k,&\sigma_k=+,\\
 m-k,&\sigma_k=-.
 \end{cases}
\]
The relation $\sigma_{\bar k}=-\sigma_k$ gives
$b_{\bar k}=1-b_k$.

If $m$ is even, set
\[
        E:=\{0,1\}\cup\{b_k:1\le k\le m-2\}.
\]
This set contains exactly one point from every antipodal pair, has size
$m$, and satisfies $E=1-E$.  Hence it is a full
complementary-transversal wave.  By
Theorem~\ref{thm:exact-reflected-extraction}, every nonzero cell in column
$k$ selects the offset $b_k$.  Each row has exactly one zero, so its
translated edge contains all elements of $E$ except the single base offset
belonging to that zero column.  This proves part~(i).

Now let $m=2h+1$ and omit the fixed central column $h=\bar h$.  Put
\[
        E:=\{0,1\}\cup\{b_k:1\le k\le m-2,\ k\ne h\}.
\]
The noncentral columns represent all nonanchor antipodal pairs except
$\Omega_h$.  Thus $|E|=m-1$, $E$ is antipodal-free, and
\[
 \Z_{2m}\setminus\bigl(E\cup(E+m)\bigr)=\Omega_h.
\]
The pair $\Omega_h$ is reflection-invariant, while the opposite-sign rule
on every other reflected pair gives $E=1-E$.  Hence $E$ is the asserted
zero-free reflected wave.  If a row is zero in the central column, then its
unique zero occurs there and every noncentral column contributes its base
offset, so the translated row is exactly $E$ and $R_p=\varnothing$.  If
the central cell is nonzero, it contributes one offset from the omitted
pair $\Omega_h$; the row's unique zero then lies in a noncentral column and
deletes exactly that column's base offset from $E$.  This deleted offset is
the unique element of $R_p$.  Inserted central offsets can only create
additional bridge covers.  In both parities $|R_p|\le1$ for every one of
the $2m$ rows, and therefore
$\sum_p|R_p|\le2m$.
\end{proof}

\section{Cover surplus for reflected waves}\label{sec:cover-surplus}

We now prove that the reflected-wave normal form obtained in
Corollary~\ref{cor:one-extra-deletion-normal-form} always contains a
surviving two-gap template when $m$ is sufficiently large.

\subsection{Bridge candidates and deletion degrees}

Let $E\subseteq\Z_{2m}$ satisfy $E=1-E$ and contain $0,1$.  For
$1\le d\le m$ define
\[
 C_d=\{i\in[0,m]:\{-i,d-i\}\subseteq E\},
\]
\[
 D_d=\{j\in[0,m]:\{d-j,m+1-j\}\subseteq E\}.
\]
For a rotation $x$, an ordered pair $(i,j)\in C_d\times D_d$ with
$i\ne j$ uses the labels $e_{x+i}$ and $e_{x+j}$ on the bridges
$\{x,x+d\}$ and $\{x+d,x+m+1\}$, respectively.  The corresponding free
intervals are $[x,x+d-1]^{+}$ and $[x+d,x+m]^{+}$, so the two-gap certificate
produces length $m+1$.  Put
\[
 Q_d=|C_d||D_d|-|C_d\cap D_d|,
 \qquad Q(E)=\sum_{d=1}^{m}Q_d.
\]
Thus there are $2mQ(E)$ rotated bridge candidates.

For $\alpha\in E\setminus\{0,1\}$ define
\[
\begin{aligned}
 \operatorname{del}_E(\alpha)=\sum_{d=1}^{m}
 &\sum_{\substack{i\in C_d\\ \alpha\in\{-i,d-i\}}}
 \bigl(|D_d|-\1_{i\in D_d}\bigr)\\
 +\sum_{d=1}^{m}
 &\sum_{\substack{j\in D_d\\ \alpha\in\{d-j,m+1-j\}}}
 \bigl(|C_d|-\1_{j\in C_d}\bigr),
\end{aligned}
\]
and put
\[
        \Delta_{\mathrm{del}}(E)
        :=\max_{\alpha\in E\setminus\{0,1\}}\operatorname{del}_E(\alpha).
\]

\begin{lemma}[Global candidate survival]\label{lem:global-candidate-survival}
For every $p\in\Z_{2m}$, let $R_p\subseteq E\setminus\{0,1\}$ and assume
\[
        E\setminus R_p\subseteq e_p-p;
\]
arbitrary additional offsets may also lie in $e_p-p$.  If
\[
        \sum_{p\in\Z_{2m}}|R_p|\le2m
        \qquad\text{and}\qquad Q(E)>\Delta_{\mathrm{del}}(E),
\]
then the labels contain a free-label two-gap Berge cycle of length $m+1$.
\end{lemma}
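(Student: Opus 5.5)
A double-counting argument over the $2mQ(E)$ rotated bridge candidates should suffice. A candidate is a triple $(x,d,(i,j))$ with $x\in\Z_{2m}$, $1\le d\le m$, $i\in C_d$, $j\in D_d$, $i\ne j$. It \emph{survives} if $e_{x+i}\supseteq\{x,x+d\}$ and $e_{x+j}\supseteq\{x+d,x+m+1\}$. By the discussion preceding the definition of $Q_d$, a surviving candidate yields, via Proposition~\ref{prop:free-label}, a free-label two-gap cycle of length $m+1$. The two free intervals are $[x,x+d-1]^+$ and $[x+d,x+m]^+$, of lengths $d$ and $m+1-d$. The labels $x+i$ and $x+j$ lie in their union since $i,j\in[0,m]$, and they are distinct since $i\ne j$. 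The $d=m$ case (second interval of length $1$) needs no separate treatment beyond checking disjointness, which holds because the two intervals are consecutive and their total length is $m+1\le2m$. So it suffices to show that fewer than $2mQ(E)$ candidates are killed.

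A candidate can fail only through the first bridge or the second. The first bridge requires $\{x,x+d\}\subseteq e_{x+i}$, that is, $\{-i,d-i\}\subseteq e_{x+i}-(x+i)$. Since $i\in C_d$ gives $\{-i,d-i\}\subseteq E$, and $E\setminus R_{x+i}\subseteq e_{x+i}-(x+i)$, failure forces some $\alpha\in R_{x+i}\cap\{-i,d-i\}$. Such an $\alpha$ lies in $E\setminus\{0,1\}$ by hypothesis on $R_p$. Similarly, failure of the second bridge forces some $\alpha\in R_{x+j}\cap\{d-j,m+1-j\}$. Additional offsets in $e_p-p$ only help, so they can be ignored.

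Now charge each killed candidate to a pair $(p,\alpha)$ with $\alpha\in R_p$. For fixed $p$ and $\alpha$, count the candidates killed through row $p$ by $\alpha$.

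\emph{First bridge.} Here $x=p-i$ with $i\in C_d$ and $\alpha\in\{-i,d-i\}$. Once $(d,i)$ is chosen, $x$ is determined, and $j$ ranges over $D_d\setminus\{i\}$, giving $|D_d|-\1_{i\in D_d}$ choices.

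\emph{Second bridge.} Symmetrically, $x=p-j$, and $i$ ranges over $C_d\setminus\{j\}$.

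Summing these two counts gives exactly $\operatorname{del}_E(\alpha)\le\Delta_{\mathrm{del}}(E)$. A union bound over all pairs $(p,\alpha)$ with $\alpha\in R_p$ then bounds the number of killed candidates by
\[
\sum_p|R_p|\,\Delta_{\mathrm{del}}(E)\le2m\,\Delta_{\mathrm{del}}(E)<2mQ(E),
\]
so some candidate survives.

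The only delicate point is confirming that the candidate count is exactly $2mQ(E)$, i.e.\ that each rotated candidate is counted once per $(x,d,i,j)$ and that the definition of $\operatorname{del}_E$ matches the charging without undercounting. In particular, when $\alpha$ equals both $-i$ and $d-i$ (which would force $d=0$ and so cannot happen), or when both bridges fail, the union bound already overcounts harmlessly. I would verify this bookkeeping carefully. Everything else is immediate.
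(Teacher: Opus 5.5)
Your proposal is correct and follows the same route as the paper. Both arguments count the $2mQ(E)$ rotated candidates and note that deleting $\alpha$ in row $p$ pins the rotation to $x=p-i$ (first bridge) or $x=p-j$ (second bridge), so it kills at most $\operatorname{del}_E(\alpha)\le\Delta_{\mathrm{del}}(E)$ candidates; the union bound $2m\Delta_{\mathrm{del}}(E)<2mQ(E)$ then finishes. Your version also spells out checks the paper leaves implicit: the cover labels lie in the free intervals and are distinct, and your charging matches the two sums in $\operatorname{del}_E$ exactly.
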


\begin{proof}
For a first-bridge deletion in row $p$, a candidate with first-cover
index $i$ has the uniquely determined rotation $x=p-i$; for a
second-bridge deletion with second-cover index $j$, the rotation is
uniquely $x=p-j$.  Therefore a deletion of the offset $\alpha$ in a fixed
row kills at most
$\operatorname{del}_E(\alpha)\le\Delta_{\mathrm{del}}(E)$ rotated candidates, counted with
multiplicity.  Hence all row deletions kill at most $2m\Delta_{\mathrm{del}}(E)$
candidates.  Since the total supply is $2mQ(E)$, at least one candidate
survives.  Inserted offsets can only create further covers.
\end{proof}

Put
\[
        I=\{i\in[0,m]:-i\in E\},
        \qquad c_d=|C_d|.
\]

\begin{lemma}[Folded representative structure]
\label{lem:folded-representative-structure}
For either reflected-wave type in
Definition~\ref{def:reflected-base-waves}, one has
\[
        0\in I,
        \qquad m-1,m\notin I,
        \qquad E=(-I)\mathbin{\dot\cup}(I+1).
\]
If
\[
        \mathcal A:=I\setminus\{0\},
        \qquad \Pi_a:=\{-a,a+1\}\quad(a\in\mathcal A),
\]
then
\[
        E=\{0,1\}\mathbin{\dot\cup}
          \mathop{\dot\bigcup}_{a\in\mathcal A}\Pi_a.
\]
In particular, every deletable nonanchor base offset belongs to a unique
pair $\Pi_a$, and
\[
        E_{I\setminus\{a\}}=E\setminus\Pi_a
        \qquad(a\in\mathcal A),
\]
where $E_J=(-J)\cup(J+1)$.
\end{lemma}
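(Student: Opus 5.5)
The plan is to work only with the two defining symmetries of a reflected wave: $E=1-E$ and $E\cap(E+m)=\varnothing$. The reflection $\tau(x)=1-x$ is an involution of $\Z_{2m}$ with no fixed points, since $2x\equiv1$ has no solution in $\Z_{2m}$. Hence $E$, being $\tau$-invariant, is a disjoint union of $\tau$-orbits $\{x,1-x\}$.

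First we choose canonical representatives for these orbits. Every residue class has a unique lift in $[-(m-1),m]$. The map $x\mapsto1-x$ sends this window to $[1-m,m]$, i.e.\ to itself, and it exchanges the nonpositive part $[-(m-1),0]$ with the positive part $[1,m]$. So each $\tau$-orbit has exactly one element of the form $-i$ with $i\in[0,m-1]$, and its partner is $1-(-i)=i+1$.

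We would also check the residue $m$, whose lift $m$ is positive with partner $1-m$, i.e.\ $i=m-1$. Thus every orbit is $\{-i,i+1\}$ for a unique $i\in[0,m-1]$. The $i=m$ representative $-m=m$ falls into the orbit with $i=m-1$, which needs a small separate check.

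Next we use the anchors and antipodal-freeness. Since $0,1\in E$ and $\{0,1\}=\{-0,0+1\}$, we get $0\in I$. For $m-1$: the orbit $\{-(m-1),m\}=\{m+1,m\}$ is antipodal to the anchor orbit $\{1,0\}$. Because $E\cap(E+m)=\varnothing$ and $0,1\in E$, neither $m$ nor $m+1$ lies in $E$, so $m-1\notin I$. Likewise $-m=m\notin E$ gives $m\notin I$.

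We then write $E$ as the union over $i\in I$ of the orbits $\{-i,i+1\}$. For $i\in[0,m-1]$ the orbit containing $-i$ is $\{-i,i+1\}$, and by $\tau$-invariance $-i\in E$ if and only if $i+1\in E$. Hence $E=(-I)\cup(I+1)$. The union is disjoint because distinct orbits are disjoint and $-i\neq i+1$.

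Separating out $i=0$ gives $E=\{0,1\}\sqcup\bigsqcup_{a\in\mathcal A}\Pi_a$. Each nonanchor offset lies in exactly one $\Pi_a$. Since the decomposition is disjoint, $E_{I\setminus\{a\}}=E\setminus\Pi_a$ follows directly.

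The main care point is the bookkeeping of lifts. The window $[0,m]$ for $I$ contains $m$, whose image $-m$ coincides with $m$, so $m$ and $m-1$ label overlapping residues. Antipodal-freeness against the anchors is what removes both of them cleanly. The argument applies to both wave types in Definition~\ref{def:reflected-base-waves}, since it never uses whether $E\cup(E+m)$ is everything.
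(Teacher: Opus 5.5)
Your proof is correct and follows essentially the paper's route: the anchors plus antipodal-freeness exclude $m$ and $m+1$ from $E$, so $m-1,m\notin I$, and the reflection $E=1-E$ pairs $-i$ with $i+1$. The only difference is cosmetic: you get disjointness from the orbit structure of the fixed-point-free involution $x\mapsto1-x$ on the fundamental window $[-(m-1),0]$, whereas the paper checks directly that $-i=j+1$ would force $i+j=2m-1$, hence $\{i,j\}=\{m-1,m\}$, which is excluded.
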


\begin{proof}
The anchor $0\in E$ gives $0\in I$.  Since $E$ is antipodal-free and
contains $0,1$, neither $m$ nor $m+1$ belongs to $E$.  Therefore
$m\notin I$ and $m-1\notin I$.

If $x\in E\cap\{0,m,m+1,\ldots,2m-1\}$, then $x=-i$ for a unique
$i\in[0,m]$, and hence $i\in I$.  If instead
$x\in E\cap[1,m-1]$, reflection gives $1-x\in E$, so
$x=(x-1)+1$ with $x-1\in I$.  Thus $E=(-I)\cup(I+1)$.
If $-i=j+1$ for $i,j\in I$, then $i+j=2m-1$.  Because
$i,j\in[0,m]$, this would force
$\{i,j\}=\{m-1,m\}$, which is impossible.  The union is therefore
disjoint.

Removing $a$ from $I$ removes exactly the two uniquely represented offsets
$-a$ and $a+1$, proving all remaining assertions.
\end{proof}

\begin{lemma}[Profile folding]\label{lem:profile-folding}
For every $d\in[1,m]$, the map $j\mapsto m-j$ is a bijection
$D_d\to C_{m+1-d}$.  Consequently
\[
 P(E):=\sum_{d=1}^{m}|C_d||D_d|
      =\sum_{d=1}^{m}c_d c_{m+1-d}.
\]
For both reflected-wave types arising in
Corollary~\ref{cor:one-extra-deletion-normal-form},
\[
        Q(E)\ge\frac12P(E).
\]
\end{lemma}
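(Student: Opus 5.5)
The bijection is a direct substitution using the reflection $E=1-E$, which also gives the identity for $P(E)$. The inequality $Q(E)\ge\frac12 P(E)$ then reduces to bounding the diagonal corrections $|C_d\cap D_d|$ by half of $|C_d||D_d|$, using the anchors $0$ and $m$ as guaranteed extra members of $C_d$ or $D_d$.

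\emph{Step 1: the bijection.} Fix $d\in[1,m]$ and put $d'=m+1-d$, $i=m-j$. Then $j\in[0,m]$ if and only if $i\in[0,m]$. The defining conditions become
\[
 -i=j-m,\qquad d'-i=1-(d-j).
\]
Since $E=1-E$, we have $d-j\in E$ if and only if $1-(d-j)\in E$, and $m+1-j\in E$ if and only if $1-(m+1-j)=j-m\in E$. Hence $\{d-j,m+1-j\}\subseteq E$ holds exactly when $\{-i,d'-i\}\subseteq E$, so $j\mapsto m-j$ maps $D_d$ bijectively onto $C_{m+1-d}$. In particular $|D_d|=c_{m+1-d}$, and summing over $d$ gives the formula for $P(E)$.

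\emph{Step 2: termwise bound.} Write $Q_d=|C_d||D_d|-|C_d\cap D_d|$. It suffices to show $|C_d\cap D_d|\le\frac12|C_d||D_d|$ for each $d$, apart from at most one controlled exception. Two anchor observations do this.
\begin{itemize}
\item Since $0,1\in E$, we have $0\in C_d$ iff $d\in E$, and $m\in D_d$ iff $d+m\in E$ (because $\{d-m,1\}\subseteq E$).
\item Since $m,m+1\notin E$ (as in the proof of Lemma~\ref{lem:folded-representative-structure}), we get $0\notin D_d$ and $m\notin C_d$. So neither anchor index ever lies in $C_d\cap D_d$.
\end{itemize}
Now suppose $d\in E$. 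Then $0\in C_d\setminus D_d$, so $|C_d|\ge|C_d\cap D_d|+1$. If $C_d\cap D_d=\varnothing$ there is nothing to prove. Otherwise $|C_d|\ge2$ and $|D_d|\ge|C_d\cap D_d|$, so $|C_d||D_d|\ge2|C_d\cap D_d|$. The case $d+m\in E$ is symmetric, with the roles of $C_d$ and $D_d$ exchanged via the extra element $m\in D_d\setminus C_d$.

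\emph{Step 3: which $d$ are covered.} For the full complementary-transversal wave ($m$ even), $E$ meets every antipodal pair, so every $d\in[1,m]$ has $d\in E$ or $d+m\in E$, and summing the termwise bound gives $Q(E)\ge\frac12P(E)$.

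For the zero-free reflected wave ($m$ odd), the argument fails only for the single $d\in[1,m]$ with $d\equiv\xi\pmod m$. Reflection invariance of $\{\xi,\xi+m\}$ forces $2\xi\equiv1-m$, so this exceptional $d$ is the self-folded index $d=(m+1)/2$. For this $d$ I would first try to show that $C_d\cap D_d$ is small directly. An element $i$ of it needs $-i$, $d-i$ and $m+1-i$ in $E$, so both $i$ and $m-i$ lie in $I$. Combining this with the fact that $d-i$ avoids $\{\xi,\xi+m\}$ should show that every such $i$ already has a partner making $|C_d|\ge2$ or $|D_d|\ge2$, and then Step 2 applies.

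\emph{Main obstacle.} I expect the exceptional term $d=(m+1)/2$ to be the main difficulty. If the direct argument above fails, the fallback is to absorb its deficit using slack elsewhere. The term $d=1$ already contains $0\in C_1$ with $|D_1|\ge1$ (via $\{0,1\}$ and the reflection), and every term where both $|C_d|$ and $|D_d|$ are at least $2$ has strict surplus $|C_d||D_d|-2|C_d\cap D_d|\ge|C_d||D_d|-2\min(|C_d|,|D_d|)\ge0$, with slack whenever $|C_d\cap D_d|<\min(|C_d|,|D_d|)$. One such term suffices to pay for the exceptional deficit, which is at most $|C_d\cap D_d|$.
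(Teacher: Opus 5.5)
\textbf{Gap at the self-folded index.} Your Step 1 (the bijection) is correct. For the full wave, your Steps 2--3 form a complete proof: the anchor $0\in C_d\setminus D_d$ when $d\in E$, and $m\in D_d\setminus C_d$ when $d+m\in E$, excludes the only bad configuration, namely $C_d=D_d$ a singleton. This is a slightly more uniform substitute for the paper's argument, which uses $0,d-1\in C_d$, then $|D_d|=|C_{m+1-d}|$, and treats the endpoints separately via $C_m=D_1=\varnothing$.

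The gap is at the self-folded index $d_0=(m+1)/2$ of a zero-free wave. You flag this case but do not settle it.
\begin{itemize}
\item Your direct route ends with ``should show''. The ingredient you propose, that $d_0-i$ avoids $\{\xi,\xi+m\}$, is automatic (since $d_0-i\in E$) and does not produce the needed second element.
\item Your fallback is invalid. In fact $D_1=\varnothing$, because $j\in D_1$ would put the antipodal pair $\{1-j,\,m+1-j\}$ inside the antipodal-free set $E$. So the $d=1$ cell contributes nothing to $P(E)$ or $Q(E)$.
\item You also give no argument that some other cell has surplus at least $\tfrac12$ to absorb a possible deficit at $d_0$.
\end{itemize}

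\textbf{How to close it.} The missing step is one line, using your own Step 1 at $d=d_0$. Since $m+1-d_0=d_0$, the map $j\mapsto m-j$ is a bijection $D_{d_0}\to C_{d_0}$. It has no fixed point, because $j=m-j$ would force $2j=m$ with $m$ odd.

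Hence if $i\in C_{d_0}\cap D_{d_0}$, then $m-i\in C_{d_0}$ and $m-i\neq i$. This gives $|C_{d_0}|\ge2$, and therefore $|C_{d_0}||D_{d_0}|\ge2|D_{d_0}|\ge2|C_{d_0}\cap D_{d_0}|$. Equivalently, $C_{d_0}$ and $D_{d_0}$ cannot be the same singleton.

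This is exactly how the paper handles the case. With it the cellwise inequality holds for every $d$, and no slack transfer between cells is needed.
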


\begin{proof}
If $j\in D_d$ and $i=m-j$, then, using $E=1-E$,
\[
        -i=1-(m+1-j)\in E,
        \qquad m+1-d-i=1-(d-j)\in E.
\]
Thus $i\in C_{m+1-d}$.  Conversely, suppose that
$i\in C_{m+1-d}$ and put $j=m-i$.  Then
\[
        -i\in E,
        \qquad m+1-d-i\in E.
\]
Using $E=1-E$, we obtain
\[
        m+1-j=i+1=1-(-i)\in E
\]
and
\[
        d-j=d-m+i=1-(m+1-d-i)\in E.
\]
Hence $j\in D_d$.  Since $j\mapsto m-j$ is its own inverse, it is a
bijection from $D_d$ to $C_{m+1-d}$, proving the formula for $P$.

We prove the cellwise inequality
\[
        2|C_d\cap D_d|\le |C_d||D_d|.
\]
First, $C_m=\varnothing$, because membership of $i$ in $C_m$ would require
$E$ to contain the antipodal pair $\{-i,m-i\}$.  By the bijection just
proved, $D_1=\varnothing$.  Hence the endpoint cells cause no difficulty.

Suppose first that $E$ is a full wave and $2\le d\le m-1$.  Put
$\bar d=m+1-d$.  Exactly one of $d$ and $\bar d$ lies in $E$.  Indeed, if
$d\in E$, then $1-d\in E$ by reflection, so its antipode $\bar d$ is not
in $E$.  Conversely, if $d\notin E$, then $d+m\in E$ by antipodal
transversality, and reflection gives $\bar d\in E$.  If $d\in E$, then
\[
        0,d-1\in C_d,
\]
because $\{0,d\}\subseteq E$ and
$\{1-d,1\}\subseteq E$.  Thus $|C_d|\ge2$.  If $\bar d\in E$, the same
argument gives $|C_{\bar d}|\ge2$.  Since
$|D_d|=|C_{\bar d}|$, the exceptional case in which both factors equal
one cannot occur.

Now let $E$ be a zero-free reflected wave, and write its omitted antipodal
pair as $\{d_0,d_0+m\}$ with $d_0\in[1,m]$.  The pair is invariant under the
reflection $x\mapsto1-x$.  That reflection cannot fix either member, since
$2x\equiv1\pmod{2m}$ has no solution.  Therefore
\[
        1-d_0\equiv d_0+m\pmod{2m},
        \qquad 2d_0\equiv1-m\pmod{2m}.
\]
The latter congruence forces $m$ to be odd and, for $d_0\in[1,m]$, gives
\[
        d_0=\frac{m+1}{2}.
\]
For $d\ne d_0$, the preceding full-wave argument is unchanged.  At $d=d_0$ one has $m+1-d=d$, and the bijection
$D_d\to C_d$ is $j\mapsto m-j$.  This involution has no fixed point because
$m$ is odd.  Therefore, if both $C_d$ and $D_d$ are singletons, they are
disjoint, so $C_d\cap D_d=\varnothing$.

The cellwise inequality follows in every case: outside the singleton case
it is immediate from
$|C_d\cap D_d|\le\min\{|C_d|,|D_d|\}$.  Summing over $d$ yields
$Q(E)\ge P(E)/2$.
\end{proof}

\subsection{A folded cover-mass theorem}

Fix $\epsilon\in\{1,2\}$, put
\[
        N=2H+\epsilon,
        \qquad T=N+2,
\]
and let $r\in[H]$.  For a partition
\[
        L\sqcup U=[H]\setminus\{r\}
\]
define
\[
        J=\{0\}\cup L\cup\{N-u:u\in U\}.
\]
For $d\ge1$ set
\[
\begin{aligned}
 A_d(J)&=\#\{(x,y)\in J^2:x-y=d\},\\
 B_d(J)&=\#\{(x,y)\in J^2:x+y=d-1\}.
\end{aligned}
\]
and
\[
 P_N(J)=\sum_{d=1}^{N+1}
        \bigl(A_d(J)+B_d(J)\bigr)
        \bigl(A_{T-d}(J)+B_{T-d}(J)\bigr).
\]
With $N,J$ fixed, write
\[
\begin{aligned}
 AA(J)&=\sum_{d=1}^{N+1}A_d(J)A_{T-d}(J),&
 AB(J)&=\sum_{d=1}^{N+1}A_d(J)B_{T-d}(J),\\
 BA(J)&=\sum_{d=1}^{N+1}B_d(J)A_{T-d}(J),&
 BB(J)&=\sum_{d=1}^{N+1}B_d(J)B_{T-d}(J).
\end{aligned}
\]
The substitution $d\mapsto T-d$ gives $BA(J)=AB(J)$, and therefore
\[
        P_N(J)=AA(J)+2AB(J)+BB(J).
\]
For $W\subseteq[H]$ write
\[
        \mathsf T(W)=\#\{(x,y)\in W^2:x+y+1\in W\}.
\]

We use the following stability consequence, recorded as Lemma~2.2
in~\cite{BLST}.  As noted there, it follows from the structure theorem of
Deshouillers, Freiman, S\'os and Temkin~\cite{DFST} together with Green's
removal lemma~\cite{GreenRemoval}.

\begin{lemma}[Schur-triple stability near density $1/2$]
\label{lem:schur-stability-consequence}
Let $V_n\subseteq[n]$ satisfy
\[
        |V_n|\ge n/2-o(n)
\]
and contain $o(n^2)$ ordered solutions of $x+y=z$.  Put
\[
 O_n=\{1\le x\le n:x\text{ is odd}\},
 \qquad U_n=[\lfloor n/2\rfloor+1,n].
\]
Then
\[
        \min\bigl\{|V_n\triangle O_n|,\ |V_n\triangle U_n|\bigr\}=o(n).
\]
The closer model may depend on $n$.
\end{lemma}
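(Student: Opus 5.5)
The plan is to reduce the statement to the exact structure theorem for sum-free sets, in two steps: first delete $o(n)$ elements of $V_n$ to make it sum-free, then classify the resulting large sum-free set. Because the conclusion is asymptotic, I will phrase everything quantitatively. Given $\eta>0$, it suffices to find $\delta>0$ and $n_1$ such that the following holds for all $n\ge n_1$: if $|V_n|\ge n/2-\delta n$ and $V_n$ has at most $\delta n^2$ ordered Schur triples, then $\min\{|V_n\triangle O_n|,|V_n\triangle U_n|\}\le\eta n$. A diagonal choice of $\delta=\delta(\eta)$ then yields the $o(\cdot)$ formulation, with the nearer model allowed to depend on $n$.

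\emph{Step 1 (removal).} Embed $[n]$ into $\Z_{2n+1}$. In this group a solution of $x+y\equiv z$ with $x,y,z\in[n]$ is a genuine integer solution, because $x+y\le 2n$ cannot wrap around. Apply Green's arithmetic removal lemma~\cite{GreenRemoval} to the equation $x+y-z=0$ over $\Z_{2n+1}$, with all three variables ranging over $V_n$. It gives $\gamma(\delta)\to0$ as $\delta\to0$ and a set $X\subseteq V_n$ with $|X|\le\gamma(\delta)n$ such that $V':=V_n\setminus X$ contains no solution of $x+y=z$. Solutions with $x=y$ are also destroyed, so $V'$ is sum-free in the ordinary sense, and $|V'|\ge n/2-(\delta+\gamma)n$. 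I expect this to be the main obstacle. The point needing care is that the removal lemma destroys \emph{all} solutions, including degenerate ones, and that its $\delta\to\gamma$ dependence is uniform in the ambient cyclic group. The argument should invoke it exactly in the form stated by Green, rather than a graph-removal variant that only controls non-degenerate triples.

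\emph{Step 2 (structure).} Apply the Deshouillers--Freiman--S\'os--Temkin theorem~\cite{DFST}. In its simplest form it says that a sum-free $A\subseteq[n]$ with $|A|>\tfrac25 n+1$ satisfies one of two alternatives: either $A$ consists entirely of odd integers, or $\min A\ge|A|$. For small $\delta$ we have $|V'|\ge 0.45n$, so one alternative applies. In the odd case, $V'\subseteq O_n$ with $|O_n|=\lceil n/2\rceil$, hence $|V'\triangle O_n|\le(\delta+\gamma)n+1$. In the other case, $V'\subseteq[\,n/2-(\delta+\gamma)n,\;n\,]$ and $|V'|\ge n/2-(\delta+\gamma)n$, so comparison with $U_n$ gives $|V'\triangle U_n|\le 3(\delta+\gamma)n+2$.

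\emph{Step 3 (restoring $X$).} Since $|V_n\triangle V'|=|X|\le\gamma n$, the triangle inequality for symmetric differences gives $\min\{|V_n\triangle O_n|,|V_n\triangle U_n|\}\le 4(\delta+\gamma)n+2$. This is at most $\eta n$ once $\delta$ is small and $n$ is large, which completes the argument. It is also consistent with the citation of~\cite[Lemma~2.2]{BLST}, which records exactly this consequence.
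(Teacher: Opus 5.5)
Your argument is correct, but it takes a different route from the paper's proof. The paper does not prove this statement from scratch. It quotes Lemma~2.2 of Balogh--Liu--Sharifzadeh--Treglown as a black box, and its proof only translates that three-alternative statement into the minimum formulation. Specifically, it reconciles ordered and unordered Schur-triple counts (a factor of two plus $O(n)$ diagonal solutions), discards the small-set alternative $|V_n|\le0.47n$ using the density hypothesis, and squeezes the parameter $\gamma$ to $o(1)$. It then converts ``all but $o(n)$ elements are odd'' or ``all but $o(n)$ elements lie in $[(1/2-o(1))n,n]$'' into $o(n)$ symmetric difference with $O_n$ or $U_n$, using the size bound.

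You instead open the black box and rederive the consequence from its two ingredients. First you apply Green's arithmetic removal lemma in $\Z_{2n+1}$, where no wrapped solutions occur. Then you apply the Deshouillers--Freiman--S\'os--Temkin dichotomy (``all odd, or $\min A\ge|A|$'') for sum-free sets of size above $2n/5+1$. As the paper itself remarks, this is exactly how the cited lemma is obtained in its source.

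Your version is self-contained and removes the convention-matching step entirely, since the removal lemma is applied to all ordered solutions, including those with $x=y$. It also gives the explicit bound $4(\delta+\gamma)n+2$. The paper's version is shorter but depends on the exact phrasing of the cited lemma.

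One point to make explicit: the removal lemma deletes elements separately from the three roles $V_n$, $V_n$, $-V_n$. So $X$ should be the union of the three deleted sets, pulled back to $V_n$; it still has size $O(\gamma n)$. Your caution about graph-removal variants is unnecessary: in the tripartite-graph proof, a degenerate solution still produces triangles across three distinct parts, so it is destroyed as well.
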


\begin{proof}
The convention in \cite{BLST} may identify the two orders of the summands
in a Schur triple.  Ordered and unordered conventions differ by at most a
factor of two (with only $O(n)$ diagonal solutions), so the hypothesis
$o(n^2)$ is unchanged.  Lemma~2.2 of \cite{BLST} has three alternatives.
Its small-set alternative
$|V_n|\le0.47n$ is excluded.  In each of the other two alternatives,
\[
        |V_n|=(1/2-\gamma)n,
        \qquad -o(1)\le\gamma\le0.03.
\]
Our lower bound $|V_n|\ge n/2-o(n)$ implies $\gamma\le o(1)$; together
with the displayed lower bound, this gives $\gamma=o(1)$.  For each $n$,
apply the alternative supplied by the lemma.  In the odd
alternative, all but $o(n)$ elements lie among the odd integers, and the
size assumption forces only $o(n)$ odd integers to be missing.  In the
interval alternative, all but $o(n)$ elements lie in
$[(1/2-\gamma)n,n]=[(1/2-o(1))n,n]$.  This interval differs from the upper
half in only $o(n)$ positions, and the size condition forces only $o(n)$
upper-half elements to be missing.  Thus at least one of the two symmetric
differences is $o(n)$ for each $n$, which is exactly the stated minimum
formulation.
\end{proof}

\begin{lemma}[Upper Schur supply]\label{lem:upper-schur-supply}
For both values of $\epsilon$,
\[
        P_N(J)\ge4\mathsf T(U).
\]
\end{lemma}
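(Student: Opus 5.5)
The plan is to exhibit, for each Schur-type triple in $U$, explicit ordered pairs that contribute to the mixed term $AB(J)$, and then to use the decomposition $P_N(J)=AA(J)+2AB(J)+BB(J)$. All four summands are sums of products of nonnegative counts, so $P_N(J)\ge2AB(J)$. It therefore suffices to prove
\[
AB(J)\ge2\mathsf T(U).
\]
Throughout, $J$ is treated as a set of ordinary integers in $[0,N]$, since $A_d$ and $B_d$ are defined by integer equations.

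\emph{Step 1 (the $A$-factor).} Fix $y\in U$ and put $d=y+1$. Since $y\le H$, we have $1\le d\le H+1\le N+1$, so $d$ lies in the summation range. For every $x\in U$ with $z=x+y+1\in U$, both $N-x$ and $N-z$ lie in $J$, because every $u\in U$ contributes $N-u$. Their difference is
\[
(N-x)-(N-z)=z-x=y+1=d.
\]
Distinct $x$ give distinct ordered pairs $(N-x,N-z)$. Hence
\[
A_{y+1}(J)\ge\#\{x\in U:x+y+1\in U\}.
\]

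\emph{Step 2 (the $B$-factor).} With the same $y$, the complementary index is
\[
T-d=N+2-(y+1)=N+1-y,
\]
so $B_{T-d}(J)$ counts ordered pairs in $J^2$ with sum $N-y$. Since $0\in J$ and $N-y\in J$, both $(0,N-y)$ and $(N-y,0)$ qualify. They are distinct because $N-y\ge N-H>0$. Thus $B_{N+1-y}(J)\ge2$.

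\emph{Step 3 (summation).} Different $y\in U$ give different indices $d=y+1$, so these contributions sit in distinct terms of $AB(J)$ and no cell is counted twice. All other terms of $AB(J)$ are nonnegative. Therefore
\[
AB(J)\ge\sum_{y\in U}A_{y+1}(J)\,B_{N+1-y}(J)\ge2\sum_{y\in U}\#\{x\in U:x+y+1\in U\}=2\mathsf T(U).
\]
Combining this with $P_N(J)\ge2AB(J)$ gives $P_N(J)\ge4\mathsf T(U)$.

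The argument uses nothing about $\epsilon$ beyond $N>H$, so it holds for both values. The only point needing care is the index bookkeeping: confirming that $d$ and $T-d$ fall in the correct ranges, that the two $B$-pairs are genuinely distinct, and that distinct triples produce distinct cells. There is no real obstacle beyond this.
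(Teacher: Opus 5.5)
Your proof is correct and is essentially the paper's argument. The paper sends each triple $(x,y,z)$ injectively to the two mixed quadruples built from $(N-x,N-z)$ together with $(0,N-y)$ or $(N-y,0)$, which gives $AB(J)\ge2\mathsf T(U)$ and hence $P_N(J)\ge4\mathsf T(U)$; grouping by the cell $d=y+1$, as you do, is the same count organized cell by cell.
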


\begin{proof}
Fix an ordered triple $(x,y,z)\in U^3$ with $x+y+1=z$.  The upper
representatives $N-x$ and $N-z$ form the positive-difference pair
\[
        (N-x)-(N-z)=z-x=y+1.
\]
At the complementary profile index, the ordered pairs
$(0,N-y)$ and $(N-y,0)$ both have the required sum $N-y$.  Hence this
triple contributes two mixed-correlation quadruples.  The construction is
injective: the positive-difference pair recovers $x,z$, and the nonzero
entry in the zero--upper sum recovers $y$.  Therefore
\[
        AB(J)\ge2\mathsf T(U).
\]
Since $P_N=AA+2AB+BB$ and all three components are nonnegative, we obtain
$P_N(J)\ge4\mathsf T(U)$.
\end{proof}

\begin{lemma}[Density forcing]\label{lem:density-forcing}
Let $k=|L|$.  If $\epsilon=1$, then
\[
 P_N(J)\ge2(2k-H-1)_+(2k-H)_+.
\]
If $\epsilon=2$, then
\[
 P_N(J)\ge2(2k-H-1)_+^2.
\]
Consequently, if $P_N(J)=o(H^2)$ along a sequence, then
\[
        |U|\ge H/2-o(H).
\]
\end{lemma}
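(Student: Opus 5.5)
We will prove the bound using only the mixed correlation $AB(J)$, and within it only the pairs that use the anchor $0$ or points of $L$. Since $P_N=AA+2AB+BB$ and every term is nonnegative, it suffices to show
\[
        AB(J)\ge(2k-H-1)_+(2k-H)_+\quad(\epsilon=1),
        \qquad
        AB(J)\ge(2k-H-1)_+^2\quad(\epsilon=2).
\]
For the $A$ factor we use only the pairs $(a,0)$ with $a\in L$, so $A_a(J)\ge1$ for every $a\in L\subseteq[1,H]$. Since $a\le H\le N+1$, each such $a$ is an admissible profile index. Hence
\[
        AB(J)\ge\sum_{a\in L}B_{T-a}(J).
\]
The factor $B_{T-a}(J)$ counts ordered pairs $(x,y)\in J^2$ with $x+y=T-a-1=N+1-a$.

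Next we bound $B_{T-a}$ from below using only the lower part $L$. Put $s=N+1-a=2H+\epsilon+1-a$. A pair $(x,s-x)$ with both entries in $[1,H]$ requires $x$ to lie in the window $W_a=[s-H,H]$, which has $|W_a|=a-\epsilon$ elements. All but at most $H-|W_a|$ elements of $L$ lie in $W_a$, and the same holds for the reflected set $s-L$. Inclusion--exclusion inside $W_a$ then gives
\[
        B_{T-a}(J)\ge\#\{x\in W_a: x\in L,\ s-x\in L\}\ge\bigl(2k-2H+a-\epsilon\bigr)_+,
\]
possibly improved by one because of the missing point $r$ and the extra anchor $0$. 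We will do this bookkeeping carefully; it is where the two parities of $\epsilon$ produce the slightly different factors $(2k-H)$ versus $(2k-H-1)$.

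To finish, we sum over $a\in L$. List $L$ as $a_1<\cdots<a_k$. Then $a_i\ge H-(k-i)$, since at most $k-i$ elements of $L$ exceed $a_i$ inside $[1,H]$, up to a shift by one from the excluded $r$. Substituting, the $i$-th term is at least $(2k-H+i-\epsilon-\text{const})_+$. Summing this arithmetic progression over the $i$ for which it is positive yields a quantity of the form $\tfrac12 t(t+1)$ with $t\approx2k-H$. After doubling, this is at least $2(2k-H-1)_+(2k-H)_+$ for $\epsilon=1$ and $2(2k-H-1)_+^2$ for $\epsilon=2$. If the crude count falls short by a constant offset, we will add the pairs $(0,s)$ and $(s,0)$ with $s\in L$, or those coming from the upper representatives $N-u$. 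These contribute at least the missing linear term.

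The main obstacle is the exact constants in this last step: the precise window length $a-\epsilon$, the effect of the removed point $r$, and the boundary $a\le N+1$. The consequence is then immediate. If $P_N(J)=o(H^2)$, the displayed bounds force $(2k-H)_+=o(H)$, so $k\le H/2+o(H)$. Since $|L|+|U|=H-1$, we get $|U|=H-1-k\ge H/2-o(H)$.
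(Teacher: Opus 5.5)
Your proposal has a genuine gap in the summation step, and the proposed repairs do not close it.

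\textbf{The reversed inequality.} The claim $a_i\ge H-(k-i)$ is backwards. Since $a_{i+1},\dots,a_k$ are $k-i$ distinct integers in $(a_i,H]$, one gets $a_i\le H-(k-i)$. The only general lower bound is $a_i\ge i$.

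Your window estimate $B_{T-a}(J)\ge(2k-2H+a-\epsilon)_+$ is correct as far as it goes. However, it is positive only for $a>2H-2k+\epsilon$, that is, only for $a$ among the top $2k-H-\epsilon$ integers of $[H]$, and nothing forces $L$ to meet that range. Concretely, take $L=[1,k]$ (with $r>k$) and $H/2<k<(2H+\epsilon+1)/3$. Every $L$--$L$ sum is at most $2k<N+1-a$ for all $a\in L$, so your main term vanishes identically. Yet the target $2(2k-H-1)(2k-H)$ is of order $H^2$ for, say, $k=\lfloor 3H/5\rfloor$. With the correct bound $a_i\ge i$, the $L$--$L$ part of your sub-sum can at best force $k\le 2H/3+o(H)$. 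So this route cannot even yield the consequence $|U|\ge H/2-o(H)$.

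\textbf{The repairs.} A pair $(0,s)$ with $s\in L$ has sum $s\le H$. But $B_{T-a}$ with $a\le H$ requires sum $N+1-a\ge H+\epsilon+1$, so such pairs never contribute. The upper-representative pairs (those with $u-x=a-1$, $x\in L\cup\{0\}$, $u\in U$) are not analysed at all. Moreover, the shortfall is the entire quadratic main term, not a linear correction. Even granting your reversed inequality, doubling $\tfrac12 t(t+1)$ gives $t(t+1)$, which is a factor $2$ below the stated bound.

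\textbf{The missing idea.} Count $L$--$L$ sums near the middle value $H$. There the reflection window is essentially all of $[H]$, so the count does not depend on where $L$ sits. The paper works in $BB$ rather than $AB$. Put $q_t=\#\{(x,y)\in L^2:x+y=t\}$. The involutions $x\mapsto H-x$ on $[H-1]$, $x\mapsto H+1-x$ on $[H]$, and $x\mapsto H+2-x$ on $[2,H]$ give
\[
q_H\ge(2k-H-1)_+,\qquad q_{H+1}\ge(2k-H)_+,\qquad q_{H+2}\ge(2k-H-1)_+.
\]
Since $B_d$ counts pairs of sum $d-1$, consider the cells $d=H+1,H+2$ when $\epsilon=1$ (so $T=2H+3$), and $d=H+1,H+3$ when $\epsilon=2$ (so $T=2H+4$). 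In each case the two cells are exchanged by $d\mapsto T-d$, and both lie in $[1,N+1]$. Each product therefore appears twice in $BB$, giving $BB\ge 2q_Hq_{H+1}$ for $\epsilon=1$ and $BB\ge 2q_Hq_{H+2}$ for $\epsilon=2$. This is exactly the stated bound.

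Your final deduction of $|U|\ge H/2-o(H)$ from the bound is fine.
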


\begin{proof}
Let $q_s=\#\{(x,y)\in L^2:x+y=s\}$.  The involutions
$x\mapsto H-x$ on $[H-1]$ and $x\mapsto H+1-x$ on $[H]$ give
\[
        q_H\ge(2k-H-1)_+,
        \qquad q_{H+1}\ge(2k-H)_+.
\]
For $\epsilon=1$, the two sum cells $H$ and $H+1$ are complementary in the
$BB$ correlation.  For $\epsilon=2$, use sums $H$ and $H+2$; the
involution $x\mapsto H+2-x$ on $[2,H]$ gives
$q_{H+2}\ge(2k-H-1)_+$.  The displayed inequalities follow.  Since
$|U|=H-1-k$, the final assertion is immediate.
\end{proof}

\begin{lemma}[Cubic mass of the stable models]\label{lem:stable-model-cubic-mass}
For each $\epsilon\in\{1,2\}$, define the parity and interval upper sets
\[
 U_{\rm par}=\{u\in[H]\setminus\{r\}:u\equiv0\pmod2\},
\]
\[
 U_{\rm int}=[\lfloor(H+1)/2\rfloor,H]\setminus\{r\}.
\]
For $\star\in\{\mathrm{par},\mathrm{int}\}$, put
\[
        L_\star=[H]\setminus\bigl(\{r\}\cup U_\star\bigr),
        \qquad
        J_\star=\{0\}\cup L_\star\cup\{N-u:u\in U_\star\}.
\]
There are absolute constants $c_0>0$ and $H_1$ such that, for all
$H\ge H_1$,
\[
        P_N(J_{\rm par})\ge c_0H^3,
        \qquad P_N(J_{\rm int})\ge c_0H^3,
\]
uniformly in $r$ and $\epsilon$.
\end{lemma}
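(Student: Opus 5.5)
Since $P_N=AA+2AB+BB$ is a sum of nonnegative terms, it suffices to exhibit a single component with mass $\gg H^3$. For each model we will find a linear-size family of indices $d$ on which both factors are linear in $H$. We only need lower bounds, so we count explicit pairs and can discard the effect of removing $r$: deleting one element of $J$ changes each $A_d$ and each $B_d$ by at most $2$. Summed against the opposite factor, which is at most $2|J|\le 4H$, the total loss is $O(H^2)$ over the $O(H)$ indices. We will therefore first compute with $r$ restored (put $r$ into $L$ or $U$ according to the model's rule) and subtract $O(H^2)$ at the end.

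\emph{Interval model.} Here $L_{\rm int}\approx[1,H/2]$ and $N-U_{\rm int}\approx[N-H,N-H/2]=[H+\epsilon,\,3H/2+\epsilon]$. The set $J$ thus contains the two intervals $X=[1,H/2]$ and $Y=[H+\epsilon,3H/2+\epsilon]$, each of length about $H/2$. Differences $y-x$ with $y\in Y$, $x\in X$ cover $[H/2,3H/2]$ with the triangular representation count. We use the $AA$ component. For $d$ in a window such as $[0.9H,1.1H]$, the complementary index $T-d$ lies in $[0.9H,1.1H]+O(1)$. Both $A_d$ and $A_{T-d}$ are then $\ge H/3$, since near the centre of the difference range the count is about $H/2$. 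Summing over about $0.2H$ values of $d$ gives $AA\ge cH^3$.

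\emph{Parity model.} Here $U$ consists of the even numbers in $[H]$, so $N-U$ consists of the elements of $[N-H,N-2]$ congruent to $N$ mod $2$. Also $L$ consists of the odd numbers in $[1,H]$. Take differences within $L$: for even $d\in[1,H/2]$ we get $A_d(J)\ge\#\{x\in L: x+d\in L\}\ge H/4-O(1)$. The complementary index $T-d=N+2-d$ has the same parity as $N$. For $d\le H/2$ it lies in $[H+2,2H+3]$, which is too large for differences within $L$. We therefore use cross differences $(N-u)-x$ with $x\in L$, $u\in U$, which range over $N-[H]-[H]$, i.e. roughly $[1,2H]$, with a parity constraint: $N-u-x\equiv N-1$ mod $2$. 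This parity does not match $T-d\equiv N$. So instead we use $B_{T-d}$, counting sums $x+y=T-d-1\equiv N-1$, with the pairs $(x,\,N-u)$ satisfying $x+N-u\equiv 1+N$ mod $2$. Again this does not match.

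This parity bookkeeping is the main obstacle. We will handle it by a four-way case check over $(A\text{ or }B)\times(A\text{ or }B)$ and the parity of $\epsilon$, choosing for each $\epsilon$ one component whose two factors live in compatible residue classes. A natural choice is $B_d$ with $d$ odd, using pairs within $L$: sums of two odd numbers are even, giving $d-1$ even, which is consistent. Its partner is then $A_{T-d}$ or $B_{T-d}$, using a within-$U$ difference $(N-u)-(N-u')$, which is even, or a cross sum.

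In every case the goal is one window of $\Theta(H)$ indices with both factors $\ge cH$. That gives $\Theta(H^3)$, and after the $O(H^2)$ loss from $r$ we obtain $c_0H^3$ for $H\ge H_1$, uniformly in $r$ and $\epsilon$.
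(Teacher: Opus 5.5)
Your handling of the deleted index $r$ and your interval-model argument are fine. The interval model differs from the paper only in which component you use. You take the $AA$ correlation of cross differences $Y-X$ in the window $d\in[0.9H,1.1H]$, where $T-d$ stays within $O(1)$ of the same window and both $A_d$ and $A_{T-d}$ are at least $0.4H-O(1)$. The paper instead uses $AB$, pairing within-interval differences $A_s$ with cross sums $B_{T-s}$ for $s\in[H/4,H/3]$.

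The parity model, however, is not proved. You end with a plan (``a four-way case check'', ``a natural choice is \dots'') rather than a verified window, and the bookkeeping that led you there contains an error. For even $d$ you reject the cross sums $x+(N-u)$ as partners of $A_d$. But $T-d-1=N+1-d\equiv N+1\pmod 2$, and $x+(N-u)\equiv N+1$ for $x$ odd and $u$ even, so the residues agree, contrary to your ``again this does not match''. The fallback you propose also does not work as stated. With $d$ odd, a cross-sum partner requires $x+(N-u)=N+1-d$, i.e.\ $u-x=d-1$ even, which is impossible for $u$ even and $x$ odd. A within-$U$ difference partner requires $T-d$ even, which for odd $d$ means $\epsilon=1$, so it says nothing for $\epsilon=2$. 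It also forces $T-d\le H$, i.e.\ $d>H$, a window you did not identify.

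The repair is short once the parity is corrected. For even $d\in[2,H/2]$ you already have $A_d\ge H/4-O(1)$ from differences inside $L$. The equation $x+(N-u)=N+1-d$ reads $u-x=d-1$. It has about $(H-d)/2$ solutions with $x\in L$ odd and $u\in U$ even, each giving two ordered pairs, so $B_{T-d}\ge H-d-O(1)\ge H/2-O(1)$. Summing over the roughly $H/4$ even values of $d$ gives $AB\ge H^3/32-O(H^2)$, uniformly in $\epsilon$ and $r$.

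This is essentially the paper's argument for $\epsilon=1$. There the positive points of $J_{\rm par}$ are the odd integers in $[1,2H-1]$, and the paper pairs $A_{2s}$ with $B_{T-2s}$. For $\epsilon=2$ the paper instead uses $BB$, correlating sums $q$ and $2H+2-q$ of two odd elements of $L$ for even $q\in[3H/4,H]$.
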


\begin{proof}
We give uniform lower bounds; the omitted index $r$ changes every displayed
representation count by at most two.

Suppose first that $\epsilon=1$.  In the parity model the positive points
are precisely the odd integers in $[1,2H-1]$, apart from at most one
deletion.  For
\[
        \lceil H/4\rceil\le s\le\lfloor H/2\rfloor,
\]
there are exactly $H-s$ ordered positive-difference pairs of difference
$2s$ before the deletion.  Moreover, writing the odd points as $2i-1$,
the equation
\[
        (2i-1)+(2j-1)=2H+2-2s
\]
has exactly $H+1-s$ ordered solutions in $[H]^2$.  Consequently
\[
        A_{2s}\ge H-s-2,
        \qquad B_{T-2s}\ge H-s-1.
\]
There are $\Omega(H)$ choices of $s$, and both factors are $\Omega(H)$.

Now let $\epsilon=2$.  The lower representatives in the parity model are
the odd integers in $[H]$, again with at most one deletion.  For every even
$q$ with
\[
        \lceil3H/4\rceil\le q\le H,
\]
the number of ordered pairs of these odd integers with sum $q$ is $q/2$
before deletion.  The number with the complementary sum $2H+2-q$ is at
least $q/2-1$.  Hence the two complementary $B$-cells are at least
$q/2-2$ and $q/2-3$, respectively.  Summing their products over the
$\Omega(H)$ admissible even values of $q$ again gives $\Omega(H^3)$.

Finally consider the interval model and put
$a=\lfloor(H+1)/2\rfloor$.  Before the single deletion, its positive points
are
\[
        [1,a-1]\cup[H+\epsilon,2H+\epsilon-a].
\]
For
\[
        \lceil H/4\rceil\le s\le\lfloor H/3\rfloor,
\]
the within-interval positive-difference pairs contribute exactly $H-2s$
before deletion, so
\[
        A_s\ge H-2s-2.
\]
A cross-sum pair can be written as $x+(N-u)$ with
$x\in[1,a-1]$, $u\in[a,H]$, and the equation for the complementary sum is
$u-x=s-1$.  It has $s-1$ choices of $(x,u)$, each in two orders.  Thus
\[
        B_{T-s}\ge2s-4.
\]
Again there are $\Omega(H)$ cells and both factors are $\Omega(H)$,
proving the two uniform cubic lower bounds.
\end{proof}

\begin{lemma}[Lipschitz bound]\label{lem:folded-lipschitz}
There is an absolute constant $C$ such that, for
$J,J'\subseteq[0,N]$ with $|J\triangle J'|\le q$, one has
\[
        |P_N(J)-P_N(J')|\le CqH^2.
\]
\end{lemma}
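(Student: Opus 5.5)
The bound will follow by writing $P_N(J)$ as a bilinear form in the representation counts, perturbing $J$ one element at a time, and controlling each change by uniform size bounds on those counts. Since $J\triangle J'$ has at most $q$ elements, we can pass from $J$ to $J'$ through a chain of at most $q$ sets, each obtained from the previous one by inserting or deleting a single point of $[0,N]$. By the triangle inequality it therefore suffices to show that a single insertion or deletion changes $P_N$ by at most $CH^2$.

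Fix $J$ and $J''=J\cup\{w\}$ with $w\notin J$; deletion is the same case with the roles swapped. Write $F_d(J)=A_d(J)+B_d(J)$, so that
\[
 P_N(J)=\sum_{d=1}^{N+1}F_d(J)F_{T-d}(J).
\]
We need two elementary bounds.

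\emph{Size bound.} For each $d$, $A_d(J)\le|J|\le N+1$, because $x$ determines $y=x-d$. Similarly $B_d(J)\le N+1$, because $x$ determines $y=d-1-x$. Hence $F_d\le 2(N+1)$.

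\emph{Increment bound.} Any pair counted in $A_d(J'')$ but not in $A_d(J)$ must contain $w$. For fixed $d$, at most two ordered pairs with difference $d$ contain $w$, namely $(w,w-d)$ and $(w+d,w)$. Likewise at most two ordered pairs with sum $d-1$ contain $w$. So $0\le F_d(J'')-F_d(J)\le4$ for every $d$. The sum of these increments over all $d$ is also at most about $4|J''|=O(H)$, though the pointwise bound already suffices.

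Now expand the difference of products termwise:
\[
 F_dF'_{T-d}-F_{d}''F''_{T-d}
 =(F_d-F''_d)F_{T-d}+F''_d(F_{T-d}-F''_{T-d}).
\]
Each summand is bounded by $4\cdot 2(N+1)$. Summing over the $N+1$ values of $d$ gives
\[
 |P_N(J'')-P_N(J)|\le 16(N+1)^2\le 16(2H+3)^2\le CH^2
\]
for an absolute constant $C$; here we use $N=2H+\epsilon\le2H+2$ and $H\ge1$. Note that the indices $T-d$ range over $[1,N+1]$, so all counts involved are defined and obey the same bounds.

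There is no real obstacle. The only point to check is that the constant does not depend on $\epsilon$, $r$, or the partition $L\sqcup U$, and this holds because every estimate above uses only $J,J'\subseteq[0,N]$ with $N\le 2H+2$. Combining the single-step bound with the chain argument gives $|P_N(J)-P_N(J')|\le CqH^2$.
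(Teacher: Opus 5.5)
Your proposal is correct and is essentially the paper's argument. The paper counts quadruples through a changed point: choose its position and two further coordinates, and the fourth is then determined. That is exactly what your bound $|F_d(J'')-F_d(J)|\le4$, multiplied by $F_{T-d}\le2(N+1)$ and summed over the $N+1$ cells, encodes; you simply make the one-point chain and the constant explicit. One small typo: in the telescoping identity, $F_dF'_{T-d}$ should read $F_dF_{T-d}$.
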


\begin{proof}
Each of $AA,AB,BA,BB$ counts ordered quadruples satisfying one linear
equation (and $BA=AB$ as above).  For a quadruple containing a prescribed
changed point, choose
its position and two other coordinates; the fourth coordinate is then
determined.  One changed point therefore affects $O(H^2)$ quadruples, and
summing over the changed points proves the claim.
\end{proof}

\begin{theorem}[Uniform folded cover mass]\label{thm:uniform-folded-cover-mass}
There are absolute constants $c>0$ and $H_0$ such that, for every
$H\ge H_0$, every $\epsilon\in\{1,2\}$, every missing index $r$, and every
partition $L\sqcup U=[H]\setminus\{r\}$,
\[
        P_N(J)\ge cH^2.
\]
\end{theorem}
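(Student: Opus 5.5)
We argue by contradiction and compactness.  Suppose the theorem fails.  Then for every $c>0$ there are arbitrarily large $H$, together with choices of $\epsilon$, $r$ and $L\sqcup U$, with $P_N(J)<cH^2$.  Taking $c=1/t$ for $t=1,2,\ldots$, we extract a sequence $H_t\to\infty$ of configurations $(\epsilon_t,r_t,L_t,U_t)$ with
\[
        P_{N_t}(J_t)=o(H_t^2).
\]
The aim is to show that such a configuration lies within $o(H)$ of one of the stable models of Lemma~\ref{lem:stable-model-cubic-mass}.  This contradicts the cubic lower bound combined with the Lipschitz bound of Lemma~\ref{lem:folded-lipschitz}.

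\emph{Density.}  By Lemma~\ref{lem:density-forcing}, $P_N(J)=o(H^2)$ forces $|U|\ge H/2-o(H)$.

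\emph{Few Schur triples.}  By Lemma~\ref{lem:upper-schur-supply}, $\mathsf T(U)\le P_N(J)/4=o(H^2)$.  We convert the shifted equation $x+y+1=z$ on $U$ into a genuine Schur equation via $V=U+1\subseteq[H+1]$.  Indeed, for $x,y,z\in U$ the condition $x+y+1=z$ is equivalent to $(x+1)+(y+1)=(z+1)+1$, which is not quite the Schur form.  To fix this, instead set $V=\{u+1:u\in U\}$ and count solutions of $a+b=c+1$; this is the same count.  Since $\mathsf T$ is a shifted Schur count, we reparametrize once more by $w=u+1$, obtaining the equation $w_1+w_2=w_3+1$.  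We will check which normalization matches Lemma~\ref{lem:schur-stability-consequence}.

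The cleanest route is to apply Green's removal lemma argument directly, or simply to note that $\{x+y+1=z\}$ for $x,y,z\in U$ is the same as $(x+1)+(y+1)=z+2$.  The expected formulation is the following: the set $V=U+1\subseteq[H+1]$ satisfies $a+b=c+1$.  The counts of the equations $a+b=c$ and $a+b=c+1$ differ only through an $O(1)$ shift.  The stability theorem is robust to this, because the triple count is Lipschitz in translations of one coordinate up to $O(n)$ boundary terms, provided we first handle this "off-by-one Schur" carefully.  This reduction is the point I expect to be the main obstacle.  The first thing I would try is to use $V=2U+1$ or to pass to the set $U+1$ inside $[H+1]$, where $(x+1)+(y+1)=(z+1)+1$.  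Alternatively I would note that $\{x+y+1=z\}$ is exactly Schur for the set $U+1$ with the equation $a+b=c+1$.  I would then verify that the DFST/BLST conclusion (odd-set or upper-interval structure) persists, since the removal-lemma input only needs a linear equation with coefficients $(1,1,-1)$ and a bounded constant term.  Granting this, Lemma~\ref{lem:schur-stability-consequence} gives that $U$ is $o(H)$-close to either the parity model or the interval model.  The parity class is whichever class makes $x+y+1$ land in the opposite class; the relevant parity set is the evens, matching $U_{\rm par}$.  The interval model is the upper half, matching $U_{\rm int}$ up to $O(1)$ boundary points.

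\emph{Conclusion.}  Since $L=[H]\setminus(\{r\}\cup U)$ is determined by $U$, the set $J$ differs from $J_{\rm par}$ or $J_{\rm int}$ (with the same $r,\epsilon$) in $q=o(H)$ points.  Lemma~\ref{lem:folded-lipschitz} then gives
\[
        P_N(J)\ge c_0H^3-CqH^2=c_0H^3-o(H^3).
\]
For large $t$ this is far larger than $o(H^2)$, the desired contradiction.  Since the contradiction holds uniformly over all $\epsilon$, $r$ and partitions, it yields absolute constants $c$ and $H_0$.
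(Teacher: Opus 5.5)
Your overall route is the paper's route. You extract a sequence with $P_N(J)=o(H^2)$, use Lemmas~\ref{lem:density-forcing} and~\ref{lem:upper-schur-supply} to get $|U|\ge H/2-o(H)$ and $\mathsf T(U)=o(H^2)$, invoke Schur stability, and contradict Lemma~\ref{lem:stable-model-cubic-mass} via Lemma~\ref{lem:folded-lipschitz}.

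The gap is in the step you yourself flag as the main obstacle, and the patch you propose to ``grant'' is false.
\begin{itemize}[label=--]
\item Your identities are arithmetic slips. $(x+1)+(y+1)=x+y+2$, which equals $z+1$ when $x+y+1=z$. It does not equal $(z+1)+1$ or $z+2$; those would encode $x+y=z$.
\item The robustness claim is wrong. The counts for $a+b=c$ and $a+b=c+1$ are not within $O(n)$ of each other: the odd integers in $[n]$ have no solution of the first and $\Theta(n^2)$ solutions of the second.
\item The stability conclusion does not persist for $a+b=c+1$. The even integers form a density-$1/2$ set with no solutions of that equation, yet they are far from both $O_n$ and $U_n$.
\end{itemize}
So, as written, your application of Lemma~\ref{lem:schur-stability-consequence} is unjustified. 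Your final identification of the parity model as the evens comes out right only because two errors cancel: the wrong equation and the wrong claim that odd/interval structure persists for it.

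No robustness statement is needed. Put $V=U+1\subseteq[H+1]$. The map $(x,y,z)\mapsto(x+1,y+1,z+1)$ is a bijection from ordered solutions of $x+y+1=z$ in $U$ to ordered solutions of $a+b=c$ in $V$. Hence $\mathsf T(U)$ is exactly the ordered Schur count of $V$. Lemma~\ref{lem:schur-stability-consequence} then applies verbatim with $n=H+1$, since $|V|=|U|\ge n/2-o(n)$.

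The two alternatives translate back as follows.
\begin{itemize}[label=--]
\item If $V$ is $o(H)$-close to $O_{H+1}$, then $U$ is $o(H)$-close to the even elements of $[H]$, hence to $U_{\rm par}$. The shift contributes only the extraneous point $0$, and removing $r$ changes one more point.
\item If $V$ is close to $U_{H+1}$, then $U$ is close to $[\lfloor(H+1)/2\rfloor,H]$, hence to $U_{\rm int}$.
\end{itemize}
With this one-line correction in place of your ``off-by-one Schur'' discussion, the rest of your argument coincides with the paper's proof.
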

\begin{proof}
If the assertion were false, there would be a sequence with
$P_N(J)=o(H^2)$.  Lemma~\ref{lem:upper-schur-supply} gives
$\mathsf T(U)=o(H^2)$, while Lemma~\ref{lem:density-forcing} gives
$|U|\ge H/2-o(H)$.  Put $n=H+1$ and $V=U+1\subseteq[n]$.  Then
$\mathsf T(U)$ is exactly the number of ordered solutions of $x+y=z$ in
$V$, so Lemma~\ref{lem:schur-stability-consequence} applies.  For each term
of the sequence the set $V$ is $o(H)$-close either to the odd set
$O_{H+1}$ or to the upper half $U_{H+1}$.  In the first case,
\[
 U_{\rm par}
 =\bigl((O_{H+1}-1)\cap[H]\bigr)\setminus\{r\};
\]
the shifted model $O_{H+1}-1$ has only the extraneous point $0$, and
restricting to the missing-index domain can change one further point.  In
the second case,
\[
 U_{H+1}-1=[\lfloor(H+1)/2\rfloor,H],
 \qquad
 U_{\rm int}=(U_{H+1}-1)\setminus\{r\}.
\]
Thus in either alternative one may choose
$U_*\in\{U_{\rm par},U_{\rm int}\}$ with
\[
        |U\triangle U_*|=o(H)+O(1)=o(H).
\]

Let $J_*$ be the corresponding folded model with the same missing index.
Changing one upper index changes at most two points of the folded set, so
$|J\triangle J_*|=O(|U\triangle U_*|)=o(H)$.  By
Lemma~\ref{lem:stable-model-cubic-mass}, $P_N(J_*)\ge c_0H^3$.  By
Lemma~\ref{lem:folded-lipschitz},
\[
        |P_N(J)-P_N(J_*)|=o(H)\,O(H^2)=o(H^3).
\]
Thus $P_N(J)=\Omega(H^3)$, contradicting $P_N(J)=o(H^2)$.

To obtain uniform constants, suppose that none existed.  For each integer
$t$ one could then choose $H_t\ge t$ and an admissible folded set with
$P_N(J_t)<H_t^2/t$.  This is exactly a sequence with
$P_N(J_t)=o(H_t^2)$, contradicting the preceding argument.  Hence suitable
absolute constants $c>0$ and $H_0$ exist.
\end{proof}

\subsection{From folded mass to a surviving template}

Retain the admissible representative set
$\mathcal A=I\setminus\{0\}$ from
Lemma~\ref{lem:folded-representative-structure}.  For $a\in\mathcal A$,
let
\[
        \Pi_a=\{-a,a+1\}\subseteq E.
\]
Define
\[
 \lambda_a(d)=\sum_{i\in C_d}
 \bigl(\1_{-i\in \Pi_a}+\1_{d-i\in \Pi_a}\bigr),
 \qquad
 \Lambda_a(E)=\sum_{d=1}^{m}\lambda_a(d)c_{m+1-d}.
\]

\begin{lemma}[Deletion-degree reduction]\label{lem:deletion-degree}
Every deletable base offset belongs to a unique pair $\Pi_a$ with
$a\in\mathcal A$, and
\[
        \Delta_{\mathrm{del}}(E)\le\max_{a\in\mathcal A}\Lambda_a(E).
\]
\end{lemma}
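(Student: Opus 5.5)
The plan is a direct termwise comparison. The uniqueness assertion is already contained in Lemma~\ref{lem:folded-representative-structure}: $E=\{0,1\}\mathbin{\dot\cup}\dot\bigcup_{a\in\mathcal A}\Pi_a$, so every $\alpha\in E\setminus\{0,1\}$ lies in exactly one $\Pi_a$. Fix such $\alpha$ and $a$. I will show $\operatorname{del}_E(\alpha)\le\Lambda_a(E)$ by bounding the first-bridge and second-bridge parts of $\operatorname{del}_E(\alpha)$ separately. Then I will observe that the two bounds together make up exactly the full pair indicator in $\lambda_a$.

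For the first-bridge part, drop the correction term $-\1_{i\in D_d}$ and use $|D_d|=c_{m+1-d}$ from Lemma~\ref{lem:profile-folding}. The first sum is then at most
\[
 \sum_{d=1}^m\sum_{i\in C_d}\bigl(\1_{-i=\alpha}+\1_{d-i=\alpha}\bigr)c_{m+1-d}.
\]
For the second-bridge part, drop $-\1_{j\in C_d}$ and reindex by the folding bijection $j\mapsto i'=m-j$ from $D_d$ onto $C_{d'}$, where $d'=m+1-d$. As in the proof of Lemma~\ref{lem:profile-folding}, one has $m+1-j=1-(-i')$ and $d-j=1-(d'-i')$. Hence $\alpha\in\{d-j,m+1-j\}$ holds exactly when $1-\alpha\in\{-i',d'-i'\}$, and the weight $|C_d|$ becomes $c_{m+1-d'}$. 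So the second sum is at most
\[
 \sum_{d'=1}^m\sum_{i'\in C_{d'}}\bigl(\1_{-i'=1-\alpha}+\1_{d'-i'=1-\alpha}\bigr)c_{m+1-d'}.
\]

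The remaining step, which is the only point needing care, is to merge the two displays. The reflection $x\mapsto1-x$ swaps $-a$ and $a+1$, so $\Pi_a=\{\alpha,1-\alpha\}$. Moreover $\alpha\ne1-\alpha$, because $2x\equiv1\pmod{2m}$ has no solution. Therefore, for any point $y$, we have $\1_{y=\alpha}+\1_{y=1-\alpha}=\1_{y\in\Pi_a}$. Adding the two displays therefore gives exactly
\[
 \sum_{d}\sum_{i\in C_d}\bigl(\1_{-i\in\Pi_a}+\1_{d-i\in\Pi_a}\bigr)c_{m+1-d}=\Lambda_a(E).
\]
Taking the maximum over $\alpha$ then proves $\Delta_{\mathrm{del}}(E)\le\max_{a\in\mathcal A}\Lambda_a(E)$. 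The main thing to check is the bookkeeping in the folding step: $d$ must range over all of $[1,m]$ on both sides, which holds because $d\mapsto m+1-d$ permutes $[1,m]$, and both endpoint cells are harmless since $C_m=D_1=\varnothing$.
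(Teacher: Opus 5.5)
Your proposal is correct and is essentially the paper's argument: drop the $i\ne j$ corrections, fold the second-bridge sum through the bijection $D_d\to C_{m+1-d}$ so that use of $\alpha$ becomes first-bridge use of $1-\alpha$, reindex by $d'=m+1-d$, and merge the $\alpha$- and $(1-\alpha)$-parts into the single $\Pi_a$-indicator in $\Lambda_a(E)$. Your version makes explicit the bookkeeping that the paper only sketches, namely the identities $m+1-j=1-(-i')$ and $d-j=1-(d'-i')$ and the check that $\alpha\ne1-\alpha$.
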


\begin{proof}
The uniqueness assertion is Lemma~\ref{lem:folded-representative-structure}.
For first-bridge covers, deleting an offset in $\Pi_a$ is counted directly
by $\lambda_a(d)$.  For second-bridge covers, the bijection
$D_d\to C_{m+1-d}$ from Lemma~\ref{lem:profile-folding} turns use of an
offset $\alpha$ into first-bridge use of $1-\alpha$, the other member of
the same reflected pair $\Pi_a$.  After reindexing $d'=m+1-d$, the
first- and second-bridge contributions for $\alpha$ are bounded by the
$\alpha$- and $(1-\alpha)$-parts, respectively, of the single quantity
$\Lambda_a(E)$, rather than by two separate copies of it.  The exclusions
$i=j$ only reduce the deletion degree.  Taking the maximum over $a\in\mathcal A$ proves the
claim.
\end{proof}

For a set $J\subseteq[0,m]$, put
\[
        E_J:=(-J)\cup(J+1)\subseteq\Z_{2m}
\]
and define its folded cover profile by
\begin{equation}\label{eq:folded-cover-profile-definition}
        c_J(d):=\#\{i\in J:d-i\in E_J\},
        \qquad 1\le d\le m.
\end{equation}

For every $a\in\mathcal A$, put
\[
        I_a:=I\setminus\{a\},
        \qquad
        P(I_a):=\sum_{d=1}^{m}c_{I_a}(d)c_{I_a}(m+1-d).
\]

\begin{lemma}[Coarse puncture removal]\label{lem:coarse-puncture-removal}
There is an absolute constant $C$ such that, for every $a\in\mathcal A$,
\[
        P(E)-2\Lambda_a(E)\ge P(I_a)-Cm.
\]
\end{lemma}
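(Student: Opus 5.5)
The plan is to compare the profile $c_d=c_I(d)$ with the punctured profile $c_{I_a}(d)$ term by term, and to show that puncturing removes exactly the candidates counted by $\lambda_a$, up to $O(1)$ double counting in each cell.

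First I would check that $c_d=c_I(d)$. By Lemma~\ref{lem:folded-representative-structure}, $E=E_I$. Moreover $i\in C_d$ means $-i\in E$, i.e.\ $i\in I$, together with $d-i\in E$. This is exactly the definition \eqref{eq:folded-cover-profile-definition}, so $P(E)=\sum_d c_I(d)c_I(m+1-d)$ by Lemma~\ref{lem:profile-folding}.

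Next I would compute the drop $c_I(d)-c_{I_a}(d)$. Passing from $I$ to $I_a$ deletes the index $a$ from the summation range and, by Lemma~\ref{lem:folded-representative-structure}, deletes exactly $\Pi_a$ from $E$. An index $i\in C_d$ is lost precisely when $-i\in\Pi_a$ (equivalently $i=a$, since $-i=a+1$ would force $i+a\equiv -1$, which is impossible in range) or when $d-i\in\Pi_a$. Hence
\[
0\le c_I(d)-c_{I_a}(d)\le\lambda_a(d),
\]
and $\lambda_a(d)-\bigl(c_I(d)-c_{I_a}(d)\bigr)$ counts only the doubly hit indices. Since $d-i\in\Pi_a$ pins down $i$ up to two choices, $\lambda_a(d)\le 3$ for each $d$.

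Now expand
\[
c_I(d)c_I(d')=\bigl(c_{I_a}(d)+\delta_d\bigr)\bigl(c_{I_a}(d')+\delta_{d'}\bigr),
\qquad d'=m+1-d,
\]
with $\delta_d\le\lambda_a(d)$. This gives
\[
P(E)-P(I_a)=\sum_d\delta_d c_{I_a}(d')+\sum_d c_{I_a}(d)\delta_{d'}+\sum_d\delta_d\delta_{d'}.
\]
Using $c_{I_a}\le c_I$, reindexing $d\mapsto m+1-d$ in the second sum, and bounding $\delta\le\lambda_a$ turns each of the first two sums into at most $\Lambda_a(E)$. The last sum is at most $9m$. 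So
\[
P(E)\le P(I_a)+2\Lambda_a(E)+9m,
\]
which is the wrong direction.

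The main obstacle is exactly this sign: the lemma asks for $P(E)-2\Lambda_a\ge P(I_a)-Cm$, i.e.\ the drop must be at least $2\Lambda_a-Cm$. For that I need a lower bound $\delta_d\ge\lambda_a(d)-O(1)$, which holds with error at most $2$ per cell (double hits only). I also need to replace $c_{I_a}(d')$ by $c_I(d')$, and there the error is $\delta_{d'}c$-terms. So the key is to show $\sum_d\lambda_a(d)\bigl(c_I(d')-c_{I_a}(d')\bigr)=O(m)$. This holds because $\sum_d\lambda_a(d)\le 3m$ pointwise and the difference is at most $\lambda_a(d')\le 3$, giving $O(m)$.

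Similarly, $\sum_d\bigl(\lambda_a(d)-\delta_d\bigr)c_I(d')$ must be $O(m)$. Double hits occur only at the $O(1)$ values of $d$ where both $-i$ and $d-i$ lie in $\Pi_a$, namely $d\in\{0,2a+1\}$-type values, and each such term contributes at most $c_I\le m+1$. With these two estimates, the bilinear expansion yields the stated inequality with an absolute $C$ (around $30$).
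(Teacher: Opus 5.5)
Your corrected argument is essentially the paper's proof. Your $\delta_d$ is the paper's $\mu_d$, and the double-hit correction $\varepsilon_d=\lambda_a(d)-\delta_d\in\{0,1\}$ is supported only at $d=2a+1$. The bilinear expansion then gives $P(E)-2\Lambda_a(E)=P(I_a)-\sum_d\delta_d\delta_{m+1-d}-2\sum_d\varepsilon_d c_{m+1-d}$, and the two errors are bounded by $9m$ and $O(m)$ as you indicate. You can drop the initial ``wrong direction'' computation, since in this identity both errors already enter with the needed sign.
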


\begin{proof}
By Lemma~\ref{lem:folded-representative-structure}, $E=E_I$, and hence
$c_d=c_I(d)$.  Put
\[
        c'_d:=c_{I_a}(d),
        \qquad \mu_d:=c_d-c'_d.
\]
The same lemma shows that deleting $a$ removes
exactly the left index $a$ and exactly the two right offsets
$\Pi_a=\{-a,a+1\}$; neither right offset has another folded
representative.  A lost cover can therefore use the deleted left index
$a$, the deleted right offset $-a$, or the deleted right offset $a+1$.
Hence $0\le\mu_d\le3$ and
\[
        \sum_d\mu_d\le3m.
\]
For $i\in[0,m]$, the condition $-i\in\Pi_a$ is equivalent to
$i=a$: the alternative $-i=a+1$ would give $i=2m-a-1>m$.  Thus a cover is
lost exactly when at least one of the two indicators in
$\lambda_a(d)$ is nonzero.  The quantity $\mu_d$ counts their logical OR
over all covers, whereas $\lambda_a(d)$ counts their sum.  Their
difference is therefore the number of covers for which both indicators are
one.  Such a cover must have $i=a$ and $d-a=a+1$, so it can occur only in
the single profile cell $d=2a+1\le m$.  Hence
\[
        \lambda_a(d)=\mu_d+\varepsilon_d,
        \qquad \varepsilon_d\in\{0,1\},
        \qquad |\operatorname{supp}(\varepsilon)|\le1.
\]
By symmetry under $d\mapsto m+1-d$,
\[
\begin{aligned}
 P(I_a)
 &=P(E)-2\sum_d\mu_d\,c_{m+1-d}
   +\sum_d\mu_d\mu_{m+1-d},\\
 \Lambda_a(E)
 &=\sum_d(\mu_d+\varepsilon_d)c_{m+1-d}.
\end{aligned}
\]
Therefore
\[
 P(E)-2\Lambda_a(E)
 =P(I_a)-\sum_d\mu_d\mu_{m+1-d}
       -2\sum_d\varepsilon_d\,c_{m+1-d}.
\]
The first error is at most $3\sum_d\mu_d\le9m$.  Also
$C_d\subseteq I$, so
\[
        c_d\le |I|\le \frac m2
\]
for both reflected-wave types.  Since
$|\operatorname{supp}(\varepsilon)|\le1$, the second error is at most $m$.
This proves the lemma with, for example, $C=10$.
\end{proof}

\begin{lemma}[Parity-uniform folding after one deletion]
\label{lem:parity-uniform-folding}
For every $a\in\mathcal A$ and every $1\le d\le m$,
\[
        c_{I_a}(d)=A_d(I_a)+B_d(I_a).
\]
Moreover $I_a$ has the form used in
Theorem~\ref{thm:uniform-folded-cover-mass}, with
\[
(H,N,\epsilon)=
\begin{cases}
\bigl((m-2)/2,m-1,1\bigr),&m\text{ even},\\
\bigl((m-3)/2,m-1,2\bigr),&m\text{ odd},
\end{cases}
\]
and
\[
        P(I_a)=P_N(I_a).
\]
\end{lemma}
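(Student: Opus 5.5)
The plan is to unwind the definition of $c_J(d)$ for $J=I_a$ and read off the two correlation types. The pairing structure of $I$ then comes from the antipodal transversality of $E$, as recorded in Lemma~\ref{lem:folded-representative-structure}.

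\emph{Step 1: the cover identity.} Fix $J=I_a\subseteq I\subseteq[0,m-2]$. By Lemma~\ref{lem:folded-representative-structure}, $m-1,m\notin I$, and $-J$ and $J+1$ are disjoint. So each $i\in J$ with $d-i\in E_J$ falls into exactly one of two cases.
\begin{itemize}
\item If $d-i\equiv-x$ for some $x\in J$, then $i-x\equiv d\pmod{2m}$. Since $i,x\in[0,m-2]$ and $1\le d\le m$, the only integer solution is $i-x=d$. (The alternative $x=2m+i-d\ge m+1$ is out of range.)
\item If $d-i\equiv y+1$ for some $y\in J$, then $i+y\equiv d-1$. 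Here $0\le i+y\le 2m-4$ and $0\le d-1\le m-1$, so the only solution is $i+y=d-1$. (A wrapped solution would need $y+1=d-i+2m\ge m+1$, contradicting $y\le m-2$.)
\end{itemize}
In each case the partner $x$ or $y$ is unique. Summing over $i$ therefore gives $c_{I_a}(d)=A_d(I_a)+B_d(I_a)$.

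\emph{Step 2: the folded shape of $I_a$.} Put $N=m-1$. For $1\le t\le m-2$, the antipode of $-t$ is $m-t=(N-t)+1$. Combining $E=(-I)\dot\cup(I+1)$ with antipodal-freeness and with $E\cup(E+m)$ covering everything outside the omitted pair, we get that exactly one of $t$ and $N-t$ lies in $I$ whenever the antipodal pair $\{-t,m-t\}$ is not omitted.
\begin{itemize}
\item For $m$ even there is no omitted pair. The involution $t\mapsto N-t$ on $[1,m-2]$ has no fixed point, and its orbit representatives are $[H]$ with $H=(m-2)/2$. Then $N=2H+1$, so $\epsilon=1$.
\item For $m$ odd, the omitted pair is the reflection-invariant pair, which corresponds to the fixed point $t=(m-1)/2=N/2$. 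This $t$ lies outside $I$. The remaining representatives form $[H]$ with $H=(m-3)/2$. Then $N=2H+2$, so $\epsilon=2$.
\end{itemize}
In both cases, let $L$ be the set of $t\in[H]$ with $t\in I$, and $U$ the set of $t$ with $N-t\in I$. Deleting $a$ removes exactly one representative: take $r=\min\{a,N-a\}$. This gives $L\sqcup U=[H]\setminus\{r\}$ and $I_a=\{0\}\cup L\cup\{N-u:u\in U\}$, which is the form required in Theorem~\ref{thm:uniform-folded-cover-mass}.

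\emph{Step 3: the correlation.} Since $T=N+2=m+1$ and $N+1=m$, substituting Step 1 into the definition of $P(I_a)$ gives $P(I_a)=\sum_{d=1}^{m}(A_d+B_d)(A_{m+1-d}+B_{m+1-d})=P_N(I_a)$.

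The main care is needed in Step 1, to rule out wraparound solutions modulo $2m$. This is where the bounds $I\subseteq[0,m-2]$ and $d\le m$ are used. It also needs care in Step 2's odd case, to confirm that the central representative $N/2$ is genuinely absent from $I$ and so matches the parameter $\epsilon=2$.
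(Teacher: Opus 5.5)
Your proposal is correct and is essentially the paper's argument: the paper likewise rules out modular wraparound using $m-1,m\notin I$ to reduce covers to the integer equations $i-j=d$ and $i+j=d-1$ (mutually exclusive since $-I_a$ and $I_a+1$ are disjoint), uses antipodal transversality together with reflection to show that exactly one of $u,N-u$ lies in $I$ for $u\in[H]$ (with $h=(m-1)/2\notin I$ when $m$ is odd, because $-h$ lies in the omitted central pair), and matches $T=m+1$, $N+1=m$; you merely do the two main steps in the opposite order. One harmless slip: in Step~1 the wrapped alternative gives $x=2m+i-d\ge m$, not $\ge m+1$ (take $i=0$, $d=m$), and this still contradicts $x\le m-2$.
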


\begin{proof}
Put $N=m-1$ and recall
\[
        I=\{i\in[0,m]:-i\in E\}.
\]
We first identify the reflected representative pairs in $I$.  Let $H$ have
the value stated in the lemma and fix $u\in[H]$.  Then
\[
 u\in I\quad\Longleftrightarrow\quad -u\in E,
\]
whereas
\[
 N-u\in I
 \quad\Longleftrightarrow\quad
 u+1-m\in E.
\]
The offset $u+1-m$ is the antipode of $u+1$, and reflection gives
\[
        u+1\in E\quad\Longleftrightarrow\quad -u\in E.
\]
For a full wave, antipodal transversality therefore says that exactly one
of $u$ and $N-u$ belongs to $I$.  The identical argument applies to the
odd zero-free wave for every $u\in[H]$, because the omitted central
antipodal pair is not one of these pairs.  Also $0\in I$, while
$m,N\notin I$: the offsets $m$ and $m+1$ are antipodes of the anchors
$0$ and $1$.  When $m=2h+1$ is odd, the only remaining index is
$h=(m-1)/2$, and $h\notin I$ because $-h$ is a member of the omitted
central antipodal pair $\{-h,h+1\}$.

It follows that
\[
        I=\{0\}\cup\{\text{one element of }\{u,N-u\}:u\in[H]\}.
\]
Deleting a nonanchor representative $a$ removes the representative of one
unique pair, say the pair indexed by $r$.  Thus there is a unique partition
\[
        L\sqcup U=[H]\setminus\{r\}
\]
for which
\[
        I_a=\{0\}\cup L\cup\{N-u:u\in U\}.
\]
This is precisely the folded form used in
Theorem~\ref{thm:uniform-folded-cover-mass}; moreover
$N=2H+1$ when $m$ is even and $N=2H+2$ when $m$ is odd.

It remains to identify the profile.  By
\eqref{eq:folded-cover-profile-definition}, a cover indexed by
$i\in I_a$ satisfies either $d-i\equiv-j$ or
$d-i\equiv j+1\pmod{2m}$ for some $j\in I_a$.  These congruences are
ordinary integer equations in the present ranges.  The only possible
wrapping endpoint in the first congruence would require $j=m$, but
$m\notin I_a$; the second congruence cannot wrap.  Hence the two equations
are exactly
\[
        i-j=d,
        \qquad i+j=d-1.
\]
By Lemma~\ref{lem:folded-representative-structure},
\[
        E_{I_a}=(-I_a)\mathbin{\dot\cup}(I_a+1),
\]
so these two alternatives are mutually exclusive.  Therefore
\[
        c_{I_a}(d)=A_d(I_a)+B_d(I_a).
\]
Finally the complementary profile constant is
$m+1=N+2$, and the summation ranges agree because $N+1=m$.  Therefore
$P(I_a)=P_N(I_a)$.
\end{proof}

\begin{proposition}[Reflected-wave cover surplus]\label{prop:reflected-wave-cover-surplus}
There are constants $c>0$ and $m_0$ such that every reflected base wave
$E$ arising in Corollary~\ref{cor:one-extra-deletion-normal-form} satisfies
\[
        Q(E)-\Delta_{\mathrm{del}}(E)\ge cm^2
\]
for $m\ge m_0$.
\end{proposition}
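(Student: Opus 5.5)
Chaining the preceding lemmas gives the bound directly. By Lemma~\ref{lem:profile-folding}, $Q(E)\ge\frac12P(E)$. By Lemma~\ref{lem:deletion-degree}, $\Delta_{\mathrm{del}}(E)\le\max_{a\in\mathcal A}\Lambda_a(E)$. Fix $a\in\mathcal A$ attaining this maximum. Then
\[
        Q(E)-\Delta_{\mathrm{del}}(E)\ge\tfrac12\bigl(P(E)-2\Lambda_a(E)\bigr).
\]
Lemma~\ref{lem:coarse-puncture-removal} bounds the right side below by $\frac12\bigl(P(I_a)-Cm\bigr)$ with an absolute constant $C$. So the problem reduces to a quadratic lower bound for the punctured folded mass $P(I_a)$, uniformly in the deleted representative $a$.

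For that bound we invoke Lemma~\ref{lem:parity-uniform-folding}. It identifies $P(I_a)=P_N(I_a)$, with $I_a$ of exactly the folded form
\[
        \{0\}\cup L\cup\{N-u:u\in U\},\qquad L\sqcup U=[H]\setminus\{r\}.
\]
Here $H=(m-2)/2$ or $(m-3)/2$ according to parity, and $\epsilon\in\{1,2\}$. Theorem~\ref{thm:uniform-folded-cover-mass} then gives $P_N(I_a)\ge c_1H^2$ once $H\ge H_0$, uniformly in $\epsilon$, $r$ and the partition. Since $H\ge(m-3)/2$, this gives $P(I_a)\ge c_1m^2/8$ for large $m$. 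Combining,
\[
        Q(E)-\Delta_{\mathrm{del}}(E)\ge\tfrac12\Bigl(\tfrac{c_1}{8}m^2-Cm\Bigr)\ge\tfrac{c_1}{32}m^2
\]
for $m\ge m_0$ large. This proves the proposition with $c=c_1/32$.

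Two bookkeeping points need checking. First, $\mathcal A$ must be nonempty, so that $a$ exists. This holds because $|I|=H+1\ge2$ for $m\ge5$. If instead $E$ had no deletable offsets, then $\Delta_{\mathrm{del}}(E)$ would be $\max\varnothing=0$ by convention, and a direct lower bound on $P(E)$ would be needed; this case does not arise. Second, both reflected-wave types of Corollary~\ref{cor:one-extra-deletion-normal-form} must be covered. They are, since Lemmas~\ref{lem:folded-representative-structure}, \ref{lem:profile-folding} and \ref{lem:parity-uniform-folding} are each stated for both types.

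The substantive obstacle is the uniform quadratic bound on $P_N$, but that is already Theorem~\ref{thm:uniform-folded-cover-mass}. What remains is making sure the constant $C$ in the puncture removal is absolute, so that the linear loss $Cm$ is dominated by $c_1H^2$ for large $m$.
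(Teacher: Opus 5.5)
Your proposal is correct and follows the paper's own proof. It uses the same chain: $Q(E)\ge\frac12P(E)$, then $\Delta_{\mathrm{del}}(E)\le\max_{a\in\mathcal A}\Lambda_a(E)$, then the puncture bound $P(E)-2\Lambda_a(E)\ge P(I_a)-Cm$, and finally $P(I_a)=P_N(I_a)\ge cH^2$ via Lemma~\ref{lem:parity-uniform-folding} and Theorem~\ref{thm:uniform-folded-cover-mass}, with the linear loss absorbed for large $m$. The differences are cosmetic: you use $H\ge(m-3)/2$ where the paper uses $H\ge m/4$, and you justify $\mathcal A\neq\varnothing$ explicitly via $|I|=H+1\ge2$, where the paper only says ``for sufficiently large $m$''.
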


\begin{proof}
Let $c>0$ be the constant from
Theorem~\ref{thm:uniform-folded-cover-mass}.  In both parity cases of
Lemma~\ref{lem:parity-uniform-folding}, one has $H\ge m/4$ for all
sufficiently large $m$.  Hence there is a new absolute constant
$c_2>0$ such that
\[
        P(I_a)=P_N(I_a)\ge c_2m^2
\]
uniformly in $a$; for example, one may take $c_2=c/16$.  By
Lemma~\ref{lem:coarse-puncture-removal}, after increasing the threshold
there is $c_3>0$ such that
\[
        P(E)-2\Lambda_a(E)\ge c_3m^2
\]
for every $a\in\mathcal A$.  For sufficiently large $m$ the set
$\mathcal A$ is nonempty.  Hence, using
Lemmas~\ref{lem:profile-folding} and~\ref{lem:deletion-degree},
\[
\begin{aligned}
 Q(E)-\Delta_{\mathrm{del}}(E)
 &\ge \frac12P(E)-\max_{a\in\mathcal A}\Lambda_a(E)\\
 &=\frac12\min_{a\in\mathcal A}
       \bigl(P(E)-2\Lambda_a(E)\bigr)
 \ge \frac{c_3}{2}m^2.
\end{aligned}
\]
\end{proof}

\begin{theorem}[The central middle length]\label{thm:central-middle-length}
For all sufficiently large $m$, every even one-extra-edge setup contains a
Berge cycle of length $m+1$.
\end{theorem}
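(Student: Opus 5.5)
The plan is to argue by contradiction and chain the reductions already assembled.

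Suppose an even one-extra-edge setup with $m$ large has no Berge cycle of length $m+1$. First, $f$ contains no antipodal pair $\{a,a+m\}$; otherwise the chord lemma (Lemma~\ref{lem:chord}) with $s=m$ gives length $m+1$. Second, no Hamiltonian label $e_p$ contains an antipodal pair. If one did, the half-circle interval containing $p$ serves as one free interval. A one-index gap disjoint from it serves as the other, using its own label. Proposition~\ref{prop:free-label} then yields length $m+1$, as explained at the start of Section~\ref{sec:central-length}. Hence the labels $e_0,\ldots,e_{2m-1}$ form a central antipodal-free Hamiltonian label system as in Setup~\ref{setup:central}. They are distinct $(m-1)$-sets by simplicity and $(m-1)$-uniformity, and each contains its anchors $\{p,p+1\}$. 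In particular, every free-label two-gap cycle of length $m+1$ is excluded, since such a cycle is itself a Berge cycle of that length.

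Next I would apply Theorem~\ref{thm:exact-reflected-extraction} to obtain the column signs. Corollary~\ref{cor:one-extra-deletion-normal-form} then gives a reflection-symmetric base wave $E$, either a full complementary-transversal wave or a zero-free reflected wave. It also gives deletion sets $R_p\subseteq E\setminus\{0,1\}$ with $E\setminus R_p\subseteq e_p-p$, possibly with inserted extra offsets, and $\sum_p|R_p|\le2m$. The deleted offsets are nonanchor base offsets, so they lie in $E\setminus\{0,1\}$ as Lemma~\ref{lem:global-candidate-survival} requires.

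By Proposition~\ref{prop:reflected-wave-cover-surplus}, for $m\ge m_0$ we have $Q(E)-\Delta_{\mathrm{del}}(E)\ge cm^2>0$. Lemma~\ref{lem:global-candidate-survival} therefore produces a surviving rotated candidate, which is a free-label two-gap Berge cycle of length $m+1$. This contradicts the assumption, and the theorem follows.

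The main point to check is the interface between the corollary's output and the hypotheses of Lemma~\ref{lem:global-candidate-survival}. One must confirm that a surviving candidate $(i,j)$ at rotation $x$ with $i\ne j$ really gives distinct labels $e_{x+i},e_{x+j}$ covering the bridges $\{x,x+d\}$ and $\{x+d,x+m+1\}$, with $i,j$ inside the free intervals $[x,x+d-1]^+$ and $[x+d,x+m]^+$. One must also confirm that inserted central offsets in the odd case only add covers. Finally, the degenerate cases $d=1$ and $d=m$ are handled by Proposition~\ref{prop:free-label}'s adjacent-interval cases. All of these have already been verified in the preceding lemmas, so I expect no genuine obstacle beyond bookkeeping.
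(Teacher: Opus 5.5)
Your proposal is correct and follows the paper's own proof step for step. A counterexample reduces to a central antipodal-free label system. Theorem~\ref{thm:exact-reflected-extraction} and Corollary~\ref{cor:one-extra-deletion-normal-form} then give a reflected base wave with at most one deleted nonanchor offset per row, and Proposition~\ref{prop:reflected-wave-cover-surplus} combined with Lemma~\ref{lem:global-candidate-survival} produces a surviving free-label two-gap cycle of length $m+1$, which is a contradiction.

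Your closing bookkeeping checks are sound. In fact, the cells $d=1$ and $d=m$ contribute no candidates at all, since $D_1=C_m=\varnothing$ by Lemma~\ref{lem:profile-folding}.
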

\begin{proof}
As observed at the beginning of Section~\ref{sec:central-length}, a
counterexample is central and antipodal-free.  Corollary~\ref{cor:one-extra-deletion-normal-form}
then supplies a reflected base wave with at most one deleted base offset in
each row and at most $2m$ deletions in total.  By
Proposition~\ref{prop:reflected-wave-cover-surplus}, $Q(E)>\Delta_{\mathrm{del}}(E)$ for sufficiently
large $m$.  Lemma~\ref{lem:global-candidate-survival} gives a surviving two-gap
cycle of length $m+1$.
\end{proof}

\section{The remaining middle lengths}\label{sec:six-middle}

Throughout this section assume Setup~\ref{setup:even-outside}; in
particular, $m\ge5$, the Hamiltonian labels are $e_p$, and
$f\notin E(C)$ is an additional $(m-1)$-edge.  It remains to obtain
\[
        m-2,\quad m-1,\quad m,\quad m+2,\quad m+3,\quad m+4.
\]
These lengths occur in three chord-dual pairs.  Fix
$j\in\{1,2,3\}$ and put $s=m-j$.  If $s\in\Delta(f)$, the chord lemma gives
both $s+1=m-j+1$ and $2m-s+1=m+j+1$.  Hence, for this fixed value of $j$,
we may assume $s\notin\Delta(f)$.  Thus $A:=f\subseteq\Z_{2m}$ has size
$m-1$ and satisfies
\[
        A\cap(A+s)=\varnothing.
\]
Write
\[
 \mathfrak D=\mathfrak D_s(A)
 :=\Z_{2m}\setminus\bigl(A\cup(A+s)\bigr),
 \qquad
 \alpha_x=\mathbf 1_{x\in A},
 \qquad
 \delta_x=\mathbf 1_{x\in\mathfrak D}.
\]

\begin{lemma}[Exact defect recurrence]
\label{lem:defect-recurrence}
One has $|\mathfrak D|=2$ and, for every $x\in\Z_{2m}$,
\begin{align}
 \alpha_x+\alpha_{x-s}&=1-\delta_x, \label{eq:defect-first-recurrence}\\
 \alpha_{x+2j}-\alpha_x&=\delta_x-\delta_{x-s}\label{eq:defect-second-recurrence}.
\end{align}
In particular, translation by $2j$ changes the membership word of $A$ on
at most four directed edges in total.
\end{lemma}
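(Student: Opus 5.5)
The plan is to count first, then extract the two recurrences from the disjointness $A\cap(A+s)=\varnothing$ together with the identity $2s+2j=2m\equiv0$.

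\emph{Size of $\mathfrak D$.} Since $|A|=|A+s|=m-1$ and the two sets are disjoint, their union has exactly $2m-2$ elements. Hence $|\mathfrak D|=2$.

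\emph{First recurrence.} For each $x$, the point $x$ lies in at most one of $A$ and $A+s$, and $x\in A+s$ exactly when $\alpha_{x-s}=1$. So $\alpha_x+\alpha_{x-s}\in\{0,1\}$. The sum equals $0$ precisely when $x\notin A\cup(A+s)$, that is, when $\delta_x=1$. This gives \eqref{eq:defect-first-recurrence}.

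\emph{Second recurrence.} Apply \eqref{eq:defect-first-recurrence} at the point $x-s$, which gives $\alpha_{x-s}+\alpha_{x-2s}=1-\delta_{x-s}$. Subtract this from the relation at $x$ to get
\[
\alpha_x-\alpha_{x-2s}=\delta_{x-s}-\delta_x .
\]
Since $2s=2m-2j$, we have $x-2s\equiv x+2j \pmod{2m}$. Rearranging therefore gives $\alpha_{x+2j}-\alpha_x=\delta_x-\delta_{x-s}$, which is \eqref{eq:defect-second-recurrence}.

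\emph{Consequence for translation by $2j$.} The right-hand side of \eqref{eq:defect-second-recurrence} is nonzero only when $x\in\mathfrak D$ or $x-s\in\mathfrak D$. That is, it is nonzero only for $x\in\mathfrak D\cup(\mathfrak D+s)$, a set of at most $4$ elements. So along the directed edges $x\to x+2j$ of the step-$2j$ graph, the membership word of $A$ changes on at most four edges.

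There is no real obstacle here. The only point needing care is the sign convention: one must use $x-2s\equiv x+2j$ and not $x+2s$.
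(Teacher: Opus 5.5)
Your proposal is correct and follows the same route as the paper. You count $|A\cup(A+s)|=2m-2$, read membership in $A+s$ as $\alpha_{x-s}$, and subtract the relation at $x-s$ using $-2s\equiv 2j\pmod{2m}$; your justification of the four-edge consequence via $\mathfrak D\cup(\mathfrak D+s)$ spells out a step the paper leaves implicit.
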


\begin{proof}
The disjoint sets $A$ and $A+s$ both have size $m-1$, so their union has
size $2m-2$ and $|\mathfrak D|=2$.  At a vertex $x$, membership in $A+s$ is
$\alpha_{x-s}$, which proves \eqref{eq:defect-first-recurrence}.  Subtract
that equation from its translate by $-s$ and use
$-2s\equiv2j\pmod{2m}$ to obtain \eqref{eq:defect-second-recurrence}.
\end{proof}

\subsection{All-split exclusions at a defect row}
\label{subsec:all-split}

For a row $p$, retain
\[
 G_p=A\cap[p+s+1,p]^{+},
 \qquad
 H_p=A\cap[p+1,p-s]^{+}.
\]
Define
\phantomsection
\begin{equation*}\tag{L}\label{eq:all-split-long}
\begin{split}
 \mathcal F_p^{\rm long}:={}&
 \{p-(s-1),p+s\}\\
 &\cup\!\bigcup_{\substack{1\le D\le s-1\\p+D\in A}}
          (G_p+D-s-1)\\
 &\cup\!\bigcup_{\substack{1\le R\le s-1\\p-R\in A}}
          (H_p+s-R).
\end{split}
\end{equation*}
and
\phantomsection
\begin{equation*}\tag{S}\label{eq:all-split-short}
\begin{split}
 \mathcal F_p^{\rm short}:={}&
 \{p+s+1,p-s\}\\
 &\cup\!\bigcup_{\substack{1\le D\le s-1\\p+D\in A}}
          (G_p+D-s)\\
 &\cup\!\bigcup_{\substack{1\le R\le s-1\\p-R\in A}}
          (H_p+s-R-1).
\end{split}
\end{equation*}
The long target is $2m-s+1=m+j+1$ and the short target is
$s+1=m-j+1$.

\begin{lemma}[All-split exclusion]
\label{lem:all-split-exclusion}
If the long target is absent, then
\[
        e_p\cap\mathcal F_p^{\rm long}=\varnothing.
\]
If the short target is absent, then
\[
        e_p\cap\mathcal F_p^{\rm short}=\varnothing.
\]
Consequently either target occurs whenever the corresponding forbidden set
has size at least $m+2$ for some $p$.
\end{lemma}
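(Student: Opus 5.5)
The plan is to observe that $\mathcal F_p^{\rm long}$ and $\mathcal F_p^{\rm short}$ are unions of two kinds of pieces, and to treat each piece separately. The first kind is a pair of boundary points. The second kind is a family of split-lock translates, one for each legal split $(D,R)$.

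\textbf{Split-lock translates.} Every index $D$ with $1\le D\le s-1$ gives a legal split $(D,R)=(D,s-D)$ with $D,R\ge1$. Under this split, the hypothesis $p+D\in A$ is exactly the hypothesis of part (i) of Lemma~\ref{lem:long-locks}. That lemma shows that $e_p$ misses $G_p+D-s-1$. In the same way, an index $R$ with $p-R\in A$ gives the split $(s-R,R)$, and part (ii) of that lemma shows that $e_p$ misses $H_p+s-R$. The short family is handled identically by Lemma~\ref{lem:short-locks}. It yields the translates $G_p+D-s$ and $H_p+s-R-1$. These locks were proved for an arbitrary row $p$ and an arbitrary legal split, assuming only $A=f$ and $A\cap(A+s)=\varnothing$. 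Both assumptions are in force here, so the unions over $D$ and $R$ require no further work.

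\textbf{Boundary points.} The remaining points are $\{p-(s-1),p+s\}$ in the long case and $\{p+s+1,p-s\}$ in the short case. I will obtain them from Lemma~\ref{lem:boundary-bypass} applied to the Hamiltonian cycle, with $n=2m$. If $p-d\in e_p$ or $p+d+1\in e_p$, that lemma gives a cycle of length $2m-d$. For the long target $2m-s+1$, take $d=s-1$. The forbidden points are then $p-(s-1)$ and $p+s$, which matches~\eqref{eq:all-split-long}. For the short target $s+1$, take $d=2m-s-1$. The forbidden points are $p-d\equiv p+s+1$ and $p+d+1\equiv p-s$, which matches~\eqref{eq:all-split-short}. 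The condition $1\le d\le 2m-2$ holds because $2\le s\le m-1$.

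\textbf{Consequence.} Suppose the relevant target is absent. Then $e_p$ lies in the complement of the forbidden set. If the forbidden set has size at least $m+2$, this complement has at most $m-2$ points, which contradicts $|e_p|=m-1$. Hence the target must occur.

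\textbf{Main obstacle.} The only delicate point is confirming that the two split-lock lemmas apply for every $D,R\in[1,s-1]$ with no hidden constraint on $s$. Their hypotheses are $D,R\ge1$ with $D+R=s$, and $s\le m-1$. Both hold in this range, so I expect the proof to be a direct assembly of the earlier lemmas.
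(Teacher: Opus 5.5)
Your proposal is correct and follows the paper's proof: the singleton terms come from the boundary-bypass lemma, the unions are exactly the row-first and $f$-first long and short split locks over all legal splits $D+R=s$, and the count $2m-(m+2)=m-2<m-1=|e_p|$ gives the consequence. Your explicit parameter choices $d=s-1$ for the long target and $d=2m-s-1$ for the short target fill in a detail the paper leaves implicit.
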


\begin{proof}
The singleton terms are the two boundary-bypass exclusions.  The remaining
terms are exactly all row-first and $f$-first split locks with
$D+R=s$.  Thus absence of the target forces $e_p$ to avoid their union.  A
forbidden set of size at least $m+2$ leaves at most $m-2$ available vertices,
contrary to $|e_p|=m-1$.
\end{proof}

The next identity is the algebraic core of the proof.  It converts all split
locks into comparisons on two adjacent additive diagonals.

\begin{lemma}[Defect-row word form]
\label{lem:defect-row-word-form}
Let $p\in\mathfrak D$ and translate so that $p=0$.  For
$1\le D,t\le s-1$, the short forbidden set contains
\begin{align}
 &D+t
 &&\text{if }\alpha_D=1,\ \alpha_t=0,\ \delta_{s+t}=0,
 \label{eq:defect-short-first}\\*
 &D+t-1
 &&\text{if }\alpha_D=0,\ \alpha_t=1,\ \delta_D=0,
 \label{eq:defect-short-second}
\end{align}
and the long forbidden set contains
\begin{align}
 &D+t-1
 &&\text{if }\alpha_D=1,\ \alpha_t=0,\ \delta_{s+t}=0,
 \label{eq:defect-long-first}\\*
 &D+t
 &&\text{if }\alpha_D=0,\ \alpha_t=1,\ \delta_D=0.
 \label{eq:defect-long-second}
\end{align}
All four displayed values are ordinary integers in $[1,2m-2]$, so no modular
wrap is involved.
\end{lemma}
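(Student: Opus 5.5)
The plan is to unwind the definitions \eqref{eq:all-split-short} and \eqref{eq:all-split-long} at the row $p=0$ and use the first defect recurrence \eqref{eq:defect-first-recurrence} to supply the membership hypotheses of each union term. After translation we have $0\in\mathfrak D$, and all indices are read in $\Z_{2m}$ with representatives in $[0,2m-1]$. Recall
\[
G_0=A\cap[s+1,2m]
\qquad\text{and}\qquad
H_0=A\cap[1,2m-s],
\]
read via the integer-lift convention already used in Lemmas~\ref{lem:long-locks} and~\ref{lem:short-locks}. Since $s\le m-1$, every $t\in[1,s-1]$ lies in $[1,2m-s]$, and $s+t\in[s+1,2s-1]\subseteq[s+1,2m]$.

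\emph{The ``row-first'' cases \eqref{eq:defect-short-first} and \eqref{eq:defect-long-first}.} Take the $D$-term with $p+D=D\in A$, that is, $\alpha_D=1$. Apply \eqref{eq:defect-first-recurrence} at $x=s+t$:
\[
\alpha_{s+t}+\alpha_t=1-\delta_{s+t}.
\]
With $\alpha_t=0$ and $\delta_{s+t}=0$ this gives $\alpha_{s+t}=1$. Hence $y=s+t\in G_0$. The short term $G_0+D-s$ then contains $y+D-s=D+t$, and the long term $G_0+D-s-1$ contains $D+t-1$. The only condition to check is $1\le D\le s-1$, which holds by assumption.

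\emph{The ``$f$-first'' cases \eqref{eq:defect-short-second} and \eqref{eq:defect-long-second}.} Take $R=s-D\in[1,s-1]$. We need $p-R=D-s\in A$. Apply \eqref{eq:defect-first-recurrence} at $x=D$:
\[
\alpha_D+\alpha_{D-s}=1-\delta_D.
\]
With $\alpha_D=0$ and $\delta_D=0$ this gives $\alpha_{D-s}=1$, so the $R$-term is present. Since $\alpha_t=1$ and $t\in[1,2m-s]$, we have $t\in H_0$. The short term $H_0+s-R-1$ therefore contains $t+D-1$, and the long term $H_0+s-R$ contains $t+D$.

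\emph{No modular wrap.} With $D,t\in[1,s-1]$, the values $D+t$ and $D+t-1$ lie in $[1,2s-2]\subseteq[1,2m-4]$, so no wrap occurs.

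\emph{Expected obstacle.} The only delicate point is bookkeeping of the cyclic intervals defining $G_0$ and $H_0$. Specifically, one must confirm that $s+t$ and $t$ fall inside the correct arcs $[s+1,0]^+$ and $[1,-s]^+$; this uses $s\le m-1$. One must also confirm that the defect hypotheses are invoked at the right vertices: $\delta$ at $s+t$ for the row-first case, and at $D$ for the $f$-first case. The fact $0\in\mathfrak D$ is not actually needed beyond fixing the translation. I would carefully verify these two interval memberships and note that the boundary singletons in (L) and (S) play no role here.
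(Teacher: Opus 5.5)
Your proposal is correct and follows essentially the same route as the paper: apply \eqref{eq:defect-first-recurrence} at $x=s+t$ to place $s+t$ in $G_0$ for the row-first lock, and at $x=D$ to certify $D-s\in A$ for the $f$-first lock with $R=s-D$, then read off the translates. Your remark that $0\in\mathfrak D$ is not needed here is also accurate; the paper uses it only to parametrize all of $G_0$ as $\{s+t\}$, which the containments do not require.
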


\begin{proof}
Since $0\in\mathfrak D$, both $0$ and $-s$ are absent from $A$.  Write an element of
$G_0$ as $s+t$, where $1\le t\le2m-s-1$.  By
\eqref{eq:defect-first-recurrence},
\[
        \alpha_{s+t}=1-\delta_{s+t}-\alpha_t.
\]
Thus $\alpha_D=1$, $\alpha_t=0$, and $\delta_{s+t}=0$ make the row-first
lock applicable.  Its short and long translates are respectively
\[
 (s+t)+D-s=D+t,
 \qquad
 (s+t)+D-s-1=D+t-1.
\]
Similarly,
\[
        \alpha_{D-s}=1-\delta_D-\alpha_D.
\]
If $\alpha_D=0$, $\alpha_t=1$, and $\delta_D=0$, the $f$-first lock is
applicable with $R=s-D$ and $t\in H_0$.  Its short and long translates are
$D+t-1$ and $D+t$.  Restricting to $t\le s-1$ only discards valid witnesses.
\end{proof}

\subsection{Short monochromatic runs}
\label{subsec:run-bounds}

A monochromatic run means a cyclic interval on which the word
$(\alpha_x)$ is constant, with value either zero or one.

\begin{lemma}[Run bounds]
\label{lem:run-bounds}
Suppose $A\cap(A+m-j)=\varnothing$ and $|A|=m-1$.
\begin{enumerate}[label=\textup{(\roman*)}]
\item If $j=2$, every monochromatic run has length at most $3$.
\item If $j=3$, every monochromatic run has length at most $4$.
\end{enumerate}
\end{lemma}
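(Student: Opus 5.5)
The plan is to work entirely with the two identities of Lemma~\ref{lem:defect-recurrence}. I will use the complementation identity \eqref{eq:defect-first-recurrence}, $\alpha_x+\alpha_{x-s}=1-\delta_x$, together with the translation identity \eqref{eq:defect-second-recurrence}, $\alpha_{x+2j}-\alpha_x=\delta_x-\delta_{x-s}$, where $|\mathfrak D|=2$. I will also use the size constraint $|A|=m-1$, which is already encoded in $|\mathfrak D|=2$.

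First I read \eqref{eq:defect-second-recurrence} orbitwise in the step-$2j$ graph $\Gamma_{2j}(2m)$. It has $g=\gcd(2j,2m)$ components, each a cycle of length $2m/g\ge 2m/6$. Along each such cycle $\alpha$ is constant except for at most four jumps in total over all cycles: an up-jump after each $x\in\mathfrak D$ and a down-jump after each $x\in\mathfrak D+s$, with cancellation when the two coincide. On each cycle the up-jumps and down-jumps must balance. Hence every orbit on which $\alpha$ is nonconstant carries at least one up-jump and one down-jump. Since there are only two of each, at most two orbits are nonconstant.

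Next, suppose a monochromatic run of length $2j$ exists, say $\alpha\equiv\epsilon$ on $[x,x+2j-1]$ (this is shorter than the claimed bound; the sharper bound comes in the last step). Such a run meets every residue class modulo $g$, hence every step-$2j$ orbit. By \eqref{eq:defect-first-recurrence}, the translated interval $[x-s,x+2j-1-s]$ carries the value $1-\epsilon$ except at the at most two positions $y$ with $y+s\in\mathfrak D$. So, up to two exceptional cells, every orbit contains both an $\epsilon$ and a $1-\epsilon$, and each such orbit is nonconstant. Comparing with the jump budget from the previous paragraph: when $g\ge3$, at most two orbits can be nonconstant, so some orbit is forced to contain both values without carrying any jump, which is a contradiction. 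This already disposes of the cases $g\in\{4,6\}$, apart from bookkeeping of the exceptional cells.

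The remaining case is $g=2$ (or the exceptional cells landing badly). There each of the two orbits must carry exactly one up-jump and one down-jump, so $A$ restricted to each parity class is a single interval in the step-$2j$ ordering. I will then write $A$ explicitly by four parameters: the two defect positions and the two interval endpoints. I will plug this into \eqref{eq:defect-first-recurrence} with $s=m-j$, which sends one parity class to the other when $m-j$ is odd and to itself otherwise. This pins the interval lengths to about $m/2$ and fixes the relative offset. From this normal form the cyclic word $\alpha$ is an explicit interleaving of two ``arithmetic intervals'' with common difference $2j$. I will then read off its longest monochromatic run directly: it should be $3$ for $j=2$ and $4$ for $j=3$ (roughly $j+1$ or $2j-2$). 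Any longer run forces one parity class to be constant on an arc too long for its interval length, contradicting $|A|=m-1$.

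I expect this last normalization to be the main obstacle. It requires carefully tracking where $\mathfrak D$ and $\mathfrak D+s$ sit relative to the run, and handling the coincidence cases $\mathfrak D\cap(\mathfrak D+s)\neq\varnothing$. It must also handle the distinction between runs of length $2j$ and the sharper bounds $3,4$: for $j=3$ a run of length $5<6$ does not meet every orbit mod $6$. For that sharper regime I would first try to combine the run with its $2j$-translate, which is constant away from the jump locations, to manufacture a run of length at least $2j$, and then reduce to the argument above.
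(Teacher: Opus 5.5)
Your first step (at most two nonconstant orbits of translation by $2j$) is the paper's. After that, the proposal does not prove the lemma where it has content, and the one concrete prediction it makes there is wrong.

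\textbf{The decisive cases.} These are $\gcd(2m,2j)=2$, i.e.\ $j=2$ with $m$ odd and $j=3$ with $3\nmid m$. When $s=m-j$ is odd, the arc lengths are \emph{not} pinned near $m/2$. For $m$ odd and $j=2$, any $m-1$ odd residues form an admissible $A$, because $A+s$ consists of even residues; its arcs have lengths $0$ and $m-1$. In general the admissible sets form a one-parameter family of arc lengths.

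So the run bound cannot come from arc lengths. It comes from consecutive ordinary vertices occupying cyclically interleaved positions. The paper extracts this from the step-$s$ graph:
\begin{itemize}
\item when $\gcd(2m,s)=1$ the graph is one even cycle, and the phase word $\varphi_t=b_t\oplus(t\bmod 2)$, with $b_t=\alpha_{ts}$, has exactly two transitions;
\item with $u=s^{-1}$, the step coordinates $0,u,2u,3u$ (and $4u$ for $j=3$) of consecutive vertices occur in a cyclic order whose parities change four times;
\item hence equal ordinary bits would force four transitions of $\varphi$, a contradiction.
\end{itemize}
Your two step-$2j$ arcs are the even- and odd-indexed halves of $\varphi$, up to complementing one of them (since $2s\equiv-2j$). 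So your route can be completed, but only by carrying out exactly this interleaving computation, which ``read off directly'' leaves undone.

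For $m$ odd and $j=3$, $s$ is even and each parity class is a maximum independent set in an odd step-$s$ cycle. There the arc lengths really are $(m\mp1)/2$, and your approach does go through. The paper instead notes that three same-valued points of one parity would cut that odd cycle into three odd arcs.

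\textbf{The cases $g\in\{4,6\}$.} Orbit counting alone does not close these. For $g=4$, a zero run of length $4$ containing both defects forces only two classes to be nonconstant, which fits your budget. What is needed is the paper's pairing.
\begin{itemize}
\item Since $s\equiv2\pmod 4$ (resp.\ $s\equiv3\pmod 6$), a defect in class $c$ creates jumps in classes $c$ and $c+s$.
\item Balance on each cycle then forces both defects into one pair $\{c,c+s\}$.
\item Every other pair is constant with opposite values by \eqref{eq:defect-first-recurrence}.
\end{itemize}
For $j=3$ you must exclude runs of length $5$, and translating a run by $2j$ does not produce a contiguous run of length $6$. With the pairing, five consecutive residues contain both members of two of the three pairs, at least one of which is constant with opposite values.

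Finally, the pairing fails for $m\equiv2\pmod4$ ($j=2$) and $m\equiv3\pmod6$ ($j=3$). There you must note that the step-$s$ graph is a union of odd cycles with independence number below $m-1$, so no such $A$ exists.
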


\begin{proof}
We use \eqref{eq:defect-second-recurrence}.  Put
$g=\gcd(2m,2j)$.  Along the $g$ cycles generated by translation by $2j$,
there are at most four transition edges altogether.  Every nonconstant
cycle has at least two transitions, so at most two of these cycles are
nonconstant.

We need a paired-cycle refinement only in the two divisible cases below.
If $j=2$ and $4\mid m$, then $g=4$ and $s\equiv2\pmod4$; if
$j=3$ and $6\mid m$, then $g=6$ and $s\equiv3\pmod6$.  In either case a
defect in residue class $c$ creates possible transition edges in the two
classes $c$ and $c+s$.  A cyclic word cannot have exactly one transition.
Since there are only two defects, all nonconstant residue cycles therefore
belong to at most one pair $\{c,c+s\}$.

In fact, in each of these two divisible cases the two defects lie in the
same residue pair.  Otherwise every residue cycle would contain at most one
possible transition edge in
\eqref{eq:defect-second-recurrence}.  A nonconstant cyclic binary word has at
least two transitions, so every residue cycle would be constant.  The
recurrence would then give
\[
        \delta_x=\delta_{x-s}\qquad\text{for every }x,
\]
and hence $\mathfrak D=\mathfrak D+s$.  This is impossible: when
$j=2$ and $4\mid m$, translation by $s=m-2$ has orbit length $m>2$;
when $j=3$ and $6\mid m$, translation by $s=m-3$ has orbit length
$2m/3>2$.  Thus both defects lie in one residue pair.  Every other residue
pair is defect-free, and
\eqref{eq:defect-first-recurrence} shows that its two constant values are
opposite.

First take $j=2$.  If $m\equiv2\pmod4$, the step-$s$ graph consists of four
odd cycles and has independence number $m-2$, so no such $A$ exists.  If
$m\equiv0\pmod4$, then $g=4$ and $s\equiv2\pmod4$.  At most one of the
residue pairs
\[
        \{0,2\},\qquad \{1,3\}
\]
is nonconstant.  The other pair is constant, and its two constants are
opposite by \eqref{eq:defect-first-recurrence}.  Four consecutive integers
contain both members of that constant pair, so they cannot be monochromatic.

It remains to take $j=2$ and $m$ odd.  Then $\gcd(2m,s)=1$, so the step-$s$
graph is one even cycle.  In its cyclic coordinate, an independent set of
size $m-1$ has $m-1$ ones and $m+1$ zeros.  The $m-1$ nonempty zero-gaps
between successive ones therefore contain exactly two excess zeros, so the
cycle has exactly two zero--zero edges.  Put $b_t=\alpha_{ts}$ and
\[
        \varphi_t=b_t\oplus(t\bmod2).
\]
Because adjacent ones are forbidden, one has
$\varphi_t\ne\varphi_{t+1}$ exactly when $b_t=b_{t+1}=0$.  Thus
$\varphi$ has exactly two transitions.

Let $u=s^{-1}\pmod{2m}$.  It is odd and satisfies
\[
        2u\equiv m-1\pmod{2m}.
\]
If $m\equiv3\pmod4$, then $u=(m-1)/2$, and the four step coordinates of
four ordinary consecutive vertices occur cyclically as
\[
        0,\,u,\,m-1,\,u+m-1.
\]
If $m\equiv1\pmod4$, then $u=(3m-1)/2$ and
$u+m-1\equiv(m-3)/2\pmod{2m}$; their cyclic order is
\[
        0,\,u+m-1,\,m-1,\,u.
\]
In the first order the corresponding ordinary-coordinate indices are
$0,1,2,3$, and in the second they are $0,3,2,1$.  Since $u$ is odd and
$m-1$ is even, equal ordinary membership bits require alternating values
of $\varphi$ at the four cyclic points in either case.  This would force at
least four transitions, contradicting the two-transition form of
$\varphi$.  The same argument applies to a zero run.

Now take $j=3$.  If $m\equiv3\pmod6$, the step-$s$ graph consists of six
odd cycles and has independence number $m-3$, so again no such $A$ exists.
If $m\equiv0\pmod6$, then $g=6$ and $s\equiv3\pmod6$.  At most one of the
three residue pairs
\[
        \{0,3\},\qquad\{1,4\},\qquad\{2,5\}
\]
is nonconstant; every other pair is constant with opposite values.  Five
consecutive residues contain both members of two of these three pairs, and
therefore contain both values.  Hence no monochromatic run has length five.

Suppose next that $m$ is even and $3\nmid m$.  Then $\gcd(2m,s)=1$, and the
same two-transition phase word $\varphi$ is available.  Let
$u=s^{-1}\pmod{2m}$.  If $m\equiv4\pmod6$, then
\[
        u=\frac{m-1}{3}.
\]
Here
\[
        0<u<2u<3u=m-1<4u<2m,
\]
so the five points $0,u,2u,3u,4u$ occur in that cyclic order.  Their
ordinary-coordinate indices are $0,1,2,3,4$, with parity sequence
even, odd, even, odd, even and four cyclic changes.  If $m\equiv2\pmod6$, then
\[
        u=\frac{5m-1}{3},
\]
and reduction modulo $2m$ gives
\[
 4u\equiv\frac{2m-4}{3},\qquad
 3u\equiv m-1,\qquad
 2u\equiv\frac{4m-2}{3}.
\]
These residues satisfy
\[
 0<4u<3u<2u<u<2m,
\]
so the cyclic index order is $0,4,3,2,1$, whose parity sequence is
even, even, odd, even, odd, again with four cyclic changes.  Five equal ordinary bits
would therefore force at least four transitions of $\varphi$, a
contradiction.

Finally suppose that $m$ is odd and $3\nmid m$.  The step-$s$ graph consists
of two odd cycles, one on each parity class, and $A$ is maximum independent
on both.  A run of five equal ordinary bits contains three bits of one
parity, corresponding in the parity quotient $\Z_m$ to three
consecutive ordinary points.  Put $a=(m-3)/2$ and $u=a^{-1}\pmod m$.  If
$m\equiv1\pmod6$, then $u=2(m-1)/3$ and the three step-cycle arcs have
lengths
\[
        \frac{m-4}{3},\qquad
        \frac{m+2}{3},\qquad
        \frac{m+2}{3}.
\]
If $m\equiv5\pmod6$, then $u=(m-2)/3$ and the arc lengths are
\[
        \frac{m-2}{3},\qquad
        \frac{m-2}{3},\qquad
        \frac{m+4}{3}.
\]
All three lengths are odd.  In a maximum independent set of an odd cycle,
the cyclic gaps between consecutive selected vertices are all $2$ except
for one gap of length $3$; between consecutive unselected vertices they
are all $2$ except for one gap of length $1$.  Each of the three arcs determined by three vertices of the same
membership value is a union of consecutive same-value gaps, and the unique
odd gap belongs to exactly one of those arcs.  Hence at most one of the
three arcs is odd.  The three displayed odd arcs are impossible.  This proves the
$j=3$ bound for both one-runs and zero-runs.
\end{proof}

\subsection{The reflection argument on adjacent diagonals}
\label{subsec:reflection-diagonals}

\begin{lemma}[A missing lock value forces a long constant block]
\label{lem:reflection-block}
Assume every monochromatic run of $A$ has length at most $R$, and let
$p\in\mathfrak D$.  For
$\star\in\{\mathrm{short},\mathrm{long}\}$, let
$\mathcal F_p^\star$ denote the corresponding forbidden set from
\eqref{eq:all-split-short} or \eqref{eq:all-split-long}.  Then
\begin{equation}\label{eq:reflection-complement-size}
 \left|\Z_{2m}\setminus\mathcal F_p^\star\right|
 \le 10R+2j+2.
\end{equation}
More precisely, after translating $p$ to zero and using representatives in
$[0,2m-1]$,
\begin{equation}\label{eq:reflection-boundary-containment}
 \Z_{2m}\setminus\mathcal F_p^\star
 \subseteq
 [0,5R]\ \cup\ [2s-5R-1,2m-1].
\end{equation}
\end{lemma}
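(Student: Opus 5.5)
Translate $p$ to $0$ and work with Lemma~\ref{lem:defect-row-word-form}. For each target value $v$ on the two adjacent additive diagonals $D+t=v$ and $D+t=v+1$ (with $1\le D,t\le s-1$), the row-first and $f$-first witnesses forbid $v$ or $v-1$ depending only on the pattern $(\alpha_D,\alpha_t)$. For a fixed $\star$, the value $v$ is forbidden as soon as one of two things happens. Either there is a pair $(D,t)$ on the relevant diagonal with $\alpha_D\ne\alpha_t$ of the correct orientation, or there is one of the opposite orientation on the neighbouring diagonal. In both cases the harmless defect side conditions $\delta_{s+t}=0$ or $\delta_D=0$ must also hold. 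Since $|\mathfrak D|=2$, these side conditions exclude at most two values of $t$ and two values of $D$. The first step is therefore to record precisely that an \emph{unforbidden} value $v\in[1,2s-3]$ forces the following. On the diagonal $\{(D,t):D+t=v\}$ there is no pair with $\alpha_D=1,\alpha_t=0$, and on the diagonal $D+t=v+1$ there is no pair with $\alpha_D=0,\alpha_t=1$ (orientations swapped for long versus short), apart from the $O(1)$ defect exceptions.

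The key step is a reflection argument. On the diagonal $D+t=v$, the map $D\mapsto v-D$ pairs positions, so the condition ``never $(1,0)$'' means that $\alpha_D\le\alpha_{v-D}$ for all admissible $D$. Applying it to both $D$ and $v-D$ gives $\alpha_D=\alpha_{v-D}$, so the word is palindromic about $v/2$ on the admissible range. On the adjacent diagonal the reversed inequality yields $\alpha_D\ge\alpha_{v+1-D}$, hence $\alpha_D=\alpha_{v+1-D}$. Composing the two reflections gives $\alpha_D=\alpha_{D+1}$ on a long interval, up to the defect exceptions. Hence $\alpha$ is constant on an interval of $D$ whose length is comparable to $\min(v,2s-v)$ minus $O(1)$. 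Because runs have length at most $R$, this forces $\min(v,2s-v)$ to be at most about $5R$, counting the boundary losses from the range restriction $1\le D,t\le s-1$ and the at most four defect-excluded positions, each of which can break the chain. I expect the constants $5R$ and $2s-5R-1$ to come from this bookkeeping: the defect exceptions split the chain into at most five pieces, one of which must have length at least $R+1$ unless $v$ lies within $5R$ of an end.

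The containment~\eqref{eq:reflection-boundary-containment} then follows. The unforbidden values inside $[1,2s-3]$ lie in $[0,5R]\cup[2s-5R-1,2s-2]$, and all values in $[2s-1,2m-1]$ fall inside the second interval automatically. The size bound~\eqref{eq:reflection-complement-size} comes from counting: $5R+1$ values in the first interval and $2m-2s+5R+1=2j+5R+1$ in the second, for a total of $10R+2j+2$.

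The main obstacle is the chaining step. One must check that the two reflections compose to a unit shift only while both $D$ and its images stay inside $[1,s-1]$ and avoid the defect-excluded indices. One must also verify that for each $v$ in the middle range the unbroken stretch genuinely has length exceeding $R$. I would first treat $v\le s$, where the admissible range is $[1,v-1]$, and then obtain $v>s$ by the symmetric substitution $D\mapsto s-D$.
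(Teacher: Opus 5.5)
Your proposal is correct and is essentially the paper's proof. For an unforbidden $x$, the two orderings on the diagonal of sum $x$ give $\alpha_i=\alpha_{x-i}$, and the two orderings on the diagonal of sum $x+1$ give $\alpha_{x-i}=\alpha_{i+1}$; the resulting unit shift on the range $I_x$, where $|I_x|=\min\{x-1,2s-x-2\}$, broken at most four times, forces $|I_x|+1\le5R$.

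The one point to tighten is the reason for ``four''. The translated defect $0$ never enters the side conditions, since $s+t\in[s+1,2s-1]$ and $D\in[1,s-1]$. So only the other defect excludes one $t$ and one $D$, and each exclusion breaks two links because both orderings are used. Your ``two values of $t$ and two values of $D$'' would instead allow eight breaks. Also, the case $x\ge s$ needs only the direct range $[x-s+1,s-2]$, not the substitution $D\mapsto s-D$, which is not a symmetry of the data.
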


\begin{proof}
Fix
$\star\in\{\mathrm{short},\mathrm{long}\}$.  Fix $x$ with
$2\le x\le2s-3$ and define
\[
 I_x=
 \bigl[\max\{1,x-s+1\},\ \min\{s-2,x-1\}\bigr].
\]
For every $i\in I_x$, the three indices
\[
        i,\qquad x-i,\qquad i+1
\]
lie in $[1,s-1]$.

If $\star=\mathrm{short}$ and
$x\notin\mathcal F_0^{\rm short}$, apply
\eqref{eq:defect-short-first} to the two ordered decompositions
$x=i+(x-i)=(x-i)+i$.  Unless one of $s+i$ and $s+x-i$ is the other defect,
the two resulting inequalities give
\[
        \alpha_i=\alpha_{x-i}.
\]
Apply \eqref{eq:defect-short-second} to
$x+1=(x-i)+(i+1)=(i+1)+(x-i)$.  Unless one of $x-i$ and $i+1$ is the other
defect, it gives
\[
        \alpha_{x-i}=\alpha_{i+1}.
\]
Thus $\alpha_i=\alpha_{i+1}$ for every $i\in I_x$ apart from at most four
values of $i$.

If $\star=\mathrm{long}$ and
$x\notin\mathcal F_0^{\rm long}$, use
\eqref{eq:defect-long-second} on the diagonal of sum $x$ and
\eqref{eq:defect-long-first} on the diagonal of sum $x+1$.  The same
conclusion follows, again with at most four exceptional values.

The word on the $|I_x|+1$ consecutive positions from the first element of
$I_x$ through one past its last element therefore has at most four changes.
It is a union of at most five monochromatic runs, each of length at most
$R$.  Hence
\[
        |I_x|+1\le5R.
\]
But
\[
        |I_x|=\min\{x-1,\,2s-x-2\}.
\]
Therefore $x\le5R$ or $x\ge2s-5R-1$.  The representatives outside
$[2,2s-3]$ already lie in one of those two boundary intervals.  This proves
\eqref{eq:reflection-boundary-containment}.  Its right side has at most
\[
 (5R+1)+(2m-2s+5R+1)=10R+2j+2
\]
points, proving \eqref{eq:reflection-complement-size}.
\end{proof}

\begin{theorem}[Uniform defect-row forcing for $j=2,3$]
\label{thm:defect-row-forcing}
Let $A\subseteq\Z_{2m}$ have size $m-1$ and miss $s=m-j$.
\begin{enumerate}[label=\textup{(\roman*)}]
\item If $j=2$ and $m\ge38$, then every defect row $p\in\mathfrak D_s(A)$ satisfies
\[
 |\mathcal F_p^{\rm short}|\ge m+2,
 \qquad
 |\mathcal F_p^{\rm long}|\ge m+2.
\]
\item If $j=3$ and $m\ge50$, the same conclusion holds.
\end{enumerate}
\end{theorem}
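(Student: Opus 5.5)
This is a direct combination of Lemma~\ref{lem:run-bounds} with Lemma~\ref{lem:reflection-block}, followed by a numerical check. First, Lemma~\ref{lem:run-bounds} gives the run bound $R$ for the membership word of $A$: $R=3$ when $j=2$ and $R=4$ when $j=3$. The hypotheses of that lemma are exactly ours, namely $|A|=m-1$ and $A\cap(A+m-j)=\varnothing$. In the residue classes where the lemma notes that no such $A$ exists, the statement holds vacuously.

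Next, fix a defect row $p\in\mathfrak D_s(A)$ and $\star\in\{\mathrm{short},\mathrm{long}\}$. Lemma~\ref{lem:reflection-block} then gives
\[
|\Z_{2m}\setminus\mathcal F_p^\star|\le 10R+2j+2,
\]
and so
\[
|\mathcal F_p^\star|\ge 2m-10R-2j-2.
\]
It remains to check the numbers. For $j=2$ and $R=3$ the bound is $2m-36$, and $2m-36\ge m+2$ exactly when $m\ge38$. For $j=3$ and $R=4$ the bound is $2m-48$, and $2m-48\ge m+2$ exactly when $m\ge50$. These thresholds match the statement.

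The only point needing care is that Lemma~\ref{lem:reflection-block} is used within its range. Its proof relies on $s=m-j$ being large enough that the interval $[2,2s-3]$ is nonempty and that the indices $i$, $x-i$, $i+1$ stay in $[1,s-1]$. This holds comfortably for $m\ge38$. The lemma was also stated for an arbitrary run bound $R$ and an arbitrary defect row, so nothing further is required. I expect no real obstacle: the substantive work is already contained in the run bounds and the adjacent-diagonal reflection argument, and this step is bookkeeping of constants.
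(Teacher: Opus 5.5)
Your proposal is correct and is the paper's own argument. You feed the run bounds $R=3$ (for $j=2$) and $R=4$ (for $j=3$) from Lemma~\ref{lem:run-bounds} into the complement bound $10R+2j+2$ of Lemma~\ref{lem:reflection-block}, which gives complements of size at most $36$ and $48$, and hence forbidden sets of size at least $2m-36\ge m+2$ for $m\ge38$ and $2m-48\ge m+2$ for $m\ge50$.
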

\begin{proof}
For $j=2$, Lemmas~\ref{lem:run-bounds} and~\ref{lem:reflection-block} give a complement of size at most
\[
        10\cdot3+2\cdot2+2=36\le m-2.
\]
For $j=3$, the corresponding bound is
\[
        10\cdot4+2\cdot3+2=48\le m-2.
\]
In either case the forbidden set has at least $2m-(m-2)=m+2$ points.
\end{proof}

\begin{corollary}[The four $j=2,3$ middle lengths]
\label{cor:j23-middle-lengths}
For every $m\ge50$, if the extra edge misses $m-j$ with
$j\in\{2,3\}$, then both associated lengths
\[
        m-j+1,
        \qquad
        m+j+1
\]
occur.
\end{corollary}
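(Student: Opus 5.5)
The plan is to combine Lemma~\ref{lem:defect-recurrence}, Theorem~\ref{thm:defect-row-forcing} and Lemma~\ref{lem:all-split-exclusion}. Fix $j\in\{2,3\}$ and put $s=m-j$, with $A:=f$. We may assume $f$ misses $s$; otherwise the chord lemma (Lemma~\ref{lem:chord}) applied to $f\notin E(C)$ immediately gives both $s+1=m-j+1$ and $2m-s+1=m+j+1$. Since $m\ge50$, we have $2\le s\le m-1$, so the hypotheses preceding Lemma~\ref{lem:defect-recurrence} hold: $|A|=m-1$ and $A\cap(A+s)=\varnothing$.

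Next, Lemma~\ref{lem:defect-recurrence} gives $|\mathfrak D_s(A)|=2$, so a defect row $p\in\mathfrak D_s(A)$ exists. Theorem~\ref{thm:defect-row-forcing} applies: part (i) for $j=2$ needs $m\ge38$, and part (ii) for $j=3$ needs $m\ge50$. It yields
\[
        |\mathcal F_p^{\rm short}|\ge m+2,
        \qquad
        |\mathcal F_p^{\rm long}|\ge m+2.
\]

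Now suppose the short target $s+1=m-j+1$ were absent. Lemma~\ref{lem:all-split-exclusion} then forces $e_p\cap\mathcal F_p^{\rm short}=\varnothing$. This leaves at most $2m-(m+2)=m-2$ vertices available to the $(m-1)$-set $e_p$, which is impossible. The same argument with $\mathcal F_p^{\rm long}$ produces the long target $2m-s+1=m+j+1$.

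There is essentially no obstacle here; the corollary is bookkeeping. The one point to check is that the split-lock lemmas underlying Lemma~\ref{lem:all-split-exclusion} were stated for $2\le s\le m-1$ together with $D,R\ge1$ and $D+R=s$. These conditions hold for $s=m-j$ with $j\in\{2,3\}$, and the row $p$ can be arbitrary, in particular a defect row. So the cited results apply verbatim.
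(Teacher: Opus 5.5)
Your proposal is correct and is the paper's own argument: its proof combines Theorem~\ref{thm:defect-row-forcing} at a defect row with Lemma~\ref{lem:all-split-exclusion}. Your additional checks (the chord-lemma case, the range $2\le s\le m-1$, and the existence of a defect row via $|\mathfrak D_s(A)|=2$ from Lemma~\ref{lem:defect-recurrence}) only make explicit what the paper leaves implicit.
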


\begin{proof}
Combine Theorem~\ref{thm:defect-row-forcing} and
Lemma~\ref{lem:all-split-exclusion}.
\end{proof}

\subsection{The distance-\texorpdfstring{$(m-1)$}{(m-1)} case}
\label{subsec:j1}

Here $s=m-1$.  The two-defect recurrence can be solved explicitly.
A dihedral change of coordinates is always applied to the entire labeled
Hamiltonian setup.  Under a rotation $\phi(x)=x+t$, define
$e'_{p+t}=\phi(e_p)$.  Under a reflection $\phi(x)=c-x$, define
\[
        e'_{c-p-1}=\phi(e_p),
\]
because the adjacent pair indexed by $p$ is carried to the adjacent pair
indexed by $c-p-1$.  In both cases replace $f$ by $\phi(f)$.  Thus all
incidences, edge distinctness, and the row labels used below are preserved.

\begin{lemma}[Normal form for a distance-$(m-1)$-free edge]
\label{lem:j1-normal-form}
After a dihedral change of coordinates, every $(m-1)$-set
$A\subseteq\Z_{2m}$ satisfying
$A\cap(A+m-1)=\varnothing$ has the following form.  For some
$0\le a\le\lfloor(m-1)/2\rfloor$, put $\delta=2a+1$; then
\begin{equation}\label{eq:j1-normal-form}
 A=A_a:=
 [2,2a]_2
 \cup[2a+2,m]
 \cup[m+2,m+2a]_2.
\end{equation}
Its two defects are $0$ and $\delta$.  The case $a=0$ is a cyclic interval
of length $m-1$.
\end{lemma}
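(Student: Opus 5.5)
The plan is to solve the two-defect recurrence of Lemma~\ref{lem:defect-recurrence} explicitly in the case $j=1$, $s=m-1$, and then normalize by a dihedral map. By that lemma, $|\mathfrak D|=2$. Write $\mathfrak D=\{d_1,d_2\}$. Translation by $2j=2$ then satisfies
\[
\alpha_{x+2}-\alpha_x=\delta_x-\delta_{x-s}.
\]
So on each of the two parity classes (the step-$2$ cycles of length $m$), the membership word is constant except for the following changes:
\begin{itemize}
\item an upward change $0\to1$ from $x$ to $x+2$ at each defect $x=d_i$;
\item a downward change $1\to0$ from $x$ to $x+2$ at each $x=d_i+s$.
\end{itemize}
In the ordinary coordinates of the cycle, the recurrence gives $\alpha_{d_i}=0$. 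It also gives $\alpha_{d_i+2}=1$, $\alpha_{d_i+s}=1$ and $\alpha_{d_i+s+2}=0$, unless two of these changes cancel.

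I will first translate so that $d_1=0$. The key step is to determine the parity of $\delta:=d_2-d_1$, which I expect to be odd in the relevant cases. Every cyclic word has equally many up- and down-changes. Hence the up-change at $d_i$ must be matched, within the same parity class, by a down-change at some $d_k+s$.

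If $m$ is even, then $s$ is odd. So $d_k+s$ lies in the class opposite to $d_k$, and the defects must lie in opposite classes; this gives $\delta$ odd.

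If $m$ is odd, then the step-$s$ graph consists of two odd $m$-cycles. The set $A$ is then maximum independent on each of them, and Lemma~\ref{lem:odd-distance}-type gap counting puts exactly one defect on each cycle. These cycles are again the two parity classes, so $\delta$ is odd in this case as well.

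After possibly applying the reflection $x\mapsto\delta-x$, which swaps the two defects and replaces $\delta$ by $2m-\delta$ (also odd), I may assume $\delta=2a+1\le m$. This reflection is applied to the whole labeled setup, as described before the lemma.

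Next I read off $A$ from the change structure. The even class runs from the up-change at $0$ to the down-change at $\delta+s=m+2a$. It is therefore $1$ exactly on $[2,m+2a]_2$, apart from positions where the first recurrence
\[
\alpha_x+\alpha_{x-s}=1-\delta_x
\]
forces a $0$. The odd class runs from the up-change at $\delta$ to the down-change at $s=m-1$. Then I intersect these with the constraint $\alpha_x+\alpha_{x+m+1}\le1$, which says that the complementary class is forced on the overlapping stretch. This should leave exactly three pieces:
\begin{itemize}
\item the even points $[2,2a]_2$ before the odd class switches on;
\item the full interval $[2a+2,m]$, where both classes are $1$ and the forced complements do not conflict;
\item the even points $[m+2,m+2a]_2$.
\end{itemize}
A final size count, $a+(m-2a-1)+a=m-1$, confirms consistency. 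It also shows that no other placement of the changes can yield $|A|=m-1$. For $a=0$ the formula degenerates to the interval $[2,m]$.

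I expect the main obstacle to be the bookkeeping in the middle step. There one has to verify that the change points cannot cancel or coincide in some other way, for example when $\delta$ is small, when $2a+1=m$, or when changes coincide across classes. One also has to check, using the first recurrence, that the two parity-class descriptions mesh exactly into the three displayed pieces rather than some shifted variant. I would handle this by checking directly that $A_a$ satisfies $A_a\cap(A_a+m-1)=\varnothing$ with defect set $\{0,\delta\}$. Then I would argue uniqueness: the defect pair and the recurrence determine $\alpha$ completely, since the two constant values on each class are fixed by $\alpha_0=\alpha_\delta=0$.
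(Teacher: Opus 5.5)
Your route is essentially the paper's. You use the step-$2$ recurrence and a parity argument that $\delta$ is odd: class balance of up- and down-changes for even $m$, and one defect on each odd step-$s$ cycle for odd $m$. You then normalize to $\delta\le m$ and read each parity class off its up- and down-changes, starting from $\alpha_0=\alpha_\delta=0$. The parity step is fine, but three details of the execution are wrong or missing as written.

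First, your reading-off of the classes is correct only for even $m$. When $m$ is odd, $s=m-1$ is even, so each down-change $d_i+s$ stays in the class of $d_i$. The even class therefore has its up-change at $0$ and its down-change at $s=m-1$, giving ones $[2,m-1]_2$. The odd class runs from $\delta$ to $\delta+s=m+2a$, giving ones $[2a+3,m+2a]_2$. The union is still $A_a$, but the piece $[m+2,m+2a]_2$ then consists of odd points. Your expression $[2,m+2a]_2$ for the even class is not even parity-consistent in this case. This is why the paper solves the two parities of $m$ separately.

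Second, you flag but do not carry out the cancelling case $m$ even, $\delta=m-1$, which the paper treats on its own. There the up-change at $\delta$ cancels the down-change at $0+s$, the odd class is identically zero, and $A=[2,2m-2]_2=A_{(m-2)/2}$.

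Third, the reflection $x\mapsto\delta-x$ does not swap the defects. Under $x\mapsto c-x$ the zero--zero edge $\{d-s,d\}$ goes to $\{c-d,c-d+s\}$, so a defect $d$ becomes the defect $c+s-d$, and $\{0,\delta\}$ goes to $\{s,\delta+s\}$. No reflection is needed anyway: translating the other defect to $0$ replaces $\delta$ by $2m-\delta$, and one of the two lies in $[1,m]$.

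Finally, two of your steps are superfluous: the positions where the first recurrence ``forces a $0$'' and the intersection with $\alpha_x+\alpha_{x+m+1}\le1$. Once the change positions and initial values are fixed, each class is completely determined. That determinacy, which you note at the end, is the entire uniqueness argument.
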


\begin{proof}
The step-$(m-1)$ graph is one even cycle when $m$ is even and two odd
cycles when $m$ is odd.  An independent set of size $m-1$ has exactly two
zero--zero defect edges in total.  Rotate one defect to $0$ and reflect so
that the other defect $\delta$ lies in $[1,m]$.

For $j=1$, recurrence \eqref{eq:defect-second-recurrence} becomes
\begin{equation}\label{eq:j1-step-two-recurrence}
        \alpha_{x+2}-\alpha_x=\delta_x-\delta_{x-(m-1)}.
\end{equation}
Thus the possible nonzero increments occur at
\[
        0,\quad \delta,\quad m-1,\quad \delta+m-1,
\]
with coincident contributions combined.  If $m$ is odd, the step-$(m-1)$
graph has one defect in each parity class, so $0$ and $\delta$ have
opposite parity.  If $m$ is even and $\delta\ne m-1$, the four displayed
positions are distinct.  Summing \eqref{eq:j1-step-two-recurrence} around
each of the two step-$2$ cycles shows that each parity class has total
increment zero.  If $\delta$ were even, both positive increments would lie
on the even class and both negative increments on the odd class, which is
impossible.  Hence $\delta$ is odd in this case as well.  The remaining
even boundary case is $\delta=m-1$, which is also odd.  Therefore
\[
        \delta=2a+1
\]
for some $0\le a\le\lfloor(m-1)/2\rfloor$.

We now solve the two parity subsequences.  Since $0$ and $\delta$ are
defects, \eqref{eq:defect-first-recurrence} at $x=0$ and $x=\delta$ gives
\[
        \alpha_0=\alpha_{m+1}=0,
        \qquad
        \alpha_\delta=\alpha_{\delta-s}=0.
\]

Suppose first that $m$ is odd.  On the even subsequence the $+1$ increment
is at $0$ and the $-1$ increment is at $m-1$; hence its ones are
$2,4,\ldots,m-1$.  The value $\alpha_\delta=0$ supplies the initial value on the odd
subsequence.  Its $+1$
increment is at $\delta$ and its $-1$ increment is at
$\delta+m-1$; hence its ones are
\[
        \delta+2,\delta+4,\ldots,\delta+m-1.
\]
Their union is exactly the set in \eqref{eq:j1-normal-form}.

Now suppose that $m$ is even and $\delta<m-1$.  On the even subsequence the
$+1$ increment is at $0$ and the $-1$ increment is at
$\delta+m-1=m+2a$; its ones are therefore
$2,4,\ldots,m+2a$.  On the odd subsequence the $+1$ increment is at
$\delta$ and the $-1$ increment is at $m-1$; its ones are
$\delta+2,\delta+4,\ldots,m-1$.  Again their union is precisely
\eqref{eq:j1-normal-form}.

Finally let $m$ be even and $\delta=m-1$.  At $x=m-1$ the positive and
negative contributions in \eqref{eq:j1-step-two-recurrence} cancel.  The
only nonzero increments are $+1$ at $0$ and $-1$ at $2m-2$.  Hence the
even subsequence consists of all nonzero even residues, while the odd
subsequence is constant.  The even subsequence already contains $m-1$
points, so the size condition forces the odd subsequence to be identically
zero.  This is \eqref{eq:j1-normal-form} with
$a=(m-2)/2$.

Conversely, the displayed set has size $m-1$, its only zero--zero
step-$(m-1)$ edges are the defects $0$ and $2a+1$, and it contains no
step-$(m-1)$ edge.  Recurrence \eqref{eq:defect-first-recurrence} determines
all membership bits from the two defect positions, so the list is complete.
\end{proof}

\begin{lemma}[Noninterval $j=1$ states are forced]
\label{lem:j1-noninterval}
Let $m\ge8$ and $a\ge1$ in Lemma~\ref{lem:j1-normal-form}.  Put
$p=2a=\delta-1$.  Then both forbidden sets have complements of size at
most six.  More precisely:
\begin{align}
 a=1:\quad
 &\Z_{2m}\setminus\mathcal F_p^{\rm short}
   =\{2,3,4,m+4\},\notag\\
 &\Z_{2m}\setminus\mathcal F_p^{\rm long}
   =\{1,2,3,m+2,m+3\};\notag\\[1mm]
 2\le a,\ 2a+1<m:\quad
 &\Z_{2m}\setminus\mathcal F_p^{\rm short}
   =\{2a-2,2a,2a+1,2a+2\},\notag\\
 &\Z_{2m}\setminus\mathcal F_p^{\rm long}
   =\{2a-1,2a,2a+1,m+1\};\notag\\[1mm]
 2a+1=m:\quad
 &\Z_{2m}\setminus\mathcal F_p^{\rm short}\notag\\
 &\quad=\{m-3,m-1,m\}\notag\\
 &\qquad{}\cup\{m+1,m+2,m+4\},\notag\\
 &\Z_{2m}\setminus\mathcal F_p^{\rm long}
   =\{m-2,m-1,m,m+1,m+3\}.
 \label{eq:j1-complement-identities}
\end{align}
Consequently both lengths $m$ and $m+2$ occur.
\end{lemma}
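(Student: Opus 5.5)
The plan is to compute both forbidden sets at the defect row $p=2a$ exactly, proving each displayed identity by two inclusions; the final clause will then follow from Lemma~\ref{lem:all-split-exclusion}. Throughout, $s=m-1$, and $A=A_a$ is the explicit set in \eqref{eq:j1-normal-form}. Its defects are $\mathfrak D=\{0,2a+1\}$, so $p=2a$ is not a defect. The row is therefore not covered by Lemma~\ref{lem:defect-row-word-form}, and I will work directly from the definitions \eqref{eq:all-split-short} and \eqref{eq:all-split-long}.

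\textbf{Setup.} First write out, in the integer lift with $p=2a$, the sets
\[
G_p=A\cap[p+m,p]^+ \quad\text{and}\quad H_p=A\cap[p+1,p-m+1]^+ .
\]
Each is $A$ with an interval of length $m-1$ removed, so each is an explicit union of at most three arithmetic progressions of step $1$ or $2$. Next list the admissible split parameters:
\[
\{D\in[1,m-2]:2a+D\in A\} \quad\text{and}\quad \{R\in[1,m-2]:2a-R\in A\}.
\]
Reading these off \eqref{eq:j1-normal-form}, the $D$'s form an initial interval followed by an even progression, and the $R$'s form an even progression followed by the tail wrapping into $[m+2,m+2a]_2\cup[2a+2,m]$.

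\textbf{Inclusion ``complement $\subseteq$ displayed set''.} Every vertex outside the displayed set must lie in some translate $G_p+D-s-1$, $G_p+D-s$, $H_p+s-R$ or $H_p+s-R-1$, or be one of the two boundary singletons. I will organise this as a sumset computation: the union over $D$ of $G_p+D$ is a sum of an interval with a progression, which is an interval apart from parity holes near its ends. The $f$-first family then fills those holes, because its translates are shifted by one relative to the row-first family. Most vertices can be covered by an interval-plus-interval argument. Only the $O(1)$ vertices near $2a$ and near $m+1$ to $m+4$ need explicit witnesses, and I will write those down one by one.

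\textbf{Reverse inclusion.} For each listed vertex $v$ I must show that no admissible $(D,y)$ or $(R,y)$ produces it. For instance, in the case $2\le a$, $2a+1<m$, short target, take $v=2a+1$. Membership would require either $y=v-D+s\in G_p$ with $2a+D\in A$, or $y=v-s+R+1\in H_p$ with $2a-R\in A$. I will check, by parity and by the interval ranges of $G_p$ and $H_p$, that each such $y$ lands in a zero of $A$ or outside the allowed window. The defect relations $\alpha_0=\alpha_{m+1}=0$ and $\alpha_{2a+1}=\alpha_{m+2a+2}=0$ are the natural reason these few points escape. This is a finite check of at most six points per case, uniform in $m$ and $a$, and I expect it to be the main obstacle: it must be done carefully in the three regimes $a=1$, the generic regime, and $2a+1=m$. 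In the first and last regimes the progressions degenerate or wrap, and the boundary singletons $p-(s-1)$, $p+s$, $p+s+1$, $p-s$ interact with the edges of the intervals.

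\textbf{Conclusion.} Each complement has at most six points, and $6\le m-2$ for $m\ge8$. Hence
\[
|\mathcal F_p^{\rm short}|,\ |\mathcal F_p^{\rm long}|\ge m+2 .
\]
Lemma~\ref{lem:all-split-exclusion} then gives both the short target $s+1=m$ and the long target $2m-s+1=m+2$.
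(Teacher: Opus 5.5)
Your route is the paper's: tabulate $G_p$, $H_p$ and the admissible split parameters at the non-defect row $p=2a$. Then expand the unions of translates in \eqref{eq:all-split-short} and \eqref{eq:all-split-long} as sumsets of intervals and step-two progressions, read off the complements, and conclude from Lemma~\ref{lem:all-split-exclusion} since $2m-6\ge m+2$. The paper computes the four unions exactly, which gives both of your inclusions at once. Two points need fixing when you carry it out.

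First, the admissible $R$'s have no wrapping tail. For $1\le R\le m-2$ the vertex $2a-R$ lies in $[0,2a-1]\cup[m+2a+2,2m-1]$, and the second piece lies above $\max A_a=m+2a$. Hence $\{R:2a-R\in A_a\}=[2,2a-2]_2$, which is empty when $a=1$. Including the tail you describe would add spurious forbidden vertices and break your reverse inclusion.

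Second, your generic regime contains the even endpoint $m=2a+2$, which needs its own expansion. There the interval pieces $[2,m-2a]$ of the $D$-set and $[2a+2,m]$ of $H_p$ collapse to singletons, so the interval-plus-progression identities you rely on fail. In that subcase the row-first translates cover only one parity class and the $f$-first translates cover the other. The complements nonetheless come out exactly as displayed.
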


\begin{proof}
Put $s=m-1$ and $p=2a$.  Using the step-two interval notation from
Section~\ref{sec:prelim}, put
\[
 \mathcal D_a:=\{D\in[1,m-2]:p+D\in A_a\},
 \qquad
 \mathcal R_a:=\{R\in[1,m-2]:p-R\in A_a\}.
\]
The normal form \eqref{eq:j1-normal-form} and the definitions of
$G_p,H_p$ give the following complete list:
\begin{align}
 a=1:\quad
 &G_p=\{2,m+2\},\notag\\[-1mm]
 &H_p=[4,m]\cup\{m+2\};\notag\\[1mm]
 2\le a,\ 2a+1<m:\quad
 &G_p=[2,2a]_2\cup\{m+2a\},\notag\\[-1mm]
 &H_p=[2a+2,m]\cup[m+2,m+2a]_2;\notag\\[1mm]
 2a+1=m:\quad
 &G_p=[2,m-1]_2\cup\{2m-1\},\notag\\[-1mm]
 &H_p=[m+2,2m-1]_2.
 \label{eq:j1-GH-table}
\end{align}
and
\begin{align}
 a=1:\quad
 &\mathcal D_a=[2,m-2],\qquad \mathcal R_a=\varnothing;\notag\\[1mm]
 2\le a,\ 2a+1<m:\quad
 &\mathcal D_a=[2,m-2a]\cup[m-2a+2,m-2]_2,\notag\\[-1mm]
 &\mathcal R_a=[2,2a-2]_2;\notag\\[1mm]
 2a+1=m:\quad
 &\mathcal D_a=[3,m-2]_2,\qquad
 \mathcal R_a=[2,m-3]_2.
 \label{eq:j1-DR-table}
\end{align}
Equations~\eqref{eq:j1-GH-table} and~\eqref{eq:j1-DR-table} are
obtained by intersecting the three pieces of $A_a$ with the two cyclic
intervals in \eqref{eq:split-GH-definitions}; no cycle construction is
being suppressed.

Since $s=m-1$, separate the row-first and $f$-first pieces as
\begin{align*}
 \mathcal U_{D}^{\rm short}
 &:=\bigcup_{D\in\mathcal D_a}(G_p+D-m+1),&
 \mathcal U_{R}^{\rm short}
 &:=\bigcup_{R\in\mathcal R_a}(H_p+m-R-2),\\
 \mathcal U_{D}^{\rm long}
 &:=\bigcup_{D\in\mathcal D_a}(G_p+D-m),&
 \mathcal U_{R}^{\rm long}
 &:=\bigcup_{R\in\mathcal R_a}(H_p+m-R-1).
\end{align*}
Then
\begin{align*}
 \mathcal F_p^{\rm short}
 &=\{p+m,p-m+1\}
   \cup\mathcal U_D^{\rm short}\cup\mathcal U_R^{\rm short},\\
 \mathcal F_p^{\rm long}
 &=\{p-m+2,p+m-1\}
   \cup\mathcal U_D^{\rm long}\cup\mathcal U_R^{\rm long}.
\end{align*}
All sets below are written in the representatives $0,1,\ldots,2m-1$.
Substituting \eqref{eq:j1-GH-table} and \eqref{eq:j1-DR-table}, taking the
four elementary sumsets in each line, and reducing modulo $2m$ gives the
following complete expansions.

If $a=1$, then
\begin{align*}
 \mathcal U_D^{\rm short}
  &=[0,1]\cup[5,m+1]\cup[m+5,2m-1],&
 \mathcal U_R^{\rm short}&=\varnothing,\\
 \mathcal U_D^{\rm long}
  &=\{0\}\cup[4,m]\cup[m+4,2m-1],&
 \mathcal U_R^{\rm long}&=\varnothing.
\end{align*}

Suppose that $2\le a$ and $2a+2<m$.  Then
\begin{align*}
 \mathcal U_D^{\rm short}
  ={}&[0,1]\cup[3,2a-1]_2\cup[2a+3,m+1]\\
    &{}\cup\{m+3\}\cup[m+5,2m-1],\\
 \mathcal U_R^{\rm short}
  ={}&[0,2a-4]_2\cup[m+2,2m-4]\cup\{2m-2\},\\
 \mathcal U_D^{\rm long}
  ={}&[0,2a-2]_2\cup[2a+2,m]\cup\{m+2\}\\
    &{}\cup[m+4,2m-1],\\
 \mathcal U_R^{\rm long}
  ={}&[1,2a-3]_2\cup[m+3,2m-3]\cup\{2m-1\}.
\end{align*}

The remaining even endpoint of the same complement case is $m=2a+2$.
Here the ordinary interval part of $\mathcal D_a$ is a singleton, and the
expansions are
\begin{align*}
 \mathcal U_D^{\rm short}
  &=[1,m-3]_2\cup[m+1,2m-1]_2,\\
 \mathcal U_R^{\rm short}
  &=[0,m-6]_2\cup[m+2,2m-2]_2,\\
 \mathcal U_D^{\rm long}
  &=[0,m-4]_2\cup[m,2m-2]_2,\\
 \mathcal U_R^{\rm long}
  &=[1,m-5]_2\cup[m+3,2m-1]_2.
\end{align*}

Finally, if $m=2a+1$, then
\begin{align*}
 \mathcal U_D^{\rm short}
  &=[1,m-2]_2\cup\{m+3\}\cup[m+5,2m-1],\\
 \mathcal U_R^{\rm short}
  &=[0,m-5]_2\cup[m+3,2m-2]_2,\\
 \mathcal U_D^{\rm long}
  &=[0,m-3]_2\cup\{m+2\}\cup[m+4,2m-2],\\
 \mathcal U_R^{\rm long}
  &=[1,m-4]_2\cup[m+4,2m-1]_2.
\end{align*}

For completeness, the following elementary identities are used only for
nonempty intervals.  If $u\le v$ and $u'\le v'$, then
\[
 [u,v]+[u',v']=[u+u',v+v'].
\]
If in addition $u\equiv v\pmod2$ and $u'\equiv v'\pmod2$, then
\[
 [u,v]_2+[u',v']_2=[u+u',v+v']_2.
\]
Finally, when $u\equiv v\pmod2$, $u\le v$, $u'\le v'$, and
$v'-u'\ge1$,
\[
        [u,v]_2+[u',v']=[u+u',v+v'].
\]
These identities, together with the decompositions of
$G_p,H_p,\mathcal D_a,\mathcal R_a$ in
\eqref{eq:j1-GH-table}--\eqref{eq:j1-DR-table}, yield the displayed
expansions.  For example, in the
strict interior case,
\[
 ([2,2a]_2+[2,m-2a])-m+1
   =[5-m,1]
\]
reduces to $[m+5,2m-1]\cup[0,1]$, and
\[
 \{m+2a\}+[2,m-2a]-m+1=[2a+3,m+1].
\]
The other terms are obtained in the same way, with their parity fixed by
the two step-$2$ progressions; the first and last terms are displayed
above.

Taking the unions and then adjoining the two boundary terms gives
\begin{align*}
 a=1:
 &\quad\Z_{2m}\setminus\mathcal F_p^{\rm short}
   =\{2,3,4,m+4\},\\
 &\quad\Z_{2m}\setminus\mathcal F_p^{\rm long}
   =\{1,2,3,m+2,m+3\};\\[1mm]
 2\le a,\ 2a+1<m:
 &\quad\Z_{2m}\setminus\mathcal F_p^{\rm short}
   =\{2a-2,2a,2a+1,2a+2\},\\
 &\quad\Z_{2m}\setminus\mathcal F_p^{\rm long}
   =\{2a-1,2a,2a+1,m+1\};\\[1mm]
 2a+1=m:
 &\quad\Z_{2m}\setminus\mathcal F_p^{\rm short}\\
 &\qquad=\{m-3,m-1,m\}\cup\{m+1,m+2,m+4\},\\
 &\quad\Z_{2m}\setminus\mathcal F_p^{\rm long}
   =\{m-2,m-1,m,m+1,m+3\}.
\end{align*}
These are exactly the identities in
\eqref{eq:j1-complement-identities}.  Every complement has size at most
six, so each forbidden set has at least $2m-6\ge m+2$ vertices.  Apply
Lemma~\ref{lem:all-split-exclusion}.
\end{proof}

It remains to remove the interval orbit.  Rotate it to
\[
        A=f=[0,m-2].
\]

\begin{lemma}[The interval orbit forces length $m$]
\label{lem:interval-m}
If $m\ge7$ and $f=[0,m-2]$, then length $m$ occurs.
\end{lemma}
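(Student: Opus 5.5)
I would argue by contradiction: assume length $m$ is absent and look for a row whose short forbidden set is too large. Here $s=m-1$ and the short target is $s+1=m$. By Lemma~\ref{lem:all-split-exclusion}, absence of length $m$ gives $e_p\cap\mathcal F_p^{\rm short}=\varnothing$ for every row $p$. The abstract mentions simplicity and a Hamiltonian-edge exchange. I therefore expect the counting bound not to reach $m+2$ for the interval $f=[0,m-2]$. Instead it should pin some $e_p$ down to exactly $m-1$ admissible vertices, and the contradiction will come from comparing that forced row with $f$ or with a neighbouring row.

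The first step is to compute the defect rows and their forbidden sets for $A=[0,m-2]$. The defects are $\mathfrak D=\Z_{2m}\setminus(A\cup(A+m-1))$. Since $A+m-1=[m-1,2m-3]$, this gives $\mathfrak D=\{2m-2,2m-1\}$.
\begin{itemize}
\item For the defect row $p=2m-1$ (or $p=2m-2$), I will list $G_p$, $H_p$, $\mathcal D$ and $\mathcal R$ explicitly, exactly as in the table of Lemma~\ref{lem:j1-noninterval}.
\item All these sets are intervals, so the sumsets in (S) are intervals, and I expect the complement $\Z_{2m}\setminus\mathcal F_p^{\rm short}$ to be an interval of length about $m-1$.
\item If the complement has size at most $m-2$, we are done immediately.
\item Otherwise $e_p$ equals that complement, which I expect to be essentially an interval near $[m,2m-2]$ or a translate of $f$.
\end{itemize}

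The second step handles the case where a row is forced. Two outcomes are possible.
\begin{itemize}
\item If the forced row coincides with $f$, simplicity (distinctness of edges) gives the contradiction at once.
\item If the forced row differs from $f$, I use an exchange. Place $f$ on an adjacent pair $\{q,q+1\}\subseteq f$; this frees $e_q$ and gives a new Hamiltonian cycle. Then apply the chord lemma, or Proposition~\ref{prop:free-label} with $d+e=m+2$, using the freed interval label and the forced row as the two bridge labels.
\end{itemize}
A forced interval row $e_p$ realizes many distances, in particular a pair at distance $m-1$ relative to the new cycle. Lemma~\ref{lem:chord} then yields length $m$ once $e_p$ is unused.

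A second route is available if the defect rows alone do not force enough. Rows $p$ with $p,p+1\in f$ have large $G_p$ and $H_p$. Using Lemmas~\ref{lem:short-locks} and~\ref{lem:boundary-bypass} across several such rows, one can show that each $e_p$ is confined to an $O(1)$ neighbourhood of a fixed interval. Since the $e_p$ must be pairwise distinct and contain $\{p,p+1\}$, only a few distinct such sets exist, so the translation structure produces a contradiction.

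The main obstacle is the exact bookkeeping of the forbidden-set complement for the interval. The bound must land exactly at $m-1$ admissible vertices, which makes the row identifiable. After that, the exchange must be chosen so that the two bridge labels are distinct and lie in the freed intervals. I would first try a direct two-gap certificate: take $I$ of length $1$ at the pair covered by $f$ after the exchange, and $J$ of length $m+1$, so that $d+e=m+2$. Only if that fails would I fall back on the neighbouring-row comparison via Lemma~\ref{lem:reversal-lock}-style adjacent locks.
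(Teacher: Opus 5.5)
There is a genuine gap: the rows you plan to compute at force nothing, and your fallbacks are either unavailable or unjustified. At the defect rows the short forbidden sets are small.
\begin{itemize}
\item For $p=2m-1$ one has $G_p=\varnothing$ and no applicable $R$, so $\mathcal F_p^{\rm short}=\{m-1,m\}$.
\item For $p=2m-2$ one has $G_p=\{m-2\}$, $D\in[2,m-2]$, and again no applicable $R$. So $\mathcal F_p^{\rm short}=[1,m-1]$, and the $(m+1)$-set $\{0\}\cup[m,2m-1]$ stays admissible.
\end{itemize}
Your step ``otherwise $e_p$ equals that complement'' is valid only when the complement has exactly $m-1$ points, so no row is identified.

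The exchange branch cannot rescue this. To make $e_{2m-2}$ unused you would have to place $f$ on $\{2m-2,2m-1\}$, which $f=[0,m-2]$ does not contain. Lemma~\ref{lem:chord} needs the chord edge to be unused. Your second route looks at the right rows ($p,p+1\in f$), but it predicts only $O(1)$-confinement followed by a pigeonhole over distinct labels. As stated that does not close: an $(m-1+k)$-set has $\binom{m-1+k}{k}$ subsets of size $m-1$, far more than the $O(m)$ available rows once $k\ge2$.

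The missing step is the exact computation at a non-defect row; the paper does this with $p=1$.
\begin{itemize}
\item There $G_1=\{0,1\}$ and $H_1=[2,m-2]$.
\item The row-first parameters are $D=1,\dots,m-3$, and the only $f$-first parameter is $R=1$.
\item In \eqref{eq:all-split-short} the row-first union is $[m+2,2m-1]$, the $f$-first translate is $[m-1,2m-5]$, and the boundary terms are $m+1,m+2$.
\end{itemize}
Hence $\mathcal F_1^{\rm short}=[m-1,2m-1]$, and its complement is exactly $f$. Absence of length $m$ forces $e_1\subseteq f$, so $e_1=f$, contradicting $f\notin E(C)$. The same holds at every $p\in[1,m-3]$.

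This is precisely the outcome you anticipated in your opening paragraph: a forbidden set of size $m+1$ pinning a row to $f$, then simplicity. No exchange, chord argument, or pigeonhole is needed; you aimed that expectation at the defect rows, where it fails.
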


\begin{proof}
Assume length $m$ is absent and put $s=m-1$, $p=1$.  Here
\[
 G_1=\{0,1\},
 \qquad H_1=[2,m-2],
\]
while the applicable row-first parameters are
$D=1,2,\ldots,m-3$ and the only applicable $f$-first parameter is $R=1$.
In \eqref{eq:all-split-short}, the row-first union is
$[m+2,2m-1]$, the $f$-first union is $[m-1,2m-5]$, and the two boundary
terms are $m+1,m+2$.  Therefore
\[
        \mathcal F_1^{\rm short}=[m-1,2m-1].
\]
Absence of the target forces $e_1\subseteq[0,m-2]=f$.  Equality of their
sizes gives $e_1=f$, contrary to simplicity and to $f\notin E(C)$.
\end{proof}

\begin{lemma}[The interval orbit forces length $m+2$]
\label{lem:interval-mplus2}
If $m\ge8$ and $f=[0,m-2]$, then length $m+2$ occurs.
\end{lemma}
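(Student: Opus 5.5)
The plan is to argue as in Lemma~\ref{lem:interval-m}, but with the long target $2m-s+1=m+2$, where $s=m-1$. Assume length $m+2$ is absent. Lemma~\ref{lem:all-split-exclusion} then says every row $e_p$ avoids $\mathcal F_p^{\rm long}$, and the goal is to squeeze some $e_p$ onto $f$, which is impossible by simplicity.

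The first row to try is $p=1$. From \eqref{eq:split-GH-definitions}, $G_1=\{0,1\}$ and $H_1=[2,m-2]$. The applicable row-first parameters are $D=1,\ldots,m-3$, since $1+D\in f$. The only applicable $f$-first parameter is $R=1$: indeed $1-R\in f$ forces $R=1$ within $1\le R\le m-2$.

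Substituting into \eqref{eq:all-split-long} gives three contributions:
\[
 \bigcup_D(G_1+D-m)=[m+1,2m-2],
 \qquad H_1+m-2=[m,2m-4],
\]
together with the two boundary terms $m$ and $m+3$. Hence $\mathcal F_1^{\rm long}=[m,2m-2]$. This has only $m-1$ points, so, unlike the short case, it does not immediately force $e_1\subseteq f$. It only gives
\[
        e_1\subseteq[0,m-1]\cup\{2m-1\}.
\]
Since $|e_1|=m-1=|f|$ and $e_1\ne f$, the row $e_1$ must contain at least one of the two exceptional vertices $m-1$ and $2m-1$.

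The second step is to exclude each exceptional vertex by a direct construction of a Berge cycle of length $m+2$. Each construction uses $e_1$ on one jump, $f$ on another, and original Hamiltonian labels on the arcs. The tools are the boundary bypass (Lemma~\ref{lem:boundary-bypass}), the free-label two-gap certificate (Proposition~\ref{prop:free-label}), and the fact that the interval $f=[0,m-2]$ realizes every cyclic distance up to $m-2$. This leaves a great deal of freedom in choosing the $f$-chord.

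For $2m-1\in e_1$, the pair $\{2m-1,1\}\subseteq e_1$ is a distance-$2$ chord. I would combine it with an $f$-chord of distance $m-3$ (or nearby) so that the two bypassed arcs together remove exactly $m-2$ vertices. The cycle then has $2m-(m-2)=m+2$ vertices.

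For $m-1\in e_1$, the pair $\{1,m-1\}$ is a chord of length $m-2$ over the arc $1,\ldots,m-1$. I would pair it with an $f$-chord $\{0,x\}$ or $\{x,m-2\}$ placed so that the cycle walks the complementary long arc. The lengths of the two arcs are tuned so that exactly $m+2$ vertices are used. The check needed is that the arc label intervals avoid index $1$ and are disjoint.

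If the direct constructions for one of these vertices fail, the fallback is to bring in a second row. I would use $p=0$ or $p=2$, or the reflected row obtained from the symmetry $x\mapsto m-2-x$, which preserves $f$. I would also consider the neighbouring row $e_0$ or $e_2$, which contains $1$ or $2$ respectively. The reversal-type exchange from Lemma~\ref{lem:reversal-lock}, adapted to target $m+2$, may then pin down the remaining exceptional vertex.

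The main obstacle is this exceptional-vertex step. The forbidden set falls exactly two short of forcing $e_1\subseteq f$, so the argument needs one or two genuinely new explicit cycles for small $m\ge8$. These cycles must be checked carefully for label distinctness near the boundary indices $0$, $1$ and $m-2$.
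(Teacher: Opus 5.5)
Your computation $\mathcal F_1^{\rm long}=[m,2m-2]$ is correct, and you have located the real difficulty. However, the step you propose for it cannot succeed: no explicit cycle built from $e_1$, $f$, and Hamiltonian labels on their own adjacent pairs can exclude the exceptional vertices.

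Take $e_1=[1,m-1]$. It contains $\{1,2\}$ and $m-1$, and it lies in $[0,m-1]\cup\{2m-1\}$. The other rows are only known to contain their own pair. So a cycle of your type has at most two jumps (consecutive cycle vertices that are not adjacent on $C$), covered by $e_1$ and $f$, and every jump endpoint lies in $[0,m-1]$.
\begin{itemize}
\item If the cycle meets $[m,2m-1]$, that whole block lies in one arc running from some $a$ to some $b$ with $0\le b<a\le m-1$. This arc has $2m+1-(a-b)$ vertices.
\item With two jumps there is a second arc, so the cycle has at least $2m+2-(a-b)\ge m+3$ vertices.
\item With one jump, the cycle has $m+2$ vertices only if the jump is $\{0,m-1\}$, which lies in neither $e_1$ nor $f$.
\item A cycle avoiding $[m,2m-1]$ has at most $m$ vertices.
\end{itemize}
Your sketch for $m-1\in e_1$ illustrates this. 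The chord $\{1,m-1\}$ with the arc $m-1,m,\ldots,2m-1,0,1$ gives the chord-lemma length $m+3$. Since $f\subseteq[0,m-2]$ meets that arc only in $\{0,1\}$, no $f$-chord can shorten it. An analogous count shows that $[0,m-3]\cup\{2m-1\}$ and $[1,m-3]\cup\{m-1,2m-1\}$, both containing $2m-1$, also survive. So every argument confined to row $1$ fails, and your fallback to one or two further rows is left undeveloped.

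Two ideas are missing.
\begin{enumerate}
\item Whenever $p,p+1\in f$, placing $f$ on the pair $\{p,p+1\}$ gives a Hamiltonian cycle with $e_p$ unused. By the chord lemma, $e_p$ must then miss distance $m-1$. Inside $S=[0,m-1]\cup\{2m-1\}$ the only distance-$(m-1)$ pairs are $\{0,m-1\}$ and $\{m-2,2m-1\}$. Hence an admissible $(m-1)$-subset of $S$ is either $f$ or one of the three sets above.
\item The computation $\mathcal F_p^{\rm long}=[m,2m-2]$ is not special to $p=1$: it holds for every $1\le p\le m-4$, with $G_p=[0,p]$, $H_p=[p+1,m-2]$, $D\le m-2-p$, $R\le p$. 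So each of the $m-4\ge4$ labels $e_1,\ldots,e_{m-4}$ is one of three sets. Simplicity makes these labels pairwise distinct and distinct from $f$, which is a contradiction.
\end{enumerate}
The contradiction is a pigeonhole over many rows, not an obstruction inside one row. With only three candidate sets, two rows cannot be forced to clash without some additional cross-row construction, and you do not supply one.
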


\begin{proof}
Assume length $m+2$ is absent.  Fix $1\le p\le m-4$ and put $s=m-1$.
For the interval edge $A=[0,m-2]$ one has
\[
 G_p=[0,p],
 \qquad H_p=[p+1,m-2],
\]
with applicable parameters
\[
        D=1,\ldots,m-2-p,
        \qquad R=1,\ldots,p.
\]
In \eqref{eq:all-split-long}, the row-first translates cover
$[m+1,2m-2]$, the $f$-first translates cover $[m,2m-4]$, and both boundary
terms already lie in their union.  Hence
\[
        \mathcal F_p^{\rm long}=[m,2m-2],
        \qquad
        \Z_{2m}\setminus\mathcal F_p^{\rm long}
        =[0,m-1]\cup\{2m-1\}=:S.
\]
Thus $e_p\subseteq S$.

Because $p,p+1\in f$, replace $e_p$ by $f$ on the adjacent Hamiltonian pair
$\{p,p+1\}$.  This gives another Hamiltonian Berge cycle with $e_p$ unused.
If $e_p$ contained a pair at cyclic distance $m-1$, the chord lemma applied
to this new Hamiltonian cycle would give length $m+2$.  Therefore $e_p$ is
distance-$(m-1)$-free.

The only distance-$(m-1)$ pairs inside the $(m+1)$-set $S$ are
\[
        \{0,m-1\},
        \qquad
        \{m-2,2m-1\}.
\]
An $(m-1)$-subset of $S$ omits two vertices and must hit both pairs with its
omitted set.  There are four possibilities; one is $f$, and the other three
are
\[
\begin{aligned}
 E_1&=[0,m-3]\cup\{2m-1\},\\
 E_2&=[1,m-1],\\
 E_3&=[1,m-3]\cup\{m-1,2m-1\}.
\end{aligned}
\]
The $m-4$ labels $e_p$, $1\le p\le m-4$, are mutually distinct and are all
distinct from $f$.  For $m\ge8$ there are at least four such labels but only
three available sets, contradicting simplicity.
\end{proof}

\begin{corollary}[The two $j=1$ middle lengths]
\label{cor:j1-middle-lengths}
For every $m\ge8$, if the extra edge misses distance $m-1$, then both
lengths $m$ and $m+2$ occur.
\end{corollary}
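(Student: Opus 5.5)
The plan is to reduce to the two normal forms of Lemma~\ref{lem:j1-normal-form} and dispatch each with the lemmas already proved. Put $s=m-1$ and $A=f$. By hypothesis $A\cap(A+m-1)=\varnothing$, and $|A|=m-1$.

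First I would apply the dihedral change of coordinates described at the start of this subsection. It carries the whole labeled setup, including the Hamiltonian labels $e_p$, the extra edge $f$, and all incidences and distinctness, to an equivalent setup. Berge cycle lengths are invariant under it. By Lemma~\ref{lem:j1-normal-form} we may then assume $f=A_a$ for some $0\le a\le\lfloor(m-1)/2\rfloor$.

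Next I would split on $a$.

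\emph{Case $a\ge1$.} Since $m\ge8$, Lemma~\ref{lem:j1-noninterval} applies directly at the defect-adjacent row $p=2a$. There both forbidden sets have at least $2m-6\ge m+2$ points. Lemma~\ref{lem:all-split-exclusion} then supplies the short target $s+1=m$ and the long target $2m-s+1=m+2$.

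\emph{Case $a=0$.} Here $f$ is a cyclic interval of length $m-1$, which a further rotation brings to $f=[0,m-2]$. Lemma~\ref{lem:interval-m} (valid for $m\ge7$) gives length $m$, and Lemma~\ref{lem:interval-mplus2} (valid for $m\ge8$) gives length $m+2$.

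The only point needing care, and the step I expect to be the main obstacle, is checking that the coordinate change is legitimate. Specifically, the rotation used to place $f$ in normal form must also relabel the row indices consistently: $e'_{p+t}=\phi(e_p)$ for rotations, and $e'_{c-p-1}=\phi(e_p)$ for reflections. This keeps the relabeled system a Hamiltonian Berge cycle with $f'\notin E(C')$ and leaves the edge set unchanged up to the bijection $\phi$. Both Lemma~\ref{lem:j1-noninterval} and the interval lemmas are stated for such a normalized setup, so once this is verified the corollary follows by combining the two cases.
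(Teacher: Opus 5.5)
Your proposal is correct and matches the paper's proof: you normalize via the dihedral change of coordinates applied to the whole labeled setup (with $e'_{p+t}=\phi(e_p)$ for rotations and $e'_{c-p-1}=\phi(e_p)$ for reflections). You then handle the noninterval states $a\ge1$ with Lemma~\ref{lem:j1-noninterval} and the interval orbit $f=[0,m-2]$ with Lemmas~\ref{lem:interval-m} and~\ref{lem:interval-mplus2}, exactly as the paper does.
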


\begin{proof}
Apply the normalizing dihedral automorphism to the entire labeled setup,
transporting Hamiltonian labels by $e'_{p+t}=\phi(e_p)$ under a rotation
$\phi(x)=x+t$ and by $e'_{c-p-1}=\phi(e_p)$ under a reflection
$\phi(x)=c-x$.  Noninterval states are covered by
Lemma~\ref{lem:j1-noninterval}; the interval state is covered by
Lemmas~\ref{lem:interval-m} and~\ref{lem:interval-mplus2}.
\end{proof}

\subsection{Completion of the near-central band}

\begin{theorem}[Six noncentral middle lengths]
\label{thm:six-middle-lengths}
In the even one-extra-edge setup, for every $m\ge50$ the six lengths
\[
        m-2,\quad m-1,\quad m,\quad m+2,\quad m+3,\quad m+4
\]
all occur.
\end{theorem}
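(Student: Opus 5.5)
The plan is to assemble the theorem from the three chord-dual pairs fixed at the start of Section~\ref{sec:six-middle}. Take $j\in\{1,2,3\}$ and $s=m-j$. The pair of lengths attached to this $j$ is $s+1=m-j+1$ and $2m-s+1=m+j+1$. For $j=1,2,3$ these pairs are $\{m,m+2\}$, $\{m-1,m+3\}$ and $\{m-2,m+4\}$, and together they give exactly the six lengths in the statement. It therefore suffices to show that, for each $j$ and with $m\ge50$, both lengths of the pair occur.

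For a fixed $j$, I first split on whether $s\in\Delta(f)$.

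If $s\in\Delta(f)$, then $f$ contains a pair $\{v_i,v_{i+s}\}$. Since $f\notin E(C)$, the chord lemma (Lemma~\ref{lem:chord}) applied to $C$ gives both lengths $s+1$ and $2m-s+1$ directly.

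If $s\notin\Delta(f)$, then $A=f$ has size $m-1$ and $A\cap(A+s)=\varnothing$, so we are in the defect setting of Lemma~\ref{lem:defect-recurrence}.
\begin{itemize}
\item For $j\in\{2,3\}$, Corollary~\ref{cor:j23-middle-lengths} gives both associated lengths. It requires $m\ge50$, which is exactly the hypothesis here. (For $j=2$, Theorem~\ref{thm:defect-row-forcing} alone would already suffice from $m\ge38$.)
\item For $j=1$, Corollary~\ref{cor:j1-middle-lengths} gives $m$ and $m+2$ for every $m\ge8$, and $50\ge8$.
\end{itemize}

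Before concluding, I would check that each cited result actually applies in Setup~\ref{setup:even-outside} as assumed. Two points need confirming.
\begin{itemize}
\item $\mathfrak D_s(A)$ is nonempty (indeed $|\mathfrak D|=2$). This supplies the defect row $p$ that Theorem~\ref{thm:defect-row-forcing} and Lemma~\ref{lem:all-split-exclusion} need.
\item The dihedral renormalization in the $j=1$ case transports the whole labeled setup, so the lengths it finds are lengths in the original hypergraph. The paper already records this transport rule, so this is bookkeeping rather than new work.
\end{itemize}

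I expect no real obstacle. The only delicate point is matching each length to its parameter $j$ (for instance, $m+4$ comes from $j=3$, not $j=2$) and checking the threshold $m\ge50$ against every cited result. The substantive work already sits in Lemma~\ref{lem:run-bounds}, Lemma~\ref{lem:reflection-block} and the $j=1$ normal form.
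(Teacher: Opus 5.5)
Your proposal is correct and is essentially the paper's proof. For each $j\in\{1,2,3\}$ with $s=m-j$, the case $s\in\Delta(f)$ is handled by the chord lemma, and the case $s\notin\Delta(f)$ by Corollary~\ref{cor:j1-middle-lengths} for $j=1$ and Corollary~\ref{cor:j23-middle-lengths} for $j=2,3$. Your additional checks (the pairing $\{m-j+1,\,m+j+1\}$, the thresholds, and the existence of a defect row from $|\mathfrak D|=2$) are exactly what the paper's short argument leaves implicit.
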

\begin{proof}
For $j\in\{1,2,3\}$ put $s=m-j$.  If $s\in\Delta(f)$, the chord lemma gives
both associated lengths $s+1=m-j+1$ and
$2m-s+1=m+j+1$.  If $s\notin\Delta(f)$, apply
Corollary~\ref{cor:j1-middle-lengths} for $j=1$ and
Corollary~\ref{cor:j23-middle-lengths} for $j=2,3$.
\end{proof}

\section{Completion of the proof}\label{sec:assembly}

\begin{proof}[Proof of Theorem~\ref{thm:main-even}]
Take $m$ sufficiently large for Theorems~\ref{thm:outside-middle-band},~\ref{thm:central-middle-length}, and~\ref{thm:six-middle-lengths}.  The first of
these gives the endpoint lengths and every length outside the seven-element
middle band.  Theorem~\ref{thm:central-middle-length} gives $m+1$, and
Theorem~\ref{thm:six-middle-lengths} gives the other six middle lengths.
Thus $E(C)\cup\{f\}$ contains a Berge cycle of every length
$2,3,\ldots,2m$.
\end{proof}

\begin{proof}[Proof of Theorem~\ref{thm:main}]
For odd $n$, Corollary~\ref{cor:odd-one-extra} applies.  For even $n$, write
$n=2m$ and apply Theorem~\ref{thm:main-even}.  Taking $n_0$ large enough for
the even theorem proves the result in both parities.
\end{proof}

\section{Concluding remarks}

The contrast between the two parity classes is structural.  In odd order,
a half-size set misses at most one cyclic distance, and an alternating
matching exchange can prescribe the distance realized by an unused edge.
In even order, an $(m-1)$-set may miss every odd distance.  The two-gap
construction replaces the one-chord exchange, while the central
obstruction is converted into a reflected additive model whose bridge
supply is controlled by sum-free stability.

Theorem~\ref{thm:main} establishes the asymptotic statement asked by
Bailey, Hollars, Li and Luo.  Determining the smallest even order for which
the conclusion always holds remains a separate finite problem.

\end{document}